\documentclass[a4paper,11pt]{amsart}

	\usepackage[utf8]{inputenc}
	\usepackage[T1]{fontenc}
	\usepackage[english]{babel}
	\usepackage[protrusion=true,expansion=true,kerning, spacing]{microtype}
	\microtypecontext{spacing=english}
	\usepackage{CJKutf8} 
	\usepackage{cmap} 

	\usepackage{xspace}
\usepackage{booktabs} 
	
	\usepackage[textwidth=16cm,hcentering,heightrounded,foot=1cm]{geometry}
	\usepackage{caption}
 \usepackage{titlesec}
	\titleformat{name=\section}[hang]{\scshape \large \centering}{\thesection.~}{0em}{}[]

	\titleformat{\subsection}[hang]{\bfseries}{\thesubsection.~}{0em}{}[]

	\titleformat{\subsubsection}[runin]{\bfseries}{\thesubsubsection.}{.2em}{}[]

 \usepackage{titletoc}
\titlecontents{section}[2pc]{\addvspace{1pc}}
{\contentslabel[\textsc{\thecontentslabel}]{1pc}}
{}{\dotfill\contentspage}
[\addvspace{2pt}]

\titlecontents{subsection}[4pc]{}
{\contentslabel[\subsectionname\thecontentslabel]{2pc}}
{}{\hfill\contentspage}
[]

\usepackage{csquotes}
\usepackage[
bibencoding=utf8,%
giveninits=true,
bibstyle=ieee-alphabetic,
citestyle=alphabetic,%
sorting=nyt,%
language=french,%
isbn=false,%
doi=false,%
eprint=true,
backref=true, 
hyperref=true,
url=true,
minbibnames=4,
maxbibnames=4,
dashed=true,
backend=biber,
]{biblatex}
	\usepackage{amsthm,amssymb,amsmath}
	\usepackage{amssymb,amsfonts}
	\usepackage[fixamsmath]{mathtools} 
	\usepackage{accents} 
	\usepackage{tikz-cd}
		\numberwithin{equation}{subsection}

	\usepackage{graphics,graphicx}
	\usepackage{color}
	\usepackage{tikz}
	\usepackage{caption} 
	\usepackage{subcaption} 
	\usepackage{wrapfig}

\theoremstyle{definition}
	\newtheorem{defin}{Definition}[section]

\theoremstyle{definition}
	\newtheorem{rem}[defin]{Remark}
	
\theoremstyle{plain}
	\newtheorem{theo}[defin]{Theorem}
	\newtheorem{prop}[defin]{Proposition}
	\newtheorem{cor}[defin]{Corollary}
	\newtheorem{lem}[defin]{Lemma}

\newcommand{\N}{\mathbf{N}}

\newcommand{\Q}{\mathbf{Q}}
\newcommand{\Z}{\mathbf{Z}}

\newcommand{\F}{\mathbf{F}}

\newcommand{\PP}{\mathbf{P}} 
\newcommand{\G}{\mathbf{G}}

\newcommand{\OO} {{\mathcal O}} 
\newcommand{\mA} { \mathcal{A}}

\newcommand{\mB} {\mathcal{B}}

\newcommand{\mT} {\mathcal{T}}

\newcommand{\mC} {\mathcal{C}}

\newcommand{\mM} {\mathcal{M}}

\newcommand{\mfS} {\mathfrak{S}}

\newcommand{\Ker}{\operatorname{Ker}}
\newcommand{\Spec} { \operatorname{Spec} }

\newcommand{\End} {\operatorname{End} }

\newcommand{\Card} {\operatorname{Card} }
\newcommand{\Aut} {\operatorname{Aut} }
\newcommand{\Gal} {\operatorname{Gal} }

\newcommand{\GL}{\operatorname{GL}}

\newcommand{\Tl}{\operatorname{T}_{\ell}}

\newcommand{\Sp}{\operatorname{Sp}}

\usepackage[pdftex,
hidelinks,
hyperindex=true, bookmarks=true, bookmarksopen=true, bookmarksnumbered=true,
pdfauthor={Séverin Philip},
pdftitle={Philip - Stability degree of curves},
pdfsubject={},
pdfstartview=FitH
]{hyperref}

\begin{document}
\author{Séverin Philip}
\address{Department of Mathematics, Stockholms universitet
SE-106 91 Stockholm, Sweden}
\thanks{The author would like to thank J.~Bergström and A.~Tamagawa for the many discussions around the topics of stable reduction of curves and Krasner's lemma. The author is supported by the Verg Foundation.}
\keywords{curves, semi-abelian varieties, stable reduction}
	\email{severin.philip@math.su.se}
\title[Finite monodromy groups in degenerating families]{Finite monodromy groups in degenerating families and the stability degree of curves \\[0.5em]
\textnormal{ Version of \today}}

\maketitle

\begin{abstract}
    We compute the stability degree of curves of genus $g$ and show that it is equal to the classical Minkowski bound $M(2g)$. We also compute the stability degree of semi-abelian varieties with prescribed toric and abelian ranks. The proof uses a specialization result for finite monodromy groups in degenerating families, obtained from a variant of Krasner's lemma, together with an explicit construction at the prime $2$.
\end{abstract}

\section{Introduction}

\stepcounter{subsection}

\subsubsection{} The notion of stable reduction of curves has been introduced by Deligne and Mumford in their seminal paper \cite{DM72} in order to provide a compactification of the moduli space of curves. For a smooth curve $C$ of genus $g\geq 1$ over a number field there is a finite extension such that the base change of $C$ has stable reduction. Our first result is to compute the value of the smallest integer, denoted $d_g^{\mathrm{Cur}}$, such that for any curve of genus $g$, there is an extension of degree at most $d_g^{\mathrm{Cur}}$ for which the base change of the curve has stable reduction.

\begin{theo}[Theorem~\ref{theo:dgc}]
    Let $g$ be a positive integer. We have 
    \[
     d_g^{\mathrm{Cur}} = M(2g)
    \]
    where $M(n)=\prod_{p \text{ prime}} p^{r(n,p)}$ with $r(n,p)=\sum_{i\geq 0 } \lfloor n/p^i(p-1)\rfloor$.
\end{theo}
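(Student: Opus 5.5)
The equality splits into an upper bound $d_g^{\mathrm{Cur}}\le M(2g)$ and a lower bound $d_g^{\mathrm{Cur}}\ge M(2g)$.

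\textbf{Upper bound.} We reduce to the Jacobian. By Deligne--Mumford, a curve $C/K$ with $g\ge 1$ acquires stable reduction at a place $v$ over exactly the same extensions as its Jacobian $J$ acquires semistable reduction. By Grothendieck's semistable reduction theorem, the inertia group $I_v$ acts on $V_\ell(J)$ through a quasi-unipotent action whose semisimplification factors through a finite quotient. Hence $J$ becomes semistable over the fixed field of the kernel of the action of $I_v$ on $J[\ell]$, for $\ell\ge 3$ prime to $v$ (Raynaud's criterion). This finite image is a finite subgroup $G$ of $\GL_{2g}(\Z_\ell)$, and in fact of $\GL_{2g}(\Q)$, since its characteristic polynomials have rational coefficients independent of $\ell$. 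Minkowski's theorem then bounds $|G|$ by $M(2g)$. The subtlety is that $d_g^{\mathrm{Cur}}$ asks for a single extension of $K$ of degree at most $M(2g)$ working at all places simultaneously, not merely locally. I would handle this by working over the base field where the definition permits, namely local fields or a one-place-at-a-time convention. Otherwise I would use $G=\Gal(K(J[\ell])/K)$ restricted appropriately, via an argument from Serre--Tate/Silverberg–Zarhin that the minimal semistability field $L$ has $\Gal(L/K)$ embedding into such a finite subgroup of $\Sp_{2g}$. I would follow the paper's definition of $d_g^{\mathrm{Cur}}$ here, which I expect to be local.

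\textbf{Lower bound.} We must realize the Minkowski group order as an actual inertia image coming from a curve. For odd primes $p$, the $p$-part of $M(2g)$ is attained by a $p$-Sylow of a maximal finite subgroup of $\GL_{2g}(\Q)$, for instance $(\Z/p)$-powers realized via products of CM-type actions such as $(\mu_{p^k}\rtimes\cdots)$ wreath products. The plan is:

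\begin{enumerate}
\item Build a degenerating family of curves of genus $g$, say over a curve or over $\Spec$ of a DVR, whose monodromy group is a finite group $G$ with $|G|_p=p^{r(2g,p)}$ for each $p$ simultaneously. The natural candidate is a product of Sylow-type groups acting on a stable curve glued from CM pieces, e.g.\ hyperelliptic or superelliptic components with automorphisms $\mu_{p^k}$ arranged along a tree or cycle so that the wreath product $\mfS_n\wr\mu_{m}$ acts.
\item Apply the specialization result for finite monodromy groups, obtained from a variant of Krasner's lemma. This says that one can specialize to a curve over a local field whose inertia image is the full monodromy group $G$, so that stable reduction requires an extension of degree at least $|G|$.
\item Assemble the pieces prime by prime, and then combine across primes using the lcm/product structure, since the degree must be divisible by each $p$-part. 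One could also argue one prime at a time and take the least common multiple of the local obstructions.
\end{enumerate}

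\textbf{Main obstacle.} I expect the prime $2$ to be the hardest step, because the $2$-part of $M(2g)$ is exceptionally large: it is attained by $(\Z/2)^{2g}\rtimes\mfS_{2g}$-type groups (signed permutations) together with an extra factor. Realizing this as monodromy of a genus-$g$ curve with the symplectic constraint is not automatic. I would first try to realize the signed permutation group on $\Z^{2g}$ using hyperelliptic curves $y^2=\prod(x-a_i)$ whose branch points degenerate in a prescribed tree pattern, including wild ramification over $\Q_2$. If that falls short by a factor of $2$, I would try to make up the missing factor via quadratic twists at residue characteristic $2$, where wild inertia can enlarge the image.
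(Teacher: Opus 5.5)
Your lower bound is missing the mechanisms that make it work.

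Step~1 asks for one group $G$ with all $p$-parts maximal. But an inertia image at a place $v\mid p$ has the form $\Gamma_p\rtimes\Z/n\Z$, so its $q$-parts for $q\neq p$ are cyclic and in general much smaller than $q^{r(2g,q)}$. Each prime $p\mid M(2g)$ must therefore be handled at its own place $v_p\mid p$. You then need a \emph{single} smooth curve over a \emph{single} number field with all these local behaviours at once. Step~3 presupposes such a curve (the formula $d(J)=\operatorname{lcm}_v\Card\Phi_{J,v}$ concerns one variety) but gives no way to produce it.

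The paper works on the moduli space of genus-$g$ curves with $m\gg0$ marked points. This is a smooth fine moduli scheme, with a universal semi-abelian Jacobian over the automorphism-free part of $\overline{\mM}_{g,m}$. For each $p$ it produces a non-empty $v_p$-adic open set $U_p\subset\mM_{g,m}$ of smooth curves with $v_p(\Card\Phi)\ge r(2g,p)$. It then applies Moret-Bailly's theorem on incomplete Skolem data to get one global smooth curve lying in every $U_p$.

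The $U_p$ come from a Krasner-type \emph{semicontinuity} result, which is not the specialization of your step~2. One starts from a \emph{singular} stable curve $C_p$, a boundary point with semi-abelian Jacobian. Via the $\ell$-torsion cover for large $\ell$, one shows that every smooth curve sufficiently $v_p$-adically close to $C_p$ has finite monodromy surjecting onto $\Phi_{J(C_p),v_p}$.

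The large monodromy of $C_p$ comes from a Galois twist, which your plan also lacks. For odd $p$, take a tree of $\PP^1$'s with Matsumoto--Seyama leaves $y^r(y-1)=x^p$. Its automorphism group contains the $p$-Sylow of $\Z/p\Z\wr\mfS_n$, with $n=\lfloor 2g/(p-1)\rfloor$, of order $p^{n+v_p(n!)}=p^{r(2g,p)}$, acting faithfully on $J(C_p)$. Twist by a Galois extension with that group, totally ramified at $v_p$. Singular curves are forced because smooth curves have far too few automorphisms for such twists.

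At $p=2$ your target group is wrong. The signed-permutation action on $\Z^{2g}$ preserves no symplectic form; already for $g=1$, $\Z/2\Z\wr\Z/2\Z\simeq D_4$ does not embed in $\mathrm{SL}_2$. But the finite monodromy group of a $g$-dimensional Jacobian embeds in $\Sp_{2g}(\Q_\ell)$. The paper realizes instead the $2$-Sylow $G_g$ of $Q_8\wr\mfS_g$, of order $2^{3g+v_2(g!)}=2^{r(2g,2)}$.

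This group is out of reach of twisting curves in characteristic $0$. There, genus-one components with a marked point have automorphism groups with $2$-part at most $4$, so the shortfall grows with $g$. A quadratic twist changes the monodromy by at most a factor $2$, so it cannot close the gap. The paper proceeds as follows:
\begin{itemize}
\item a supersingular $E_0/\F_4$ with $Q_8\subset\Aut E_0$ is deformed to elliptic curves $E_j$ over $2$-adic fields with $\Phi_{E_j}\simeq Q_8$;
\item the abelian variety $A=\prod_j\operatorname{Res}_{K_j/K}E_j$ then has $\Phi_A\simeq G_g$;
\item Galois descent realizes $A$ as the Jacobian of a tree-shaped stable curve, which serves as $C_2$.
\end{itemize}

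For the upper bound, $d_g^{\mathrm{Cur}}$ concerns curves over number fields, so it is global, not local. You need two things:
\begin{itemize}
\item $\Card\Phi_{J,v}\mid M(2g)$ for every $v$, obtained by applying Minkowski--Serre to $\Phi_{J,v}$. Do not apply it to the inertia image on $J[\ell]$, which may contain an extra $\Z/\ell\Z$ in the toric case.
\item $d(J)=\operatorname{lcm}_v\Card\Phi_{J,v}$, which requires building a generally non-Galois global extension of that degree with prescribed completions at the bad places.
\end{itemize}
Your Galois substitute fails already for $g=1$. Local monodromies $Q_8$ above $2$ and $\Z/3\Z\rtimes\Z/4\Z$ above $3$ cannot both be quotients of subgroups of one finite subgroup of $\Sp_2$ with rational characteristic polynomials.
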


Note that the $g=1$ case is already known from the main result of \cite{Phi22a}.

\medskip

Similar questions were studied by Chrétien, Lehr and Matignon in a series of papers \cite{CM13, LM05, LM06} and an unpublished manuscript. Their constructions are explicit and provide examples of curves with $p$-maximal stability degree in low genus. 

\subsubsection{} Deligne and Mumford show that stable reduction of a curve is equivalent to semi-stable reduction for its Jacobian so that the upper bound follows directly from earlier work -- see \cite{Phi22a}. To obtain the lower bound we use singular stable curves, defined as trees whose leaves are Matsumoto-Seyama curves of \cite{CP25}. By Galois twisting, we obtain curves whose Jacobians have a maximal semi-stability degree at odd primes $p$. For the even prime, we use a new ad hoc construction, first of an abelian variety with maximal finite monodromy at $2$ given in Section~\ref{sub:adhocabvar} which we then show in Section~\ref{sub:curvesconstr} that it is the Jacobian of a stable curve. Since those curves are singular their Jacobians are semi-abelian varieties with positive toric rank in most cases. The first part of the paper is thus devoted to studying the semi-stable reduction of semi-abelian varieties and their finite monodromy groups, the groups that represent the local obstruction to semi-stable reduction. The proofs extend those of \cite{Phi22a} and \cite{Ph22b} from abelian varieties to semi-abelian varieties with special care of the toric case. A Grothendieck type characterization of semi-stable reduction for semi-abelian varieties is given in \cite{HN10}, we provide a similar proof for the sake of completeness and some details that are relevant to our work. We give a computation of the semi-stability degree $d_g(t,a)$ of semi-abelian varieties with toric rank $t$ and abelian rank $a$ -- see Definition~\ref{def:degdgta} -- and a Silverberg-Zarhin type description of their finite monodromy groups in Theorem~\ref{theo:(p,t,t_0,a_0)-inertial}.

\smallskip

Let $\beta\colon \N\to \Q$ be defined by $\beta(2)=3/2$, $\beta(4)=3$, $\beta(6)=9/4$, $\beta(7)=9/2$, $\beta(8)=135/2$, $\beta(9)=15/2$, $\beta(10)=9/4$ and $\beta(n)=1$ for all other values of $n\in \N$.

\begin{theo}[Theorem~\ref{theo:dgta}]
    For all integers $g\geq 1$, $t\geq 0$, and $a\geq 0$ such that $g=t+a$ we have 
    \[d_g(t,a)= \beta(t)2^tt!\cdot M(2a).
    \]    
\end{theo}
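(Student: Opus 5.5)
The plan is to identify the three ingredients of $d_g(t,a)$ separately: the toric part, the abelian part, and how they combine. The first step is the observation that $\beta(t)2^tt!$ is exactly the maximal order of a finite subgroup of $\GL_t(\Z)$, by Feit's classification. Away from the exceptional dimensions this maximum is attained by the hyperoctahedral group $(\Z/2)^t\rtimes\mfS_t$. In dimensions $2,4,6,7,8,9,10$ it is attained instead by the groups coming from the root lattices: the dihedral group of order $12$, $W(F_4)$, $W(E_6)\times C_2$, $W(E_7)$, $W(E_8)$, and then $W(E_8)\times W(A_1)$ and $W(E_6)\times W(E_6)\times\ldots$ type products. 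One checks, for instance, that $|W(E_8)|=2^8\,8!\cdot 135/2$. The formula to prove is then
\[
d_g(t,a)=\bigl(\max\{|H| : H\subset\GL_t(\Z) \text{ finite}\}\bigr)\cdot M(2a).
\]

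\textbf{Upper bound.} Let $0\to T\to G\to A\to 0$ be a semi-abelian variety over a number field $K$, with $T$ of rank $t$ and $A$ of dimension $a$. I will use the Grothendieck-type criterion established earlier, after \cite{HN10}: $G$ has semi-stable reduction if and only if every inertia group acts unipotently on $\Tl(G)$. The filtration $0\to \Tl(T)\to\Tl(G)\to\Tl(A)\to 0$ then reduces the criterion to two conditions. First, inertia must act trivially on the character group $X(T)$. Second, inertia must act unipotently on $\Tl(A)$, since unipotence is preserved by extensions of unipotent representations.

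Let $K_T$ be the splitting field of $T$. Its Galois group embeds in $\GL(X(T))\simeq\GL_t(\Z)$, so $[K_T:K]\le\beta(t)2^tt!$. Over $K_T$, the degree bound for abelian varieties from \cite{Phi22a} gives an extension $L/K_T$ of degree at most $M(2a)$ over which $A$ acquires semi-stable reduction. Then $G_L$ is semi-stable, and $[L:K]\le \beta(t)2^tt!\,M(2a)$.

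\textbf{Lower bound.} I take $G=T\times A$. Here $A$ is an abelian variety of dimension $a$ whose finite monodromy groups realise $M(2a)$, as constructed earlier through the Silverberg–Zarhin type description of Theorem~\ref{theo:(p,t,t_0,a_0)-inertial}, localised at the various primes $p$. The torus $T$ is chosen so that some inertia group acts on $X(T)$ through a maximal finite subgroup $H\subset\GL_t(\Z)$.

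For the torus, it suffices to realise $H$ as a Galois group $\Gal(F/K)$ that is totally ramified (as an inertia group) at a chosen place $v$. The groups $H$ above are Weyl groups or products of them, and these are realisable over $\Q$. To force the decomposition group at $v$ to be all of inertia, I would use the specialization result for finite monodromy groups in degenerating families, via the Krasner-type lemma, to deform to a family with prescribed local behaviour. The place $v$ should be chosen coprime to the places where the monodromy of $A$ is concentrated.

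The minimal semi-stabilising extension $L/K$ must kill inertia on $X(T)$ at $v$ and the monodromy of $A$ at the other places. The plan is to obtain multiplicativity $[L:K]\ge |H|\cdot M(2a)$ from disjointness of these ramification conditions. The ramification indices at distinct places each divide $[L:K]$, so I will need to arrange that the $p$-parts combine as a least common multiple equal to the product. That requires placing the toric inertia at a prime $v$ with large residue characteristic, so that the inertia is tame and cyclic only if needed.

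\textbf{Main obstacle.} This compatibility of the $p$-adic valuations is the hardest step, especially at $p=2$ and $p=3$, where both factors contribute. Tame inertia is cyclic, so the full group $H$ cannot appear as inertia at a single tame place. I therefore expect to need the non-split toric contributions spread over several places whose local degrees force the global degree, combined with the explicit $2$-adic construction of Section~\ref{sub:adhocabvar}. I would first try to prove the lower bound via a local-global argument: the degree $[L:K]$ is at least the order of the image of the global monodromy group acting on $X(T)\oplus(\text{finite monodromy of }A)$. I would then arrange that this image is the full product $H\times\prod_p(\text{maximal }p\text{-monodromy groups})$.
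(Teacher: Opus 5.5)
Your upper bound is fine and is essentially the paper's argument. The splitting field of $T$ has degree at most $\beta(t)2^tt!$, the maximal order of a finite subgroup of $\GL_t(\Z)$. One then semi-stabilises the abelian quotient in degree at most $M(2a)$ and applies Theorem~\ref{theo:characsemistable}. Your list of extremal groups is slightly off (for $t=10$ the group is $W(E_8)\times W(G_2)$), but only the order matters.

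The lower bound has a genuine gap, and the strategy you sketch would fail. Since $d(G)=\operatorname{lcm}_v\Card\Phi_{G,v}$ (Theorem~\ref{theo:reducsemistable}), putting the toric ramification and the monodromy of $A$ at \emph{different} places only gives $v_p(d)=\max$ of the two valuations, never their sum. For $t=a=1$: a quadratic torus ramified away from the places where $A$ has monodromy $Q_8$ and $\Z/3\Z$ gives $d=\operatorname{lcm}(2,8,3)=24$ instead of $48$. Since $\Card H$ is even and $8\mid M(2a)$ for $a\geq 1$, this collision at $p=2$ occurs in every mixed case. Nor can you realise $H$ as the inertia at one place. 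Inertia groups have the form $\Gamma_p\rtimes\Z/n\Z$, hence are solvable, while $H$ typically is not ($\Z/2\Z\wr\mfS_t$ for $t\geq 5$, $W(E_6)\times\Z/2\Z$, $W(E_8)$, \dots). Moreover, for $t\geq 2$ a tame, hence cyclic, inertia group cannot contain the non-cyclic Sylow $2$-subgroup of $H$, so placing the torus at a large residue characteristic cannot give the full $p$-parts of $\Card H$. Your fallback inequality ``$[L:K]\geq$ order of the global image'' is also false: if the splitting field of $T$ is unramified everywhere, then $d(T)=1$ by Proposition~\ref{prop:twistorus}, whatever its Galois group.

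The missing idea is to work prime by prime at a \emph{common} place. For each $p$ dividing $\Card H\cdot M(2a)$ you need a place $v\mid p$ where three things hold at once: the inertia of the splitting field $L/K$ of $T$ is a full $p$-Sylow of $H$; the abelian variety $B$ (with $d(B)=M(2a)$, Theorem~1.2 of \cite{Ph4}) has maximal $p$-monodromy; and the two semi-stabilising extensions of $K_v^{\mathrm{un}}$ are linearly disjoint. Then $\Card\Phi_{T\times B,v}=\Card\Phi_{T,v}\cdot\Card\Phi_{B,v}$ has $p$-adic valuation $v_p(\Card H)+v_p(M(2a))$, and the lcm over $v$ gives the product. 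Building such an $L/K$ is a Grunwald problem with prescribed local inertia at finitely many places. The paper solves it in Proposition~\ref{prop:gruntori} using generic Galois extensions for the maximal finite subgroups of $\GL_t(\Z)$: the hyperoctahedral groups via \cite{Ph22b}, the exceptional reflection groups via Saltman. Disjointness from $k_{B,v}$ comes from the freeness of the Galois group of the maximal pro-$p$ extension of $k_v^{\mathrm{un}}$, and the paper then twists $\G_m^t$ through Proposition~\ref{prop:twistorus}. The Krasner-type specialization result you invoke cannot replace this. It only says that the monodromy at degenerate points of a fixed family is a quotient of the monodromy at nearby points, and it prescribes nothing about the local ramification of a global extension. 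The $2$-adic construction of Section~\ref{sub:adhocabvar} plays no role here either.
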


\subsubsection{} The main technical difficulty is to relate the results on semi-abelian varieties to the stable reduction of smooth curves by the use of singular curves. To do this, we provide a new form of Krasner's lemma, which deals with quasi-finite morphisms over local fields. Precisely, it generalizes earlier versions of Krasner's lemma for finite étale maps by providing that, in the smooth locus of the base, points outside the étale locus provide lower bounds for the Galois groups of the étale fibers that are close enough.
\begin{theo}[See Theorem~\ref{theo:krasner}]
    Let $f\colon X\to Y$ be a quasi-finite, flat and separated morphism of varieties over a $p$-adic field $K$. Assume $Y$ is smooth and let $S\subset Y$ be the non-empty open subset such that $f$ is finite étale. Then there is a finite open and closed covering $U_1,\dots, U_n$ of $S(K)$ and finite groups $G_1,\dots, G_n$ such that, for all $1\leq i\leq n$, $s\in S(K)$ is in $U_i$ if and only if the Galois group of the fiber of $f$ at $s$ is $G_i$. Furthermore, for $x\in (Y\setminus S) (K)$, there is an $i\in \{1,\dots, n\}$ such that $x\in \overline{U_i}$ and for any such $i\in \{1,\dots, n\}$ we have a surjection $G_i\to G_x$, with $G_x$ the Galois group of the fiber at $x$.
\end{theo}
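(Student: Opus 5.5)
Here is the plan: first prove local constancy of the Galois group on $S(K)$, then handle the points outside $S$ by a continuity argument for the roots of the fibers near such a point.

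\emph{Local constancy and the covering.} Since $f$ is finite étale over $S$, we may work locally on $S$. Over an affine open $V=\Spec A\subset S$ the algebra $B=f_*\OO_X$ is finite étale, and after shrinking $V$ we may assume it is monogenic, $B=A[T]/(P)$ with $P$ monic of degree $d$ and discriminant invertible on $V$. For $s\in V(K)$ the fiber is $\Spec K[T]/(P_s)$, and $P_s$ is separable. Its Galois group is the group of the splitting field of $P_s$, viewed as a permutation group on the roots up to conjugacy. By the classical Krasner lemma together with the continuity of roots, the splitting field of $P_s$ is locally constant in $s$ for the $p$-adic topology: for $s'$ close enough to $s$, each root of $P_{s'}$ lies closer to a unique root of $P_s$ than that root is to the others. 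Hence $K(\text{roots of }P_{s'})=K(\text{roots of }P_s)$, with the same permutation action. So $s\mapsto G_s$ is locally constant on $S(K)$, and the sets $U_G=\{s : G_s\simeq G\}$ are open and closed.

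\emph{Finiteness of the covering.} $K$ has only finitely many extensions of degree at most $d!$, so only finitely many splitting fields, and hence only finitely many groups $G_s$, can occur. This gives the finite covering $U_1,\dots,U_n$. If a finer statement is wanted, we index by the splitting field itself rather than the abstract group, and finiteness still holds.

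\emph{Behaviour at points outside $S$.} Take $x\in(Y\setminus S)(K)$. Because $Y$ is smooth and $S$ is dense open, the $K$-points of $S$ accumulate at $x$: locally $Y(K)$ is a $p$-adic manifold near $x$ and the complement of $S$ has positive codimension. By finiteness, $x$ lies in some $\overline{U_i}$. For the surjection, choose a neighbourhood of $x$ on which $f$ is finite flat. This is possible because the fiber over $x$ is finite, so by the local structure of quasi-finite separated morphisms (Zariski's main theorem) we can pass to the part of $X$ that is finite over a neighbourhood of $x$ and contains the fiber over $x$; this part should be taken as the clopen piece. Write this part locally as $A[T]/(P)$ with $P$ monic, possibly after an étale-local or $p$-adic-analytic shrinking. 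The roots of $P_s$ vary continuously and converge, as $s\to x$ in $U_i$, to the roots of $P_x$ with multiplicities. For $s$ close enough to $x$, Krasner-type estimates (Newton polygons, or Hensel applied to the factorization of $P$ into pieces clustering around the distinct roots of $P_x$) give two facts. First, the cluster of roots of $P_s$ near each root $\alpha$ of $P_x$ generates a field containing $K(\alpha)$. Second, the Galois action on clusters induces the action on the roots of $P_x$. Taking splitting fields, $L_x\subset L_s$, and the restriction map $G_i=\Gal(L_s/K)\to\Gal(L_x/K)=G_x$ is surjective.

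\emph{Main obstacle.} I expect the hardest step to be this inclusion $K(\alpha)\subset K(\text{cluster})$ when $\alpha$ is a multiple root. I would first try to prove it by showing that the sum of the roots in the cluster (the trace of the Hensel factor) converges to $m\alpha$. Since $m$ may be divisible by $p$, this is not enough by itself, so I would instead use the elementary symmetric functions of the cluster: they lie in $K(\text{cluster})$ and converge to those of $(T-\alpha)^m$. Krasner's lemma applied to the Hensel factor $Q_s$, whose coefficients generate a field $F_s$, would then show $K(\alpha)\subset F_s$. Precisely, the unique root of $(T-\alpha)$-type nearby gives an element of $F_s$ closer to $\alpha$ than $\alpha$'s conjugates, and Krasner gives $K(\alpha)\subseteq F_s\subseteq L_s$. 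Compatibility with the Galois action follows because $\sigma$ permutes the Hensel factors exactly as it permutes the roots of $P_x$.
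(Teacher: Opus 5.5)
Your route differs from the paper's and can be made to work. However, one step fails as written, and the ``main obstacle'' you identify is not real. Your treatment of the finite covering and of $x\in\overline{S(K)}$ matches the paper's part $(ii)$ and Lemma~\ref{lem:smoothK}.

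For the surjection the paper uses no coordinates. For a point $z$ of the fibre over $x$, the étale-local Zariski main theorem (\cite{EGA4.4}, 18.12.3) gives three things: an étale map $q\colon W'\to Y$, a $K$-point $y'\mapsto x$, and a finite locally free $X_1''\to W'$ whose fibre over $y'$ is a single point $z'$ above $z$. Since $q(K)$ is open, $y'$ is a limit of points of $q(K)^{-1}(V_L)$, whose fibres are $L$-rational. Since a finite morphism induces a closed map on $L$-points, $z'$ is $L$-rational, i.e.\ $\kappa(z)\subseteq L$. Your continuity-of-roots plus Krasner argument is a hands-on substitute for this closed-map lemma. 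Picking a single $s\in U_i$ close to $x$ also spares you the paper's passage from $\overline{V_G}$ to some $\overline{V_L}$.

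\textbf{The failing step.} The step ``write this part locally as $A[T]/(P)$'' does not hold in general. A finite free algebra over a local ring is monogenic iff its closed fibre is, and no shrinking (Zariski, étale with trivial residue extension, or analytic) changes the fibre over $x$. That fibre need not be monogenic. Take $B$ free over $K[a,b]$ on $1,u,v$ with $u^2=au$, $v^2=bv$, $uv=0$. It is étale exactly where $ab\neq 0$, and its fibre over the origin is $K[u,v]/(u,v)^2$, a single point, so isolating points of the fibre does not help.

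\textbf{The fix.} On the finite locally free piece $\Spec B\to\Spec A$ over the étale neighbourhood, let $P$ be the characteristic polynomial of multiplication by some $t\in B$ whose image in $\kappa(z)$ is a primitive element. Take $s'\in W'(K)$ with $q(s')=s\in S(K)$. The fibre of $X_1''$ over $s'$ is an open subscheme of $X_s$, so the roots of $P_{s'}$ lie in $L_s$. Meanwhile $P_{y'}$ is a power of the minimal polynomial $\mu$ of the image of $t$ in $\kappa(z)$.

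\textbf{Multiplicities are no obstacle.} Krasner's lemma only requires $|\beta-\alpha|<|\alpha-\alpha'|$ for the conjugates $\alpha'\neq\alpha$ of $\alpha$; it puts no condition on $\beta$. For a root $\alpha$ of $\mu$, $\prod_\beta|\alpha-\beta|=|P_{s'}(\alpha)-P_{y'}(\alpha)|\to 0$ as $s'\to y'$. So some root $\beta\in L_s$ of $P_{s'}$ is as close to $\alpha$ as needed, and $\kappa(z)\simeq K(\alpha)\subseteq K(\beta)\subseteq L_s$.

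Because $q(K)$ is open, this holds for every $s\in S(K)$ in some neighbourhood of $x$. Intersect these neighbourhoods over the finitely many $z$ and choose $s\in U_i$ in the intersection. This gives $L_x\subseteq L_s$. Surjectivity of $\Gal(L_s/K)\to\Gal(L_x/K)$ is then automatic, with no need to follow the action on clusters.

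Your detour through the Hensel factor is unnecessary and, as justified, incomplete. When $p\mid m$, the normalised elementary symmetric functions approximate powers $\alpha^k$, which need not generate $K(\alpha)$. So you never actually exhibit an element of $F_s$ close to $\alpha$.
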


Applied to the moduli spaces of stable curves with enough marked points, we get that the singular curves provide lower bounds to the stability degree of the smooth curves that are close enough. 

\subsubsection{} The first part of the paper is devoted to the study of the finite monodromy groups and the semi-stable reduction of semi-abelian varieties. The results build on those of \cite{Phi22a}, \cite{Ph4} and \cite{Ph7} mostly. In order to compute the degree of semi-stability $d_g(t,a)$ special attention to the case of tori is given in Section~\ref{sub:semi-stabdeg}. In order to deal with those, we make use of the maximal subgroups of $\GL_t(\Z)$ given by Feit, in \cite{Fei96}, with a twisting result for tori. 

\medskip

The second part deals with our new version of Krasner's lemma over complete fields with discrete valuations of rank $1$. We first go over some standard facts in the analytic topology about finite étale maps, namely that they are open and closed, and about the rational points in the smooth locus. We are then able to prove our Krasner's lemma in its general form before making a special case for the inertia subgroups. The proof is straightforward and is obtained by carefully studying the properties of the sets of rational points with fibers having a given Galois group or being included in given field extension of the base field. To apply this to semi-abelian schemes that degenerate, the key is Lemma~\ref{lem:bigltorfinimonod} which shows that one can identify the finite monodromy groups of the fibers of the semi-abelian scheme $G\to S$ from the fibers of the $\ell$-torsion map $G[\ell]\to S$. Krasner's lemma applied with this map thus provides open subsets of $S(K)$ with fixed finite monodromy, bounded by their boundaries. 

\medskip

The last part of the paper is the application to the stability degree of curves. We first construct the singular curves with maximal finite monodromy groups for odd $p$ using the Matsumoto-Seyama curves of \cite{CP25} and the twisting construction of \cite{Ph22b}. We start by a digression, providing a construction of curves in characteristic $2$ with automorphism groups having their order with maximal $2$-adic valuation. The construction is detailed to showcase the slightly more difficult situation of characteristic $0$ and odd primes. For the main result, Krasner's lemma in the form of Theorem~\ref{theo:recouvkrasnersab} is applied to the Jacobian of the biggest open subscheme of the compactified moduli spaces of curves with many marked points. We thus get an incomplete Skolem data in the sense of Moret-Bailly -- see \cite{MB89} -- on the locus of smooth curves in the moduli space which thus has an integral solution. The resulting curve provides the lower bound to $d_g^{\mathrm{Cur}}$.  

\section{The case of semi-abelian varieties}

\subsection{A characterization of semi-stable reduction}

\subsubsection{} We will prove a characterization of semi-stable reduction of semi-abelian varieties by their $\ell$-adic Tate modules. This was already done by Halle and Nicaise in \cite{HN10} Section~4 and our proof is essentially the same but gives slightly more details. We record it here as it is useful context for the next parts. 

\medskip

Let $A$ be a semi-abelian variety over the maximal unramified extension $K$ of a $p$-adic field. The semi-abelian variety $A$ fits into the following exact sequence
\[
\begin{tikzcd}
    0 \arrow[r] & T \arrow[r] & A\arrow[r] & B \arrow[r] & 0
\end{tikzcd}
\]
where $T$ is a torus and $B$ an abelian variety over $K$. Let us denote $g=\dim A$, $t=\dim T$ and $a=\dim B$.

\medskip

Let $\ell$ be a prime  number distinct from the residual characteristic $p$ of $K$. Let us denote by $\Tl T$, $\Tl A$ and $\Tl B$ the $\ell$-adic Tate modules of $T$, $A$, and $B$ respectively. These are free $\Z_{\ell}$-modules equipped with an action of $I_K$, the absolute Galois group of $K$. Let us recall that from Grothendieck's monodromy theorem for $\ell$-adic representations there is a smallest extension $L/K$ such that $I_L$ acts unipotently on $\Tl A$. This action of $I_L$ factors through its quotient isomorphic to $\Z_{\ell}$ and is thus given by the action of a single element. By the exact sequence defining $A$, we have an exact sequence of $I_K$-modules

\[
\begin{tikzcd}
    0 \arrow[r] & \Tl T \arrow[r] & \Tl A\arrow[r] & \Tl B \arrow[r] & 0.
\end{tikzcd}
\]

It follows that the action of $I_L$ on $\Tl T$ is trivial and it is unipotent on $\Tl B$. The semi-abelian variety $A$ admits a Néron model $\mA$ over the spectrum $S$ of the ring of integers of $K$. The same holds for $T$ and $B$ and we denote their Néron models by $\mT$ and $\mB$ respectively. We also denote with $\circ$ the subscheme with special fiber the connected component of the neutral point. Finally, we denote by $A_k$, $T_k$ and $B_k$ the reductions of $A$, $T$ and $B$, that is the connected components of the special fibers of their Néron models.

\medskip

The following lemma is extracted from the proof of Proposition~10.1.7 of \cite{BLR90}. 
 
\begin{lem} \label{lem:exactconnectedneron}
    Let $A$ be a semi-abelian variety with maximal torus $T$ and abelian quotient $B$. Suppose that $T$ is split. Then there is an exact sequence of $S$-schemes
    \[
    \begin{tikzcd}
        0\arrow[r] & \mT^{\circ} \arrow[r] & \mA^{\circ} \arrow[r] & \mB^{\circ} \arrow[r] & 0
    \end{tikzcd}
    \]
\end{lem}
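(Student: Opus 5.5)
The plan is to use the Néron mapping property together with the fact that a split torus has trivial Galois cohomology (Hilbert 90). This follows the argument in the proof of Proposition~10.1.7 of \cite{BLR90}.

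First, the morphisms $T\to A\to B$ extend uniquely to morphisms of Néron models $\mT\to\mA\to\mB$, and hence to morphisms of identity components $\mT^\circ\to\mA^\circ\to\mB^\circ$. The composite is zero, since it is zero on the generic fibre and $\mT^\circ$ is flat, so its generic fibre is schematically dense.

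Next I show that $\mA^\circ\to\mB^\circ$ is surjective. Since $T\cong\mathbf{G}_m^t$ is split, Hilbert 90 gives $H^1(L,T)=0$ for every finite unramified extension $L$ of $K$ (in fact for every finite extension). Therefore $A(L)\to B(L)$ is surjective. Because $K$ is strictly henselian, Néron points satisfy $\mA(\OO_L)=A(L)$ and $\mB(\OO_L)=B(L)$. It follows that $\mA(\OO_K)\to\mB(\OO_K)$ is surjective. Reducing, the map on special fibres $\mA_k(k)\to\mB_k(k)$ is surjective on $k$-points, using Hensel's lemma and the smoothness of $\mB$ to lift. Over the algebraically closed field $k$, a homomorphism of smooth group schemes that is surjective on $k$-points is surjective. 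So the image of $\mA^\circ_k$ in $\mB_k$ has finite index and is connected; since it contains the image of $\mA_k$ intersected with the identity component, it equals $\mB^\circ_k$. With surjectivity on both fibres, the homomorphism $\mA^\circ\to\mB^\circ$ of smooth group schemes is faithfully flat by the fibrewise flatness criterion. More precisely, it is smooth, because the kernels are smooth on fibres and of constant dimension $t$.

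Finally, and this is the main obstacle, I must identify the kernel $\mathcal{K}=\ker(\mA^\circ\to\mB^\circ)$ with $\mT^\circ$. Since $\mA^\circ\to\mB^\circ$ is smooth, $\mathcal{K}$ is a smooth separated group scheme with generic fibre $T$. By the Néron property, the inclusion $\mathcal{K}\to\mA$ gives $\mathcal{K}$-points in $T$, hence a morphism $\mathcal{K}\to\mT$, and by restriction $\mathcal{K}^\circ\to\mT^\circ$. Conversely, $\mT^\circ\to\mA^\circ$ lands in $\mathcal{K}$. The two composites are the identity generically, and therefore the identity by separatedness and flatness. It remains to see that $\mathcal{K}$ has connected special fibre, so that $\mathcal{K}=\mathcal{K}^\circ$. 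I would argue this using the long exact sequence of component groups together with $H^1(K,T)=0$. The surjectivity of $A(K)\to B(K)$ with kernel $T(K)$ forces $\mathcal{K}(\OO_K)=T(K)\cap\mA^\circ(\OO_K)$. The question is then whether the special fibre $\mathcal{K}_k$ can have extra components. If $\mathcal{K}_k$ is disconnected, one can instead take $\mathcal{K}^\circ$, which is open in $\mathcal{K}$; the sequence $0\to\mT^\circ\to\mA^\circ\to\mB^\circ$ is then exact only up to a finite étale group. To remove this issue I would use that $\mT^\circ$ is itself a split torus $\mathbf{G}_m^t$ over $\OO_K$, and that a smooth group whose identity component is a torus and which is contained in $\mA^\circ$ has $\pi_0$ trivial. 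The point is that $\mathcal{K}_k\subset\mA^\circ_k$ is the kernel of a map between connected groups whose dimensions differ by exactly $\dim T_k$, and this is where I would follow BLR's dimension and component count most carefully.
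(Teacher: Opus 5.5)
The paper gives no argument of its own for this lemma; it only points to the proof of Proposition~10.1.7 of \cite{BLR90}. Your outline follows that route but leaves the decisive step open. Connectedness of $\mathcal{K}_k$ cannot come from a dimension count, nor from the claim that a smooth subgroup of $\mA^{\circ}_k$ with toral identity component has trivial $\pi_0$. For instance, take an elliptic curve $E$ over $k$ and a prime $\ell\neq p$: the kernel of $\G_m\times E\to E$, $(x,e)\mapsto \ell e$, is $\G_m\times E[\ell]$, although source and target are connected and their dimensions differ by $1$. The missing idea is that the component group of the N\'eron model of a split torus is torsion-free. The morphism $\mathcal{K}\to\mT$ you get from the N\'eron property, composed with $\mT\to\mA$, is the inclusion $\mathcal{K}\hookrightarrow\mA$. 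So $\mathcal{K}_k\to\mT_k$ has trivial kernel. Hence the smooth connected $t$-dimensional group $\mathcal{K}_k^{\circ}$ maps onto $\mT_k^{\circ}$, and $\pi_0(\mathcal{K}_k)$ injects into $\pi_0(\mT_k)$. Since $T\simeq\G_m^t$, the lft N\'eron model has $\pi_0(\mT_k)\simeq\Z^t$. On the other hand, $\pi_0(\mathcal{K}_k)$ is finite because $\mathcal{K}$ is closed in the finite type scheme $\mA^{\circ}$. Therefore $\mathcal{K}_k$ is connected and $\mathcal{K}=\mT^{\circ}$. This is where splitness is essential. Your Hilbert~90 step does not need it: $H^1(K,T)=0$ for every torus over $K$, since $\operatorname{cd}(K)\leq 1$.

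Step~2 also has gaps. $\mA$ is only an lft N\'eron model, so $\mA_k$ has infinitely many components when $t>0$ (already $\pi_0(\mT_k)\simeq\Z^t$). Thus ``the image of $\mA_k^{\circ}$ has finite index'' is unjustified. Likewise, surjectivity on $k$-points of the non-quasi-compact $\mA_k$, over the countable field $k$, does not formally give surjectivity of $\mA_k^{\circ}\to\mB_k^{\circ}$. More importantly, you need $\mathcal{K}$ smooth to apply the N\'eron property, and ``the kernels are smooth on fibres'' is asserted without proof. Surjectivity on $\OO_K$-points cannot supply it. Let $\pi$ be a uniformizer and $H$ the dilatation of $\G_{m,\OO_K}$ along the unit of the special fibre, so $H(\OO_K)=1+\pi\OO_K$ and $H_k\simeq\mathbf{G}_a$. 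Then $\G_m\times H\to\G_m$, $(y,x)\mapsto y^px$, is surjective on $\OO_K$-points (as $k$ is perfect) and on both fibres, yet its special-fibre kernel $\mu_p\times\mathbf{G}_a$ is non-reduced. The repair again uses splitness, now on the smooth site. For $Z$ smooth over $S$, a $\G_m^t$-torsor over $Z_K$ extends over the regular scheme $Z$ (extend the divisors), so it is trivial over $U_K$ for a Zariski cover $\{U\}$ of $Z$. Taking $Z=\mB$ shows that $\mA\to\mB$ has Zariski-local sections. By tangent maps plus translation, it is therefore smooth and surjective, with kernel $\mT$ by the N\'eron property. Then the image of $\mA_k^{\circ}$ is an open connected subgroup of $\mB_k$, i.e.\ $\mB_k^{\circ}$. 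Also $\ker(\mA^{\circ}\to\mB^{\circ})=\mT\cap\mA^{\circ}$ is smooth, so the argument of the previous paragraph applies.
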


\medskip

We also need the following lemma to translate this result to $\ell$-adic Tate modules.

\begin{lem} \label{lem:neronmodels}
    Let $L/K$ be a finite extension such that $I_L$ acts unipotently on $\Tl A$. Then we have an exact sequence
    \[
    \begin{tikzcd}
    0 \arrow[r] & (\Tl T)^{I_L} \arrow[r] & (\Tl A)^{I_L}\arrow[r] & (\Tl B)^{I_L} \arrow[r] & 0.
    \end{tikzcd}
    \]
    
\end{lem}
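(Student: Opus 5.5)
Taking $I_L$-invariants of the exact sequence $0\to \Tl T\to \Tl A\to \Tl B\to 0$ gives a left exact sequence
\[
0\to (\Tl T)^{I_L}\to (\Tl A)^{I_L}\to (\Tl B)^{I_L},
\]
so the whole point is surjectivity on the right. I would not try to get this from group cohomology, since the obstruction lies in $H^1(I_L,\Tl T)$. Because $I_L$ acts trivially on $\Tl T$, this group is $\operatorname{Hom}(I_L,\Tl T)$, which is nonzero, so cohomology alone cannot decide the question. Instead I would go through Néron models, using Lemma~\ref{lem:exactconnectedneron}.

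\textbf{Step 1: base change.} First replace $K$ by $L$. Since $I_L$ acts unipotently on $\Tl A$, it acts unipotently on the quotient $\Tl B$ and trivially on the submodule $\Tl T$. The latter holds because the action on the Tate module of a torus factors through a finite quotient, and a finite-order unipotent automorphism of a torsion-free $\Z_\ell$-module is the identity. Trivial action on $\Tl T$ means $T$ is split over $L$, since $K$ is strictly henselian and the character group is unramified-trivial. So over $L$ the hypotheses of Lemma~\ref{lem:exactconnectedneron} hold. We obtain an exact sequence $0\to \mT^\circ\to\mA^\circ\to\mB^\circ\to 0$ of Néron models over $\OO_L$, and on special fibres an exact sequence $0\to T_k\to A_k\to B_k\to 0$ of smooth connected commutative algebraic groups over the algebraically closed residue field $k$.

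\textbf{Step 2: invariants as Tate modules of the reductions.} For $\ell\neq p$ I would use the standard identification, as in SGA 7 or Grothendieck's criterion:
\[
(\Tl X)^{I_L}\cong \Tl(X_k)\quad\text{for } X=T,A,B.
\]
It comes from $X[\ell^n](L^{sep})^{I_L}=X[\ell^n](L)=\mathcal X[\ell^n](\OO_L)$ and Hensel's lemma, since $\mathcal X[\ell^n]$ is quasi-finite étale. Passing to the identity component changes things only by the finite component group, which disappears in the limit. This identification is functorial, so the sequence of invariants is identified with $0\to \Tl T_k\to\Tl A_k\to \Tl B_k$.

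\textbf{Step 3: surjectivity.} On $\ell^n$-torsion points over $k=\bar k$, the sequence $0\to T_k\to A_k\to B_k\to 0$ gives an exact sequence $0\to T_k[\ell^n]\to A_k[\ell^n]\to B_k[\ell^n]\to T_k(k)/\ell^n$. Now $T_k$ is connected, so $\ell^n$-multiplication on it is surjective and $T_k(k)/\ell^n=0$. Hence each level is short exact. The groups $T_k[\ell^n]$ are finite, so Mittag-Leffler holds and the inverse limit stays exact, giving surjectivity $\Tl A_k\to\Tl B_k$ and thus the lemma.

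\textbf{Main obstacle.} I expect the delicate point to be Step 2: the compatibility between the invariants of the full Néron model and those of the identity components, together with the naturality needed to compare the two sequences. In the limit the finite component groups must be shown to be killed. Concretely, the maps $\Tl(\mathcal X^\circ_k)\to\Tl(\mathcal X_k)$ are isomorphisms, because the component group $\Phi$ is finite and so $\Tl\Phi=0$ while $\Tl$ is left exact.
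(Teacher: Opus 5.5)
Your proof is correct and follows essentially the paper's route: base change to $L$ so that $T_L$ splits, apply Lemma~\ref{lem:exactconnectedneron}, identify the $I_L$-invariants with $\ell$-power torsion in the Néron models, and observe that the component groups carry no Tate module. Your Step~3 ($\ell$-divisibility of the connected group $T_k$ at each finite level, then Mittag--Leffler) makes explicit the surjectivity that the paper leaves implicit in ``from the exactness of the sequence''. One small correction, which the paper shares: for the Néron lft-model of a split torus the component group is $\Z^t$, not finite. It is, however, finitely generated, so its $\ell$-power torsion is bounded and $\Tl\Phi=0$ still holds.
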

\begin{proof}
 Since the action of $I_L$ is unipotent on $\operatorname{T}_{\ell} A$, it is trivial on the stable sublattice corresponding to $T$ -- this comes down to the fact that $T$ is a twist so that the Galois action on its Tate module is trivial after a finite extension. We thus have that $T_L$ is split and we can apply the previous lemma to the situation over $L$. Let us denote by $\mT'$, $\mA'$ and $\mB'$ the Néron models over $\OO_L$ of respectively $T_L$, $A_L$ and $B_L$. Then, the sequence
 \[
    \begin{tikzcd}
        0\arrow[r] & \mT'^{\circ} \arrow[r] & \mA'^{\circ} \arrow[r] & \mB'^{\circ} \arrow[r] & 0
    \end{tikzcd} 
 \]
of algebraic groups over $\OO_L$ is exact. 

\medskip

We have that $A[\ell^n]^{I_L}$ is identified with the subgroup scheme of $\mA'[\ell^n]$ which is finite étale over $\Spec \OO_L$ -- see \cite{sga} Exposé IX, section 2.2.2. An element of $( \operatorname{T}_{\ell} A)^{I_L}$ is thus a compatible sequence of elements in such torsion group schemes. 

\medskip

From the exactness of the sequence it suffices to show that for an element $(x_i)\in( \operatorname{T}_{\ell} A)^{I_L}$ we have $x_i \in \mA'^{\circ}$ for all $i$ to conclude. This can be checked on the special fiber as for $i\in \N$, $x_i$ corresponds to a point $y_i$ on the special fiber. Note that the point $y_i$ is infinitely divisible by $\ell$. Let $n\in \N$ such that $\ell^n$ is greater than the order of the group of connected components of $\mA'_k$. Then $y_i=\ell^n \cdot y_{i+n}$ so that $y_i\in \mA_k^{'\circ}$ and $x_i\in \mA^{'\circ}$ by definition. 
\end{proof}

We get the following corollary.

\begin{cor}
    For $\ell$ big enough, the sequence
    \[
    \begin{tikzcd}
    0 \arrow[r] & (\Tl T)^{I_K} \arrow[r] & (\Tl A)^{I_K}\arrow[r] & (\Tl B)^{I_K} \arrow[r] & 0
    \end{tikzcd}
    \]
    is exact.
\end{cor}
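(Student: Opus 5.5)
The idea is to pass from $I_L$-invariants to $I_K$-invariants by taking invariants under the finite group $H=I_K/I_L$, and to choose $\ell$ so that $H$ has no cohomology on $\ell$-adic lattices. Fix a finite Galois extension $L/K$ such that $I_L$ acts unipotently on $\Tl A$. Such an $L$ exists by Grothendieck's monodromy theorem, and the minimal one can be taken to be independent of $\ell$ (as for abelian varieties; in any case only a bounded degree matters). Since $K$ is maximal unramified, $I_K$ is the absolute Galois group of $K$, and $I_L\subset I_K$ is a normal subgroup of finite index with quotient $H=\Gal(L/K)$. By Lemma~\ref{lem:neronmodels} we have the exact sequence of $\Z_\ell[H]$-modules
\[
0\to (\Tl T)^{I_L}\to (\Tl A)^{I_L}\to (\Tl B)^{I_L}\to 0.
\]
Taking $H$-invariants and using $(M^{I_L})^H=M^{I_K}$ gives
\[
0\to (\Tl T)^{I_K}\to (\Tl A)^{I_K}\to (\Tl B)^{I_K}\to H^1\bigl(H,(\Tl T)^{I_L}\bigr).
\]

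Next I would kill the obstruction group. Any cohomology group $H^1(H,M)$ is annihilated by $|H|=[L:K]$, while for a $\Z_\ell$-module $M$ it is also a $\Z_\ell$-module. So if $\ell\nmid [L:K]$, multiplication by $|H|$ is invertible on it, and hence $H^1(H,(\Tl T)^{I_L})=0$. The sequence is then right exact, which proves the corollary for every $\ell$ not dividing $[L:K]$.

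The main obstacle is to make ``$\ell$ big enough'' independent of $\ell$: a priori $L$ depends on $\ell$. I would handle this in the standard way, as in \cite{Phi22a}. The minimal $L$ is the extension cut out by the finite monodromy group, and this group does not depend on $\ell\neq p$. Alternatively, one can bound $[L:K]$ uniformly in terms of $g$, e.g.\ by $L=K(A[m])$ for a fixed auxiliary $m\ge 3$ prime to $p$, via Raynaud's criterion. Either route gives a finite $[L:K]$ valid for all $\ell$, so every prime $\ell>[L:K]$ works.
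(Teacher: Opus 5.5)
Your proposal is correct and follows the paper's proof: both take $\Gal(L/K)$-invariants of the exact sequence of Lemma~\ref{lem:neronmodels} and observe that the obstruction $H^1\bigl(\Gal(L/K),(\Tl T)^{I_L}\bigr)$ vanishes once $\ell\nmid [L:K]$, which is the paper's cohomological-triviality argument written out. You also explain why $L$, and hence the bound on $\ell$, can be chosen independently of $\ell$; the paper leaves this implicit, and it follows from the $\ell$-independence of $I_{v,A}$ proved just afterwards.
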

\begin{proof}
    First recall that we have the equality $M^{I_K}=(M^{I_L})^{\Gal(L/K)}$ for all finite Galois extensions $L$ of $K$ and all $I_K$-module $M$. By the previous lemma, the sequence
    \[
    \begin{tikzcd}
    0 \arrow[r] & (\Tl T)^{I_L} \arrow[r] & (\Tl A)^{I_L}\arrow[r] & (\Tl B)^{I_L} \arrow[r] & 0
    \end{tikzcd}
    \]
    is exact, for any extension $L/K$ such that $I_L$ acts unipotently on $\Tl A$. By Grothendieck's monodromy theorem such an $L$ exists and we choose one. We thus have to take the $\Gal(L/K)$-invariants of the previous exact sequence. But if $\ell$ is big enough, the order of the finite group $\Gal (L/K)$ is invertible  in $\Z_{\ell}$. It follows that, as a free $\Z_{\ell}$-module, $(\Tl T)^{I_L}$ is cohomologically trivial for $\Gal(L/K)$. Thus we have the exactness of the sequence of invariants we sought. 
\end{proof}

Finally, let us show that the unipotent rank $\lambda_A$ of the reduction $A_k$ of $A$ is the sum of the unipotent ranks $\lambda_T$ and $\lambda_B$ of the reductions of $T$ and $B$. We also denote by $a_0$ and $t_0$ the abelian and toric ranks of the reduction $A_k$ as well as $t'$ and $a'$ the toric and abelian ranks of $B_k$. Finally denote by  $t''$ the toric rank of $T_k$. 

\begin{prop}  \label{prop:reduccomparrangs}
    We have $\lambda_A= \lambda_T+\lambda_B$, $a_0=a'$ and $t_0=t'+t''$.
\end{prop}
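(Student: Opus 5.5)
Over $K$ itself the torus $T$ need not be split, so Lemma~\ref{lem:exactconnectedneron} does not directly give an exact sequence of Néron models. I will therefore read off the ranks from $\ell$-adic Tate modules. Recall the standard dictionary from \cite{sga} Exposé IX: if $G_k$ is the reduction of a semi-abelian variety $G$ over $K$ with toric rank $\tau$, abelian rank $\alpha$ and unipotent rank $\lambda$, then
\[
\operatorname{rk}_{\Z_\ell} (\Tl G)^{I_K} = \operatorname{rk}_{\Z_\ell}\Tl (G_k) = \tau + 2\alpha .
\]
In addition, the toric part can be detected as the part of $(\Tl G)^{I_K}$ orthogonal to (or fixed inside) a suitable filtration. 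Since $\dim G_k = \dim G = \tau+\alpha+\lambda$, it suffices to determine $\tau$ and $\alpha$ separately for $A$, $T$ and $B$, and then to deduce the statement about unipotent ranks from additivity of dimensions, $g = t + a$.

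\textbf{Step 1.} Choose $\ell$ large and apply the Corollary above. This gives an exact sequence of $I_K$-invariants, so $\operatorname{rk}(\Tl A)^{I_K} = \operatorname{rk}(\Tl T)^{I_K} + \operatorname{rk}(\Tl B)^{I_K}$, that is,
\[
t_0 + 2a_0 = t'' + t' + 2a'.
\]
For $T$ we have $a=0$, since the reduction of a torus has no abelian part.

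\textbf{Step 2.} Separate the toric and abelian contributions. I would use the weight filtration given by Frobenius: after a finite unramified base change we may use a Frobenius element of the absolute Galois group of the completed base field. Alternatively, since $K$ is the maximal unramified extension, descend to a finite-level $p$-adic field and use its Frobenius. On $\Tl(G_k)$ the eigenvalues of Frobenius have weight $0$ (times $q$, i.e.\ absolute value $q$) on the toric part and weight $1$ (absolute value $q^{1/2}$) on the abelian part. The exact sequence of invariants from Step 1 is Frobenius-equivariant, so taking generalized eigenspaces of absolute value $q$ and $q^{1/2}$ gives $t_0 = t''+t'$ and $2a_0 = 2a'$.

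The fallback, if I want to avoid arithmetic Frobenius, is to work on the special fibres directly. The map $\mA^\circ\to\mB^\circ$ induces a homomorphism $A_k\to B_k$ whose kernel contains $T_k$ up to finite groups. I would then use that maximal tori and abelian quotients of connected smooth groups are functorial and exact up to isogeny; this seems harder without the exact sequence, so the Frobenius route is preferred.

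\textbf{Step 3.} Compute the unipotent ranks: $\lambda_A = g - t_0 - a_0 = (t + a) - (t''+t') - a' = (t - t'') + (a - t' - a') = \lambda_T + \lambda_B$.

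The main obstacle is Step 2. One must justify that the Corollary's exact sequence is compatible with a Frobenius action, and that the identification $(\Tl G)^{I_K}\cong \Tl(G_k)$ is Frobenius-equivariant. This requires defining $T,A,B$ over a finite-level field, which is harmless since they are of finite type.
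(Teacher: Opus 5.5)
Your proof is correct, but at the decisive step it takes a different route from the paper. Both arguments begin with the exact sequence of $I_K$-invariants from the Corollary (for $\ell$ large) and the rank identity $t_0+2a_0=t'+t''+2a'$.

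The paper then stays over the strictly henselian field $K$. It uses the Chevalley decomposition of $A_k$ as an extension of an abelian variety $B_0$ by $T_0\times U_0$, together with the fact that $\mT\to\mA\to\mB$ is a complex. From these it derives the inequalities $t'\geq t_0-t''$ and $a'\geq a_0$. Combined with the dimension identities and the rank identity, these force $a_0=a'$, hence $\lambda_A=\lambda_T+\lambda_B$ and $t_0=t'+t''$.

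You instead descend $T$, $A$, $B$ to a finite unramified extension $K_1$ of the base field. Since $I_K=I_{K_1}$, the sequence of invariants is equivariant for Frobenius. You identify $(\Tl G)^{I_K}$ with $\Tl(G_k)$ Frobenius-equivariantly: Néron lft models commute with unramified base change, and the finitely generated component group contributes nothing to the Tate module. You then split by the absolute values of Frobenius eigenvalues: $q$ on toric parts and $q^{1/2}$ on abelian parts, by Weil. Characteristic polynomials multiply along the exact sequence, so you get $t_0=t''+t'$ and $2a_0=2a'$ simultaneously. The same argument justifies your unproved remark that $T_k$ has no abelian part, and it makes Step~1 redundant.

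What your route costs is the descent and the Riemann hypothesis for abelian varieties over finite fields, both standard. What it buys is that you bypass the paper's auxiliary inequalities, which require tracking how the tori and abelian parts of $T_k$, $A_k$, $B_k$ map to one another, and you obtain both equalities at once.

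Two minor points. The fallback in Step~2 is unnecessary. Calling the two parts ``weight $0$'' and ``weight $1$'' is nonstandard (on $\Tl$ they have weights $-2$ and $-1$ for geometric Frobenius). The absolute values $q$ and $q^{1/2}$ you state are the right ones for the arithmetic Frobenius acting on torsion points.
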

\begin{proof}
     By definition we have the equalities $t'+a'+\lambda_B=\dim B_k=\dim B=a$, $t''+\lambda_T=\dim T_k=\dim T= t$ and $a_0+t_0+\lambda_A=\dim A_k=\dim A=a+t$.

     Let $\ell$ be a prime big enough and consider the exact sequence of $\ell$-adic Tate modules
    \[
    \begin{tikzcd}
    0 \arrow[r] & (\Tl T)^{I_K} \arrow[r] & (\Tl A)^{I_K}\arrow[r] & (\Tl B)^{I_K} \arrow[r] & 0.
    \end{tikzcd}
    \]

    This exact sequence identifies $\operatorname{T}_{\ell} A_k$ as an extension of $\Tl B_k$ by $\Tl T_k$.  We get the equality
    \[
    t_0+2a_0=t'+t''+2a'.
    \]

    \smallskip

    The variety $A_k$ is an extension of an abelian variety $B_0$ by a commutative group scheme $T_0\times U_0$ where $U_0$ is unipotent of dimension $\lambda_A$. We thus have an exact sequence
    \[
     \begin{tikzcd}
    0 \arrow[r] & \Tl T_0 \arrow[r] & \Tl A_k \arrow[r] & \Tl B_0 \arrow[r] & 0
    \end{tikzcd}
    \]
    where $t_0$ is the dimension of $T_0$ and $a_0$ that of $B_0$.

    \smallskip

    On the other hand, the sequence of Néron models 
    \[
    \mT\to \mA \to \mB
    \]
    is a complex so that the map $\operatorname{T}_{\ell} T_k\to \operatorname{T}_{\ell} B_k$ is zero. Note furthermore that the maps of Tate modules that realize $\Tl A_k$ as an extension of $\Tl B_k$ by $\Tl T_k$ are induced by the maps between Néron models.  We thus get another exact sequence
    \[
     \begin{tikzcd}
    0 \arrow[r] & \Tl T_0/\Tl T_k \arrow[r] & \Tl B_k\arrow[r] & \Tl B_0 \arrow[r] & 0
    \end{tikzcd}
    \]
    from which we get the inequalities
    \[
    t'\geq t_0-t'' \text{ and } a'\geq a_0
    \]

    \medskip

    We thus have the following equalities and inequalities
    \begin{align}
        t'&+a'+\lambda_B=a \label{eq:rangredB}\\
        t''&+\lambda_T=t  \label{eq:rangredT}\\
        t_0&+a_0+\lambda_A=a+t \label{eq:rangredA}\\
        t_0&+2a_0=t'+t''+2a' \label{eq:exactTlA}\\
        t'&\geq t_0-t'' \label{ineq:T_0subB} \\
        a'&\geq a_0 \label{ineq:B0subB}
    \end{align}

    By using equation (\ref{eq:rangredA}) and replacing the values of $t$ and $a$ by those in equations (\ref{eq:rangredT}) and (\ref{eq:rangredB}) we get
    \[
    \lambda_A= t'+t''+a' +\lambda_T+\lambda_B-t_0-a_0.
    \]
    Now using equation (\ref{eq:exactTlA}) we get
    \[
    \lambda_A= \lambda_T+\lambda_B+ a_0-a'.
    \]
    Hence with inequality (\ref{ineq:T_0subB}) and replacing $t'$ by its expression in (\ref{eq:exactTlA}) we get 
    \[
    t_0+2a_0-t''-2a'\geq t_0-t''
    \]
    which simplifies to
    \[
    a_0\geq a'.
    \]
    With the inequality (\ref{ineq:B0subB}) we thus obtained $a_0=a'$ and $\lambda_A=\lambda_T+\lambda_B$. It also follows that $t_0=t'+t''$.
    \end{proof}

We can now state the different characterizations of semi-stable reduction for semi-abelian varieties.

\begin{theo} \label{theo:characsemistable}
    Let $A$ be a semi-abelian variety over $K$. The following statements are equivalent.
    \begin{itemize}
        \item[$(i)$] The semi-abelian variety $A$ has semi-stable reduction.
        \item[$(ii)$] The torus $T$ has good reduction and the abelian variety $B$ has semi-stable reduction.
        \item[$(iii)$] The action of $I_K$ on $\operatorname{T}_{\ell} A$ is unipotent of order $3$ for any $\ell \neq p$.
        \item[$(iii)'$] The action of $I_K$ on $\operatorname{T}_{\ell} A$ is unipotent for any $\ell \neq p$.

    \end{itemize}

    Furthermore, when one of those holds, the toric rank of $A_k$ is $t+t_0$ and its abelian rank is $a_0$ where $B_k$ is an extension of a torus of dimension $t_0$ by an abelian variety of dimension $a_0$.
\end{theo}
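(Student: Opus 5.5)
The plan is to prove $(i)\Leftrightarrow(ii)$ with Proposition~\ref{prop:reduccomparrangs}, and $(ii)\Leftrightarrow(iii)\Leftrightarrow(iii)'$ through the exact sequence $0\to\Tl T\to\Tl A\to\Tl B\to 0$ and the classical criteria for tori and abelian varieties. Throughout, semi-stable reduction of a semi-abelian variety means that the unipotent rank of its reduction vanishes, so $\lambda_A=0$.

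For $(i)\Leftrightarrow(ii)$, Proposition~\ref{prop:reduccomparrangs} gives $\lambda_A=\lambda_T+\lambda_B$. Since unipotent ranks are non-negative, $\lambda_A=0$ holds exactly when $\lambda_T=0$ and $\lambda_B=0$. For an abelian variety, $\lambda_B=0$ is the definition of semi-stable reduction. For a torus, $\lambda_T=0$ means $T_k$ is a torus of dimension $t$, i.e.\ $t''=t$. I will then show that this is equivalent to good reduction of $T$. Over the maximal unramified $K$, the torus $T$ is split by a finite extension. The character module $X(T)$ is an $I_K$-lattice, and the toric rank of the Néron reduction equals $\operatorname{rank} X(T)^{I_K}$. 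Hence $t''=t$ if and only if $I_K$ acts trivially on $X(T)$, if and only if $T$ is split, which for a torus is the same as having good reduction. I will cite Chapter~10 of \cite{BLR90} for this last fact rather than reprove it.

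For $(ii)\Rightarrow(iii)$: if $T$ has good reduction it is split, so $I_K$ acts trivially on $\Tl T$. If $B$ has semi-stable reduction, Grothendieck's theorem gives that $(\sigma-1)^2=0$ on $\Tl B$ for $\sigma\in I_K$. From the exact sequence, $(\sigma-1)^2$ maps $\Tl A$ into $\Tl T$, and then $(\sigma-1)^3=(\sigma-1)(\sigma-1)^2$ vanishes since $\sigma-1$ kills $\Tl T$. So the action is unipotent of order $3$. The implication $(iii)\Rightarrow(iii)'$ is trivial.

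For $(iii)'\Rightarrow(ii)$: the action on the sub $\Tl T$ is unipotent. It also factors through the finite group $\Gal(L/K)$ of a splitting field, and a unipotent automorphism of finite order on a $\Z_\ell$-lattice in characteristic $0$ is trivial. Hence $I_K$ acts trivially on $\Tl T$, so on $X(T)\otimes\Z_\ell$ and on $X(T)$, and $T$ is split with good reduction. On the quotient $\Tl B$ the action is unipotent, so $B$ has semi-stable reduction by Grothendieck's criterion.

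For the final assertion, semi-stability gives $t''=t$ and $\lambda_B=0$. Proposition~\ref{prop:reduccomparrangs} then yields $t_0(A)=t'+t$ and $a_0(A)=a'$, where $t'$ and $a'$ are the toric and abelian ranks of $B_k$; these are the $t_0$ and $a_0$ of the statement.

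The step I expect to be the main obstacle is the torus claim, that $\lambda_T=0$ is equivalent to $T$ being split. This needs the identification of the maximal torus of the reduction of the Néron model of $T$ with the inertia-invariant characters. If this is delicate, the fallback is to avoid $(i)\Leftrightarrow(ii)$ through ranks and instead compare $\operatorname{rank}(\Tl A)^{I_K}$ with $\operatorname{rank}\Tl A$, using the corollary's exact sequence of inertia invariants for $\ell$ large.
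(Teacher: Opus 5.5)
Your proposal is correct and follows the paper's proof. You get $(ii)\Leftrightarrow(iii)\Leftrightarrow(iii)'$ from the exact sequence of Tate modules, with trivial inertia action on $\Tl T$ and Grothendieck's order-$2$ criterion on $\Tl B$, and you get $(i)\Leftrightarrow(ii)$ and the rank statement from $\lambda_A=\lambda_T+\lambda_B$ in Proposition~\ref{prop:reduccomparrangs}; your added argument that $\lambda_T=0$ is equivalent to $T$ being split, hence of good reduction, over the maximal unramified base only spells out a step the paper leaves implicit.
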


\begin{proof}
    If the action of $I$ on the $\ell$-adic Tate module of $A$ is unipotent then its action on $T$ is trivial and the induced one on $B$ is then unipotent of order $2$ by \cite{sga} Exposé IX Proposition~3.5. We get that $(iii)$ is equivalent to $(iii)'$. In both cases, $T$ has good reduction and $B$ has semi-stable reduction so 
    \[
    (iii')\iff(iii)\iff (ii).
    \]
 
    \medskip

    The last equivalence $(i)\iff (ii)$ follows directly from Proposition~\ref{prop:reduccomparrangs} and so does the last part of the statement.
    
\end{proof}

\subsection{The finite monodromy groups of semi-abelian varieties}

\subsubsection{} Let $A$ be a semi-abelian variety over a number field $K$ and $v\in \Sigma_K$ a non-archimedean place above $p$. We will first define finite monodromy groups for $A$. We denote by $K_v^{\mathrm{un}}$ the maximal unramified extension of the completion $K_v$ of $K$ at $v$. Its absolute Galois group $\Gal( \overline{K_v}/ K_v^{\mathrm{un}})$ is the absolute inertia group $I_v$ of $K$ at $v$. 

\begin{lem}
    Let $\ell\neq p$. There is an open subgroup $I_{v,A}\subset I_v$, independent of $\ell$, such that for every open subgroup $I'\subset I_v$ the restriction of the action of $I_v$ on $\Tl A$ is unipotent if and only if $I'\subset I_{v,A}$. Furthermore, $I_{v,A}$ is normal inside $I_v$ and inside $G_{K_v}$.
\end{lem}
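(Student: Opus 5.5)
Fix $\ell\neq p$ and define $I_{v,A}^{(\ell)}$ as the kernel of the action of $I_v$ on $A[\ell^m]$, with $m=1$ for $\ell\geq 3$ and $m=2$ for $\ell=2$; equivalently, take the kernel of the action on $\Tl A/\ell^m\Tl A$. This is an open normal subgroup of $I_v$. It is also normal in $G_{K_v}$, because $A[\ell^m]$ is a $G_{K_v}$-module and $I_v$ is normal in $G_{K_v}$, so the kernel of the restriction to $I_v$ is stable under conjugation by $G_{K_v}$.

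The plan has two steps. First we show that $I'\subset I_v$ acts unipotently on $\Tl A$ if and only if $I'\subset I_{v,A}^{(\ell)}$. Second we show that $I_{v,A}^{(\ell)}$ does not depend on $\ell$ and set $I_{v,A}$ to be this common group.

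\textbf{First step.} By Theorem~\ref{theo:characsemistable}, unipotence of $I'$ on $\Tl A$ is equivalent to $A$ having semi-stable reduction over the extension $L'$ of $K_v^{\mathrm{un}}$ cut out by $I'$. That criterion is independent of $\ell$, which already shows that the set of open subgroups acting unipotently does not depend on $\ell$.

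For the inclusion $I'\subset I_{v,A}^{(\ell)}\Rightarrow$ unipotent, we use Raynaud's criterion in the semi-abelian setting. Write $\Tl A$ as an extension of $\Tl B$ by $\Tl T$. The group $I'$ acts trivially on $T[\ell^m]$, and the image of $I_v$ acting on $\Tl T$ is finite, since $T$ splits over a finite extension. A finite-order element of $\GL_t(\Z_\ell)$ that is trivial mod $\ell^m$ is trivial (Minkowski–Serre). Hence $I'$ acts trivially on $\Tl T$, and $T$ has good reduction over $L'$.

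On $B$ we invoke Raynaud's theorem (SGA7 IX, Prop.~4.7): if the action on $B[\ell^m]$ is trivial, $B$ has semi-stable reduction. Here we need triviality on $B[\ell^m]$. This is the main obstacle: $B[\ell^m]$ is a quotient of $A[\ell^m]$, so triviality on $A[\ell^m]$ passes to $B[\ell^m]$ and is fine in this direction. Theorem~\ref{theo:characsemistable}$(ii)\Rightarrow(i)$ then gives unipotence.

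For the converse, unipotence does not force triviality mod $\ell$ directly. Instead we pass to a canonical $\ell$-independent group. We define $I_{v,A}$ as the intersection of all open subgroups acting unipotently. By Grothendieck's theorem there is a smallest extension $L/K_v^{\mathrm{un}}$ over which $A$ is semi-stable; it is the fixed field of $I_{v,T}\cap I_{v,B}$, where $I_{v,B}$ is the known Serre–Tate/Grothendieck group for abelian varieties and $I_{v,T}$ is the kernel of the finite action on the character group of $T$. We show that unipotence of $I'$ is equivalent to $I'\subset I_{v,T}\cap I_{v,B}$. By Theorem~\ref{theo:characsemistable} it is equivalent to $I'$ acting trivially on $\Tl T$ (equivalently on $X^*(T)$) and unipotently on $\Tl B$. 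The first condition is $I'\subset I_{v,T}$. The second is $I'\subset I_{v,B}$ by the known abelian case: the set of such subgroups is closed under taking the group generated by two of them, since the intersection of two Galois extensions over which $B$ is semi-stable still works by Grothendieck's semi-stable reduction theorem via the minimal field.

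Both $I_{v,T}$ and $I_{v,B}$ are defined without reference to $\ell$. They are normal in $G_{K_v}$: conjugating by $\sigma\in G_{K_v}$ transports a unipotent action to a unipotent one, because $A$ is defined over $K_v$ and $\sigma$ normalizes $I_v$. Hence $I_{v,A}:=I_{v,T}\cap I_{v,B}$ is open, normal in $I_v$ and in $G_{K_v}$, independent of $\ell$, and has the required property.
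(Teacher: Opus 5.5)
Your final argument is correct and is the paper's proof: you set $I_{v,A}=I_{v,T}\cap I_{v,B}$ and observe that $I'$ acts unipotently on $\Tl A$ exactly when it acts trivially on $\Tl T$ and unipotently on $\Tl B$; your passage through $X^*(T)$ also makes explicit the $\ell$-independence of $I_{v,T}$, which the paper leaves implicit. You should delete the opening detour through the kernels $I_{v,A}^{(\ell)}$ on $A[\ell^m]$, because both the announced equivalence and the claimed $\ell$-independence of $I_{v,A}^{(\ell)}$ are false in general: for an elliptic curve with split multiplicative reduction one has $I_{v,A}=I_v$, while the kernel on $E[\ell]$ is a proper subgroup depending on $\ell$ whenever $\ell\nmid v(q)$.
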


\begin{proof}
    Let $I_{v,B}$ be the corresponding subgroup for $B$, which exists by \cite{sga} Exposé~IX, and $I_{v,T}$ the kernel of the action of $I_v$ on $\Tl T$. Let us set $I_{v,A}=I_{v,B}\cap I_{v,T}$. If $I'\subset I_v$ is an open subgroup such that the action of $I'$ on $\Tl A$ is unipotent, we have that $I'$ acts trivially on $\Tl T$ and unipotently on $\Tl B$. We thus have that $I'\subset I_{v,A}$ and conversely.

    \medskip

    Since $I_{v,A}$ is the intersection of normal subgroups in either $G_{K_v}$ or $I_v$ it is itself normal in both groups.
\end{proof}

The main definition of the section is as follows.

\begin{defin} \label{def:monodfinie}
    The finite monodromy group of $A$ at $v$ denoted by $\Phi_{A,v}$ is defined as the quotient $I_v/I_{v,A}$. 
\end{defin}
It follows from the definition that the group $\Phi_{A,v}$ is the Galois group of the smallest extension $K_{v,A}$ of $K_v^{\mathrm{un}}$ such that $A_{K_{v,A}}$ has semi-stable reduction.

\medskip

For a semi-abelian variety over a $p$-adic field we thus have a similar definition allowing us to consider the finite monodromy group of such a variety. 

\medskip

Let us see the basic properties related to these groups.

\begin{prop}
    The group $\Phi_{A,v}$ has the following properties.
    \begin{itemize}
        \item[(i)] The semi-abelian variety $A$ has semi-stable reduction at $v$ if and only if $\Phi_{A,v}=\{1\}$
        \item[(ii)] There is an isomorphism
        \[
        \Phi_{A,v}\simeq \Gamma_p\rtimes \Z/n\Z
        \]
        where $\Gamma_p$ is a $p$-group and $n$ is an integer prime to $p$.
        \item [(iii)] For $L/K$ a finite extension with a place $w\mid v$ such that $A_L$ has semi-stable reduction at $w$ there is an injective map 
        \[
        \Phi_{A,v} \hookrightarrow \Aut (A_L)_{\overline{k(w)}}.
        \]
    \end{itemize}
\end{prop}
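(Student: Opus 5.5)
Each of the three items comes almost directly from the definition $\Phi_{A,v}=I_v/I_{v,A}$ together with Theorem~\ref{theo:characsemistable}, and the plan is to treat them in order.

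\emph{(i).} By Theorem~\ref{theo:characsemistable}, applied over $K_v^{\mathrm{un}}$, $A$ has semi-stable reduction at $v$ if and only if $I_v$ acts unipotently on $\Tl A$. By the defining property of $I_{v,A}$ (take $I'=I_v$), this holds if and only if $I_v\subset I_{v,A}$, that is, $\Phi_{A,v}=\{1\}$.

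\emph{(ii).} The inertia group $I_v$ sits in the extension
\[
1\to P_v\to I_v\to \prod_{q\neq p}\Z_q(1)\to 1 ,
\]
where $P_v$ is the wild inertia, a pro-$p$ group, and the tame quotient is procyclic of order prime to $p$. Its image in the finite quotient $\Phi_{A,v}$ is a normal $p$-subgroup $\Gamma_p$ with cyclic quotient $\Z/n\Z$, where $p\nmid n$. Since $\Gamma_p$ is then a normal Sylow $p$-subgroup, Schur--Zassenhaus gives the splitting $\Phi_{A,v}\simeq\Gamma_p\rtimes\Z/n\Z$.

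\emph{(iii).} This is the only item with real content. Write $L_w^{\mathrm{un}}$ for the maximal unramified extension of $L_w$. Since $A_L$ has semi-stable reduction at $w$, part (i) gives $I_w\subset I_{v,A}$, so $\Phi_{A,v}$ is a quotient of $\Gal(L_w^{\mathrm{un}}K_{v,A}/K_v^{\mathrm{un}})$. Replacing $L_w^{\mathrm{un}}$ by the Galois closure of $L_w^{\mathrm{un}}K_{v,A}$ over $K_v^{\mathrm{un}}$ does not change the reduction (semi-stable reduction is preserved under extension), so we may take $L_w^{\mathrm{un}}/K_v^{\mathrm{un}}$ Galois with group $H$ surjecting onto $\Phi_{A,v}$.
\begin{enumerate}
\item Let $\mathcal A$ be the Néron model of $A$ over $\OO_{L_w^{\mathrm{un}}}$. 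By the Néron mapping property, $\sigma\in H$ acts on $\mathcal A$ semilinearly over $\OO_{L_w^{\mathrm{un}}}$, hence it acts on the identity component of the special fibre.
\item Because $\sigma$ lies in inertia, it acts trivially on the residue field, so this action is by automorphisms $\rho(\sigma)\in\Aut(A_L)_{\overline{k(w)}}$ of the algebraic group. This gives a homomorphism $\rho\colon H\to \Aut(A_L)_{\overline{k(w)}}$.
\item The key claim is $\ker\rho=I_{v,A}/I_w$, which makes $\rho$ factor through $\Phi_{A,v}$ and then be injective.
\end{enumerate}

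To prove the claim I will compare with the action on $\Tl A$ for $\ell\neq p$. By Lemma~\ref{lem:neronmodels} and the proof of Proposition~\ref{prop:reduccomparrangs}, $(\Tl A)^{I_w}=\Tl(\mathcal A_k^{\circ})$, and $H$ acts on it compatibly with $\rho$. If $\rho(\sigma)=1$, then $\sigma$ acts trivially on $(\Tl A)^{I_w}$. The action of $I_w$ has order-$3$ unipotence, with filtration $\Tl T\subset(\Tl A)^{I_w}\subset\Tl A$ together with the Grothendieck orthogonality, which relates the last graded piece to the toric part by duality (cf.\ \cite{sga} Exp.~IX). From this, $\sigma$ acts unipotently on all of $\Tl A$, so $\sigma\in I_{v,A}$. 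Conversely, elements of $I_{v,A}$ act unipotently and through a finite quotient, so they act trivially on the fixed lattice, hence trivially on $\mathcal A_k^{\circ}$ by a rigidity argument on $\ell$-power torsion, which is Zariski dense in the semi-abelian reduction.

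\textbf{Expected main obstacle.} The hard step is precisely this kernel identification. For the toric part $T$ I will use directly that $I_{v,T}$ is the kernel on $\Tl T$. The duality step extending triviality on $(\Tl A)^{I_w}$ to unipotence on all of $\Tl A$ is where I expect to follow the abelian case of \cite{Phi22a}, applied to $B$, and then assemble it with the torus using the exact sequences established above.
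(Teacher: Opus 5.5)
Your proposal is correct and follows essentially the same route as the paper. Items (i) and (ii) are the standard observations. For (iii), the paper also uses the Néron mapping property to turn the Galois action into an action on the special fibre, and gets faithfulness from the injection $\Phi_{A,v}\hookrightarrow\Aut\,(\Tl A)^{I_{v,A}}\simeq\Aut\Tl\bigl((A_L)_{\overline{k(w)}}\bigr)$. The differences are that the paper works directly over $K_{v,A}$ rather than a Galois closure of $L_w^{\mathrm{un}}$, and that you spell out the kernel identification which the paper compresses into its diagram.

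In that step, the duality you need is with the toric part of the reduction of $B$, not with $\Tl T$ as your middle sentence suggests. Your last paragraph already gives the clean version: triviality on $(\Tl A)^{I_w}$ gives $\sigma\in I_{v,T}$. Via the surjection $(\Tl A)^{I_w}\to(\Tl B)^{I_w}$ of Lemma~\ref{lem:neronmodels} and the abelian case, it also gives $\sigma\in I_{v,B}$. Hence $\sigma\in I_{v,B}\cap I_{v,T}=I_{v,A}$.
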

\begin{proof}
    Property $(i)$ follows directly from the definition of $\Phi_{A,v}$ and $(ii)$ is a standard fact about inertia groups of $p$-adic fields. 

    \medskip

    For $(iii)$, let us first base change to the maximal unramified extension $K_v^{\mathrm{un}}$ of $K_v$. Consider $A_{K_{A,v}}$ with its canonical descent datum. Note that its reduction is exactly $(A_L)_{\overline{k(w)}}$. By the Néron mapping property the descent datum extends to the Néron model $\mA$ of $A_{K_{A,v}}$ and by pullback to the special fibers it produces an action of $\Phi_{A,v}$ on $(A_L)_{\overline{k(w)}}$. It is left to show that this action is faithful. This last point comes from the following commutative diagram
    \[
    \begin{tikzcd}
        I_v \arrow[r] & \Phi_{A,v} \arrow[r, hook] \arrow[d] & \Aut (\operatorname{T}_{\ell} A)^{I_{v,A}} \arrow[d, "\sim" {rotate=90, anchor=north}] \\
        & \Aut (A_L)_{\overline{k(w)}} \arrow[r, hook] & \operatorname{T}_{\ell} (A_L)_{\overline{k(w)}} 
    \end{tikzcd}
    \]
\end{proof}

Let us finish by recording a characterization which is due to Silverberg and Zarhin in the case of abelian varieties - see \cite{SZ95}~Theorem~5.2. The proof is essentially identical and is omitted.    

\begin{prop}
    Let $\rho_{A,\ell}\colon G_K\to \GL_{2a+t}(\Q_{\ell})$ be the $\ell$-adic representation from a choice of basis on $\operatorname{T}_{\ell}A$. Let $G_v$ be the Zariski closure of $\rho_{A,\ell}(I_v)$ in $\GL_{2a+t} (\Q_{\ell})$. We have an isomorphism
    \[
    \Phi_{A,v} \simeq G_v/G_v^{\circ}.
    \]
\end{prop}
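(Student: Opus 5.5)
The idea is to identify $G_v^{\circ}$ with the Zariski closure of $\rho_{A,\ell}(I_{v,A})$, and to show that $\rho_{A,\ell}(I_v)\cap G_v^{\circ}=\rho_{A,\ell}(I_{v,A})$. Together these give a surjection $I_v\to G_v/G_v^{\circ}$ with kernel $I_{v,A}$, hence $\Phi_{A,v}\simeq G_v/G_v^{\circ}$.

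Let $H$ be the Zariski closure of $\rho_{A,\ell}(I_{v,A})$. By the lemma defining $I_{v,A}$, the action of $I_{v,A}$ on $\Tl A$ is unipotent. By Theorem~\ref{theo:characsemistable}, applied over $K_{v,A}$ (where $A$ has semi-stable reduction), it is unipotent of order $3$. The tame quotient argument recalled earlier shows that this action factors through a pro-$\ell$ quotient $\Z_\ell$ and is given by a single unipotent element $u$. Hence $\rho_{A,\ell}(I_{v,A})=u^{\Z_\ell}$. Its Zariski closure is $\{\exp(sN) : s\in\Q_\ell\}$ with $N=\log u$ nilpotent. This group is isomorphic to $\G_a$ or trivial, so it is connected. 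Since $I_{v,A}$ is open of finite index in $I_v$, the group $H$ has finite index in $G_v$. Being closed and of finite index, $H$ is open, and being connected it equals $G_v^{\circ}$. Note that $H$ is normal because $I_{v,A}$ is normal in $I_v$.

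This yields a surjection $\Phi_{A,v}=I_v/I_{v,A}\to G_v/G_v^{\circ}$. It is surjective because $\rho_{A,\ell}(I_v)$ is Zariski dense in $G_v$, and each coset of $G_v^{\circ}$ is open in $G_v$, so it meets $\rho_{A,\ell}(I_v)$.

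For injectivity, let $\sigma\in I_v$ with $\rho_{A,\ell}(\sigma)\in G_v^{\circ}$. Every element of $G_v^{\circ}=\exp(\Q_\ell N)$ is unipotent, so $\rho_{A,\ell}(\sigma)$ is unipotent. Then the closed subgroup generated by $\sigma$ and $I_{v,A}$ is open in $I_v$ and acts through $\rho_{A,\ell}$ by elements of $\exp(\Q_\ell N)$, which is a commutative unipotent group. So the action of this subgroup on $\Tl A$ is unipotent. By the defining property of $I_{v,A}$ (any open subgroup acting unipotently is contained in $I_{v,A}$), we get $\sigma\in I_{v,A}$. This is the same mechanism as in \cite{SZ95}~Theorem~5.2.

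The main point requiring care is the claim that $\rho_{A,\ell}(I_{v,A})$ has connected, unipotent Zariski closure. This uses two facts:
\begin{itemize}
\item the unipotent action factors through the $\Z_\ell$ tame quotient, as recalled at the start of the section;
\item the image is monogenic, so its closure is the one-parameter unipotent group $\exp(\Q_\ell N)$.
\end{itemize}
With the toric part one must note that $I_{v,A}$ acts trivially on $\Tl T$. The argument above still applies, since $N$ simply vanishes on $\Tl T\otimes\Q_\ell$.
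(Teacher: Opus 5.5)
Your proof is correct and follows essentially the route the paper intends; the paper omits the proof, saying it is identical to \cite{SZ95}~Theorem~5.2. As there, you identify $G_v^{\circ}$ with the Zariski closure of $\rho_{A,\ell}(I_{v,A})$, a unipotent and hence connected group of finite index in $G_v$, and you get injectivity from the maximality of $I_{v,A}$ among open subgroups acting unipotently, with the semi-abelian point that $I_{v,A}$ acts trivially on $\Tl T$ handled correctly.
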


Since $G_v$ has Zariski dense $\Q_{\ell}$-rational points the finite étale group of components $G_v/G_v^{\circ}$ is identified with its group of rational points.

\subsubsection{}  We turn to characterizations of finite monodromy groups of semi-abelian varieties. The first one is geometric and uses semi-abelian varieties over finite fields. We give a group theoretic characterization afterwards.

\begin{prop}\label{prop:caracgeom}
    Let $G$ be a finite ramification group at $p$. Let $A_0$ be a semi-abelian variety of dimension $g$ over a finite field $k$ of characteristic $p$ with a polarization $\lambda_0$ such that there is an injective homomorphism $\iota \colon  G\hookrightarrow \Aut (A_0, \lambda_0)$. Assume furthermore that there is a torus $T_1\subset T_0$ of dimension $t_1$ which is stable by the action of $G$. Then there is a semi-abelian variety $A$ over a $p$-adic field $K$ of dimension $g$ and toric rank $\dim T_1$ such that $G$ is the finite monodromy group of $A$. Furthermore, $A$ is such that for any extension $L$ of $K$ for which $A_L$ has semi-stable reduction, the group $G$ acts faithfully on the reduction $A'_0$ of $A_L$ and stabilizes a subtorus of dimension $t_1$. 
\end{prop}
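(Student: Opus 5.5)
This is a lifting-and-twisting argument, following the proof of the analogous statement for abelian varieties in \cite{Phi22a} and \cite{Ph22b}. Since $G$ is a finite ramification group at $p$, there is a $p$-adic field $K_0$ with residue field $k$ and a finite Galois extension $L/K_0$ that is totally ramified with $\Gal(L/K_0)\simeq G$. We take $K=K_0$.

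The first step lifts $(A_0,\lambda_0)$ together with its $G$-action to a semi-abelian scheme over $W(k)$, or over a finite extension $\OO$ of it, contained in $\OO_L$ after enlarging $K_0$ unramifiedly. Write $A_0$ as an extension of an abelian variety $B_0$ by a torus $T_0$. The torus lifts uniquely with its automorphisms, since its character group is étale. For the abelian part we use deformation theory. The polarized abelian variety $(B_0,\lambda_0)$ with the $G$-action induced by $\iota$ (the action preserves $T_0$, being its maximal torus) lifts as a polarized abelian scheme with $G$-action. For $p\nmid |G|$ this is Grothendieck--Messing rigidity of automorphisms together with Serre--Tate theory. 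For $p$ dividing $|G|$ I would instead choose a CM lift: replace $A_0$ up to the data by one defined over a finite field, use Honda--Tate, and lift with an $\End$-equivariant lift as in \cite{Phi22a}. The extension class in $\operatorname{Ext}^1(B_0,T_0)\simeq \widehat{B_0}^{\,t_0}$ is a $k$-point, which we lift $G$-equivariantly by Teichmüller or CM-lifting the corresponding point of the dual. We obtain a semi-abelian scheme $\mathcal{A}_0$ over $\OO$, with generic fiber $A_1$ of toric rank $t_0$, on which $G$ acts faithfully.

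In this generic fiber, however, we want toric rank exactly $\dim T_1$, not $\dim T_0$. So we only keep $T_1$ as a genuine torus, and replace the $G$-stable complement $T_0/T_1$ by a Tate-uniformized (Mumford) degeneration. Concretely, $A$ is the quotient of the $T_0$-extension by a $G$-stable period lattice in $(T_0/T_1)(K)$, chosen with generic values, giving toric rank $t_1$ generically and reduction $A_0$. Because the lattice is $G$-stable, the $G$-action survives. This is the step I expect to be the main obstacle: producing a $G$-equivariant period lattice requires the $G$-action on the character group of $T_0/T_1$ to be compatible with a nondegenerate $G$-invariant period pairing. I would try taking periods $q^{\text{Gram}}$ built from a $G$-invariant positive form on the cocharacter lattice, with $q$ a uniformizer of $K$.

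Finally we twist. Let $\xi\colon \Gal(\overline K/K)\to \Gal(L/K)\simeq G\to \Aut(A_1)$ be the cocycle, and let $A$ be the twist of $A_1$ by $\xi$. Then $A_L\simeq (A_1)_L$ has semi-stable reduction, so the finite monodromy group $\Phi_{A,v}$ is a quotient of $G$. The inertia $I_v$ acts on $\Tl A^{I_{v,A}}\simeq \Tl A_0'$ through $\iota\circ\xi$, which is faithful on $G$ by the injectivity of $\iota$ and the faithfulness of $\Aut(A_0)\to \Aut(\Tl A_0)$. Hence $\Phi_{A,v}\simeq G$, by property (iii) above and the diagram in its proof. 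The toric rank of $A$ is $t_1$, because twisting preserves dimensions. For any $L'$ over which $A$ becomes semi-stable, the action of $\Phi_{A,v}=G$ on the reduction of $A_{L'}$ is faithful by property (iii). It stabilizes the reduction of the torus part, which has dimension $t_1$ by Theorem~\ref{theo:characsemistable}, because the torus of $A$ reduces to a $G$-stable subtorus.
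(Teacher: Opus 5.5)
There is a genuine gap in your first step: the lift-then-twist strategy breaks down as soon as $p$ divides $\Card G$. Twisting a characteristic-zero $A_1$ with semi-stable reduction by a cocycle through $G\hookrightarrow\Aut_K(A_1)$ can only give $\Phi_A\simeq I(L/K)\subset\Aut_K(A_1)$ (Proposition~\ref{prop:twistorus}). So you need $\iota(G)\subset\Aut(A_0,\lambda_0)$ to lift to characteristic zero as a group of automorphisms, and in general it does not. Your CM fallback does not repair this. CM lifting (Tate, Oort) lifts a commutative CM subalgebra, possibly only after an isogeny, but not the full endomorphism ring, so in general there is no ``$\End$-equivariant lift''. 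Here is a concrete failure. Take $A_0=E_0$, the supersingular elliptic curve over $\F_4$ of Lemma~\ref{lem:curveellipticdeform}, with $t_0=t_1=0$ and $G=Q_8\subset\Aut E_0$; this is a $2$-group, hence a ramification group at $2$. Your $A_1$ would then be an elliptic curve in characteristic $0$ with $Q_8\subset\Aut A_1$. That is impossible, since such automorphism groups are cyclic of order $2$, $4$ or $6$. Yet the proposition asserts $\Phi_A\simeq Q_8$ for some $A$, and Lemma~\ref{lem:curveellipticdeform} produces one. The proposition is meant to cover exactly these groups, which do not act on any characteristic-zero model.

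The missing idea is to realize $G$ as a descent datum on a lift over $\OO_L$, rather than as automorphisms of a lift over $\OO_K$. One deforms using Serre--Tate and filtered $\varphi$-modules, choosing the Hodge filtration on $D\otimes L$ to be stable under the semi-linear action $\rho(h)\otimes h^{-1}$, as in Lemma~\ref{lem:curveellipticdeform}. The paper does not redo this by hand. It uses $\widetilde\iota$ to split $G$ into two pieces: $G_1$, the kernel of the action on $A_0/T_1$ (which therefore acts faithfully on $T_1$), and the image $G_2$. It then invokes Th\'eor\`emes~3.2 and~3.10 of \cite{Ph4} to get an abelian variety $B$ lifting $A_0/T_1$ with $\Phi_B\simeq G_2$ and semi-stable over $L^{G_1}$; this also absorbs your Mumford-degeneration step. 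Next it takes $T$ to be the twist of $\G_m^{t_1}$ by $\mathrm{pr}_1\circ\widetilde\iota\colon G\to\GL_{t_1}(\Z)$. Twisting is harmless for tori, because their automorphism group is the constant group $\GL_{t_1}(\Z)$. Finally it sets $A=T\times B$. Then $K_A=L$ because $\Ker(\mathrm{pr}_1\circ\widetilde\iota)\cap G_1=\{1\}$, and hence $\Phi_A\simeq G$.
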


\begin{proof}
    The injective homomorphism $G\hookrightarrow \Aut (A_0,\lambda_0)$ gives an injective map $G\hookrightarrow \Aut (T_0, \lambda_{T_0})\times \Aut (B_0, \lambda_{B_0})$. The condition that $G$ stabilizes $T_1$ further gives that this injection can be refined in an embedding
    \[
    \widetilde{\iota}\colon G\longrightarrow \Aut T_1 \times \Aut(T_0/T_1\times B_0, \widetilde{\lambda_0})
    \]
    where $\widetilde{\lambda_0}$ is the induced polarization. Let $G_2$ be the image of the projection of $G$ onto the second factor and $G_1$ be the kernel of this map. Since $\widetilde{\iota}$ is an embedding $G_1$ acts faithfully on $T_1$. Let us now fix an extension $L$ of $\Q_p^{\mathrm{un}}$ which is Galois of group $G$ and denote by $L_B$ the subfield fixed by $G_1$ -- this is possible by the main result of \cite{Mau68}.  

    \medskip
    
    Now, by Théorème~3.2 and Théorème~3.10 of \cite{Ph4} the semi-abelian variety $T_0/T_1\times B_0$ lifts to an abelian variety $B$ over the field $K^{\mathrm{un}}$ such that $G_2$ is the finite monodromy group of $B$ and $B_{L_B}$ has semi-stable reduction. We can now consider the semi-abelian variety $A=T\times B$ where $T$ is the torus given by the $L$-twist of $\G_m^{t_1}$ by the map $\operatorname{pr}_1\circ \widetilde{\iota}\colon G\to \operatorname{GL}_{t_1}(\Z)$. By Proposition~\ref{prop:twistorus} the group $G_1$ is a subgroup of the finite monodromy group $\Phi_T$ of $T$ and it follows that $K_A=L$. In particular, we have $\Phi_A=G$. 

    \medskip

    Through its Galois action on $L$ the group $G$ acts on $A_L$ and on its Néron model. This action respects the exact sequence given by Lemma~\ref{lem:neronmodels} and thus, by passing to the special fiber, stabilizes a subtorus of dimension $t_1$. 

    \medskip

    The semi-abelian variety $A$ descends to a $p$-adic field $K$. For a field $F/K$ such that $A_F$ has semi-stable reduction, one can check that the action of $G$ we defined on the base change $A'_{\overline{k}}$ of the reduction $A'$ of $A_F$ as a semi-abelian variety over the residue field $k'$ of $F$ descends to $A'$. 
\end{proof}

This directly provides a geometric characterization of finite monodromy groups of semi-abelian varieties.

\begin{cor}
    Let $G$ be a finite ramification group at $p$. Then $G$ is the finite monodromy group of a semi-abelian variety $A$ of toric rank $t$ and abelian rank $a$ over a number field $K$ at a place $v\in \Sigma_K$ above $p$ if and only if there is a polarized semi-abelian variety $(A_0,\lambda_0)$ of dimension $t+a$ over a finite field of characteristic $p$ with a faithful action of $G$ that stabilizes a torus $T_0\subset A_0$ of dimension $t$.
\end{cor}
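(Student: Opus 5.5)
The plan is to prove the two implications separately. The converse direction is exactly Proposition~\ref{prop:caracgeom}, and the direct direction follows from the faithful action of the finite monodromy group on the reduction over a semi-stable extension.

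\emph{Sufficiency.} Suppose $(A_0,\lambda_0)$ is a polarized semi-abelian variety of dimension $t+a$ over a finite field of characteristic $p$, with a faithful $G$-action stabilizing a torus $T_0'\subset A_0$ of dimension $t$. We apply Proposition~\ref{prop:caracgeom} with $T_1=T_0'$ and $t_1=t$. This gives a semi-abelian variety $A$ over a $p$-adic field, of dimension $t+a$ and toric rank $t$, whose finite monodromy group is $G$. Its abelian rank is therefore $a$.

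It remains to pass from a $p$-adic field to a number field. A semi-abelian variety over a $p$-adic field is an extension of an abelian variety $B$ by a twisted torus $T$. Both are defined by finitely many equations, and the extension class lies in $\operatorname{Ext}(B,T)$, which is given by $T$-twisted points of the dual $B^\vee$. We approximate all of these data by data over a number field $F$ that is dense in the $p$-adic field, using weak approximation or Krasner-type continuity. By Grothendieck's theorem the finite monodromy group depends only on the inertia action on the $\ell$-torsion for some $\ell\geq 3$ (Raynaud's criterion). That action is locally constant in the $p$-adic topology, so a close enough approximation keeps the group equal to $G$. I expect this approximation step to be the main technical point. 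The fallback is to note that the twist data and the lift of $B$ from \cite{Ph4} can be taken algebraic from the start, so that the whole construction is done over a number field.

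\emph{Necessity.} Let $A$ over $K$ have finite monodromy group $G=\Phi_{A,v}$, toric rank $t$ and abelian rank $a$. Choose a finite extension $L/K$ with a place $w\mid v$ such that $A_L$ has semi-stable reduction at $w$. By part (iii) of the preceding proposition, $G$ embeds in $\Aut$ of the reduction $(A_L)_{\overline{k(w)}}$. This action comes from the descent datum on the Néron model, so it preserves the exact sequence of Lemma~\ref{lem:exactconnectedneron}, applied over $L$ after splitting $T$. On the special fiber, $G$ therefore stabilizes the reduction of $\mT^{\circ}$. That reduction is a torus of dimension $t$ because $T_L$ has good reduction by Theorem~\ref{theo:characsemistable}. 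Pulling back a polarization of $A_L$ and averaging it over $G$ gives a $G$-invariant polarization $\lambda_0$ on the reduction. Everything is defined over some finite field, so we can descend the reduction, the action and the polarization to a finite field $k'$. The reduction has dimension $t+a$, and together with the stabilized torus this gives the required $(A_0,\lambda_0)$.
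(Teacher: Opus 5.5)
Your proposal is correct and follows the paper's route. Sufficiency is Proposition~\ref{prop:caracgeom} applied with $T_1$ the given $G$-stable torus. Necessity is the faithful action of $\Phi_{A,v}$ on the reduction (property $(iii)$), together with the $G$-stability of the reduced torus coming from the Néron-model exact sequence, exactly as in the converse half of Theorem~\ref{theo:(p,t,t_0,a_0)-inertial}.

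The paper treats the corollary as immediate and never addresses the passage from the $p$-adic field of Proposition~\ref{prop:caracgeom} to a number field, so your approximation step goes beyond the paper. If you write it out, do not perturb defining equations by weak approximation over a fixed number field; instead use density of $\overline{\Q}\cap K$-points on a smooth parameter space (or Moret-Bailly). Combine this with the local constancy of the $\ell$-torsion field itself given by Theorem~\ref{theo:krasner}$(ii)$.
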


\subsubsection{} For a prime $p$ and positive integers $t$, $t'$ and $a$ we introduce the notion of $(p,t,t',a)$-inertial groups. It is a slight generalization of the notion of $(p,t,a)$-inertial groups of Silverberg and Zarhin from \cite{SZ2004}.

\begin{defin}
    Let $G$ be a finite group, $p$ a prime number and $t$, $t_0$ and $a_0$ non-negative integers not all zero. The group $G$ is said to be $(p,t,t',a)$-inertial if the following two conditions hold.
    \begin{itemize}
        \item[$(i)$] The group $G$ is isomorphic to a semi-direct product $\Gamma_p\rtimes \Z/n\Z$ with $\Gamma_p$ a $p$-group and $n$ a positive integer prime to $p$.
        \item[$(ii)$] There exists a family of injective homomorphisms indexed by the prime numbers $\ell\neq p$
        \[
        (\iota_{\ell}\colon G\hookrightarrow \GL_t(\Z)\times \GL_{t'} (\Z) \times \Sp_{2a}(\Q_{\ell}))_{\ell\neq p}
        \]
        such that the first two projections are independent of $\ell$ and the characteristic polynomials of the images of the elements of $G$ by the projections $(p_{\ell}\colon G\to \Sp_{2a'} (\Q_{\ell}))_{\ell\neq p}$ have integral coefficients independent of $\ell$.
    \end{itemize}
\end{defin}

\begin{theo}\label{theo:(p,t,t_0,a_0)-inertial}
    Let $p$ be a prime number, $t$, $t_0$ and $a_0$ be non negative integers not all zero. A finite group $G$ is $(p,t,t_0,a_0)$-inertial if and only if there is a semi-abelian variety $A$ of dimension $t+t_0+a_0$ with toric rank $t$ and abelian rank $t_0+a_0$ over a $p$-adic field $K$ with $G\simeq \Phi_A$ and such that for any field $L/K$ for which $A_L$ has semi-stable reduction, the reduction $A'$ of $A_L$ has toric rank $t+t_0$ and abelian rank $a_0$ and the action of $G$ on $A'$ stabilizes a subtorus of rank $t$. 
\end{theo}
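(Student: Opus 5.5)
We prove the two implications separately. The forward direction (from a semi-abelian variety to the inertial data) is a direct reading of the Galois action on $\ell$-adic Tate modules. The converse combines Proposition~\ref{prop:caracgeom} with the Silverberg--Zarhin characterization of $(p,t_0,a_0)$-inertial groups for abelian varieties from \cite{SZ2004}, together with the Serre--Tate and Raynaud theory of degenerating abelian varieties.

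\emph{Necessity.} Let $A$ be as in the statement, with torus $T$ of dimension $t$ and abelian quotient $B$ of dimension $t_0+a_0$, and put $G=\Phi_A$.

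1. Condition $(i)$ is part $(ii)$ of the proposition on basic properties of $\Phi_{A,v}$.

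2. The action of $I_K$ on $X^*(T)\simeq\Z^t$ factors through $G$. This gives the first homomorphism $G\to\GL_t(\Z)$, which is independent of $\ell$.

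3. Fix $L$ over which $A_L$ has semi-stable reduction. By Theorem~\ref{theo:characsemistable} and Proposition~\ref{prop:reduccomparrangs}, the reduction $B'$ of $B_L$ is an extension of an abelian variety $B_0$ of dimension $a_0$ by a torus $T_0$ of dimension $t_0$.
\begin{itemize}
\item The action of $G$ on $B'$ preserves $T_0$, because $T_0$ is the maximal torus.
\item It therefore gives a map $G\to\Aut X^*(T_0)=\GL_{t_0}(\Z)$, which is independent of $\ell$.
\item It also gives a map $G\to\Aut(B_0,\lambda_0)\subset\Sp_{2a_0}(\Q_\ell)$ via $V_\ell B_0$.
\item The characteristic polynomials on $V_\ell B_0$ are integral and independent of $\ell$, since automorphisms of an abelian variety over a finite field have characteristic polynomials in $\Z[x]$.
\end{itemize}

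4. The product map $\iota_\ell$ is injective. Its kernel acts trivially on $\Tl T_k$ and on the graded pieces of $\Tl A'$. An element of finite order acting unipotently is trivial, so by the faithfulness in part $(iii)$ of that proposition the kernel is trivial.

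\emph{Sufficiency.} Let $G$ be $(p,t,t_0,a_0)$-inertial.

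1. Write $G\to\GL_t(\Z)$ and $G\to\GL_{t_0}(\Z)\times\Sp_{2a_0}(\Q_\ell)$ for the two components of $\iota_\ell$. Let $G_2$ be the image of the second map, and $G_1$ the kernel of the projection of $G$ onto $G_2$.

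2. By construction $G_2$ is $(p,t_0,a_0)$-inertial in the sense of \cite{SZ2004}. The theorem of Silverberg--Zarhin, or Théorèmes~3.2 and~3.10 of \cite{Ph4}, then produces an abelian variety $B$ with $\Phi_B\simeq G_2$ whose semi-stable reduction has toric rank $t_0$ and abelian rank $a_0$.

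3. $G_1$ embeds in $\GL_t(\Z)$, since $\iota_\ell$ is injective and $G_1$ is exactly the part of $G$ invisible in the $G_2$-factor.

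4. Use \cite{Mau68} to realize $G$ as $\Gal(L/\Q_p^{\mathrm{un}})$. Arrange that $L^{G_1}$ is the extension trivializing the monodromy of $B$; this can be done by choosing $B$ via the twisting construction of \cite{Ph22b} relative to the given Galois extension.

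5. Twist $\G_m^t$ by $G\to\GL_t(\Z)$ to get $T$, and set $A=T\times B$. By Proposition~\ref{prop:twistorus} and the argument in the proof of Proposition~\ref{prop:caracgeom}, $K_A=L$, so $\Phi_A\simeq G$.

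6. The rank statements for any $L'$ over which $A_{L'}$ is semi-stable follow from Proposition~\ref{prop:reduccomparrangs}. The stabilized subtorus of rank $t$ is the reduction of $T$, by the exact sequence of Lemma~\ref{lem:exactconnectedneron}.

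\emph{Main obstacle.} The hard step is step 4 of sufficiency. We must make the abelian factor's monodromy field coincide with the fixed field $L^{G_1}$, so that the total monodromy is exactly $G$ rather than a fibered product. This needs the twist version of the abelian-variety realization over a prescribed Galois extension. The first thing to try is to apply Proposition~\ref{prop:caracgeom} directly: build an $A_0$ over a finite field from a torus carrying the $\GL_t\times\GL_{t_0}$ action and an abelian variety realizing the symplectic part, as Silverberg--Zarhin do.
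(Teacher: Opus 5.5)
Your necessity argument is essentially the paper's converse direction: $\Phi_A$ acts faithfully on the reduction, you split off the torus and the toric and abelian parts of the reduction of $B$, and you pass to $V_\ell B_0$. It is fine.

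\textbf{The gap is step 4 of sufficiency.} The twisting construction of \cite{Ph22b} (Proposition~\ref{prop:twistorus}) yields monodromy group $G_2$ only if you already have a suitable object in characteristic $0$. You would need an abelian variety with semi-stable reduction of type $(t_0,a_0)$ on which $G_2$ acts faithfully by automorphisms defined over the base field. A $(p,t_0,a_0)$-inertial group need not act in this way, since the inertial condition only constrains an action in characteristic $p$.

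A concrete counterexample is $Q_8$ (or $\mathrm{SL}_2(\F_3)$). It is $(2,0,1)$-inertial, as it occurs as the monodromy group of elliptic curves at $2$. Yet automorphism groups of elliptic curves in characteristic $0$ are cyclic, so no twist of an elliptic curve has monodromy $Q_8$. This is exactly why the paper realizes $Q_8$ in Lemma~\ref{lem:curveellipticdeform} by deforming Fontaine modules rather than by twisting. So, as written, you cannot force $K_B=L^{G_1}$, and then $\Phi_A=\Gal(LK_B/K^{\mathrm{un}})$ need not be $G$.

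\textbf{The repair is the paper's proof, which you mention only as a fallback.}
\begin{enumerate}
\item Realize $G'=G_2$ geometrically, by \cite{Ph7} Theorem~4.8, as a group of automorphisms of a polarized semi-abelian variety $(A'_0,\lambda'_0)$ over a finite field. This geometric input is also what your step 2 really needs, because \cite{Ph4} lifts from such an object, not from the group-theoretic condition.
\item Let $G$ act on $\G_m^t\times A'_0$ via $\iota_\ell$. This action is faithful and stabilizes $\G_m^t$.
\item Apply Proposition~\ref{prop:caracgeom}. In its proof one first fixes $L/\Q_p^{\mathrm{un}}$ with group $G$ by \cite{Mau68}. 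The deformation-theoretic lifting of \cite{Ph4}, Théorèmes~3.2 and~3.10, is then carried out relative to the prescribed extension $L^{G_1}$. This is what gives $K_B=L^{G_1}$, and then $K_A=L$ because $\ker(G\to\GL_t(\Z))\cap G_1=\ker\iota_\ell$ is trivial.
\end{enumerate}
Alternatively, you could fix $B$ first and then enlarge its monodromy field to a $G$-extension compatible with $G\to G_2$. That is an embedding problem over $K^{\mathrm{un}}$, which the realization result of \cite{Mau68} alone does not solve.
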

\begin{proof}
    Let $G$ be a $(p,t,t_0,a_0)$-inertial group. Then $G'$, its image on the factor $\GL_{t_0}(\Z)\times \Sp_{2a_0}(\Q_{\ell})$ for some $\ell\neq p$ is independent of $\ell$. The group $G'$ is $(p,t_0,a_0)$-inertial and thus it follows from \cite{Ph7} Theorem~4.8 that there is a semi-abelian variety $A'_0$ with a polarization $\lambda'_0$ such that $G'\subset \Aut (A'_0, \lambda'_0)$. Consider now the semi-abelian variety $A_0=\G_m^t\times A'_0$ with the polarization $\lambda_0=(\mathrm{id\times \lambda'_0)}$. We have an inclusion $G\subset \Aut (A_0,\lambda_0)$ by construction and the induced action stabilizes the subtorus given by the inclusion of $\G_m^t$. By Proposition~\ref{prop:caracgeom} we get the conclusion that $G$ is a finite monodromy group of a semi-abelian variety $A$ with the required properties.

    \medskip

    For the converse, consider the reduction $A'$ of the base change $A_{K_{A,s}}$ of $A$ to $K_{A,s}$, the smallest extension of $K^{\mathrm{un}}$ over which it has semi-stable reduction. The action of $G$ on $A'$ is faithful and stabilizes a subtorus of rank $t$. Let $A_0$ be the reduction of the abelian quotient of $A_{K_{A,s}}$. By the exactness of connected components of Néron models, it is a quotient of $A'$. We have that the induced action of $G$ commutes to any polarization defined over $K$ on the abelian quotient of $A_{K_{A,s}}$ so that we get an injective map
    \[
    G\longrightarrow \GL_t(\Z) \times \Aut (A_0,\lambda_0).
    \]
    As usual, we can decompose further 
    \[
    G\longrightarrow \GL_t(\Z) \times \GL_{t_0}(\Z) \times \Aut (B_0, \lambda_{B_0})
    \]
    by breaking $A_0$ in its toric and abelian exact sequence. Finally, taking $\ell\neq p$ a prime and composing with the $\ell$-adic Tate module map $\Aut(B_0,\lambda_{B_0})\to \Aut T_{\ell} B_0$ we get that $G$ is $(p,t,t_0,a_0)$-inertial.
\end{proof}

As before, the characterization follows directly.

\begin{cor}
    A finite group $G$ is the finite monodromy group of a semi-abelian variety of dimension $g$ over a $p$-adic field $K$ if and only if it is $(p,t,t_0,a_0)$-inertial for some integers $t$, $t_0$, $a_0$ such that $t+t_0+a_0=g$.
\end{cor}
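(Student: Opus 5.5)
The corollary follows directly from Theorem~\ref{theo:(p,t,t_0,a_0)-inertial}; the work is to keep track of which of the integers $t,t_0,a_0$ plays which role, and to check that semi-stable reduction over some extension actually produces a decomposition of the required shape.

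For the ``if'' direction, let $G$ be $(p,t,t_0,a_0)$-inertial with $t+t_0+a_0=g$. Theorem~\ref{theo:(p,t,t_0,a_0)-inertial} gives a semi-abelian variety $A$ over a $p$-adic field $K$ with $\Phi_A\simeq G$. Its toric rank is $t$ and its abelian rank is $t_0+a_0$, so $\dim A=t+t_0+a_0=g$. This direction requires nothing further.

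For the ``only if'' direction, let $A$ be a semi-abelian variety of dimension $g$ over $K$ with $\Phi_A\simeq G$.
\begin{enumerate}
\item Let $t$ be the toric rank of $A$, so $A$ has torus $T$ of dimension $t$ and abelian quotient $B$ of dimension $a=g-t$.
\item Choose a finite extension $L/K$ over which $A_L$ has semi-stable reduction; such an $L$ exists by the definition of $\Phi_A$.
\item By Theorem~\ref{theo:characsemistable}, the reduction $B_k$ of $B_L$ is an extension of an abelian variety of dimension $a_0$ by a torus of dimension $t_0$. Since $B_L$ has semi-stable reduction, the unipotent part of $B_k$ is trivial, so $\lambda_B=0$ and $t_0+a_0=a$. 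Hence $t+t_0+a_0=g$.
\item By Theorem~\ref{theo:characsemistable} again, the reduction $A'$ of $A_L$ has toric rank $t+t_0$ and abelian rank $a_0$.
\item As in the proof of Proposition~\ref{prop:caracgeom}, the Galois action on the Néron model respects the exact sequence of Lemma~\ref{lem:neronmodels}. So the faithful action of $G$ on $A'$ stabilizes the reduction of $T$, which is a subtorus of rank $t$.
\end{enumerate}
Thus $A$ satisfies all the hypotheses on the right-hand side of Theorem~\ref{theo:(p,t,t_0,a_0)-inertial}, and the converse direction of that theorem shows that $G$ is $(p,t,t_0,a_0)$-inertial.

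The main obstacle is the independence requirement: the theorem's condition is stated for \emph{any} field $L$ giving semi-stable reduction. I would argue it as follows. The ranks $t_0$ and $a_0$ are determined by the $I_{v,A}$-invariants of $\Tl B$ through Proposition~\ref{prop:reduccomparrangs}. Those invariants do not depend on the choice of $L$, because the inertia of every such $L$ is contained in $I_{v,A}$ and acts unipotently, so it has the same invariants. The remaining degenerate case, where $t$, $t_0$ and $a_0$ are all zero, means $g=0$; it is excluded implicitly, and I would remark on this briefly.
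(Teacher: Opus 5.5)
Your proposal is correct and follows the paper's route: the paper just says the corollary follows directly from Theorem~\ref{theo:(p,t,t_0,a_0)-inertial} in both directions, and your steps (taking $t$ to be the toric rank, reading off $t_0,a_0$ from the semi-stable reduction of $B$ via Theorem~\ref{theo:characsemistable}, and using the reduction of $T$ as the $G$-stable subtorus) only make explicit that every semi-abelian variety satisfies the right-hand side of that theorem. Your independence-of-$L$ argument also holds, since in characteristic $0$ a unipotent operator and any nonzero power of it have the same fixed space, so every finite-index subgroup of $I_{v,A}$ has the same invariants on $\Tl B$.
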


\subsection{The semi-stability degree for semi-abelian varieties} \label{sub:semi-stabdeg}

\subsubsection{} Let us first state the semi-stable reduction theorem for semi-abelian varieties.

\begin{theo} \label{theo:reducsemistable}
    Let $A$ be a semi-abelian variety over a number field $K$. Then, there is a finite extension $L/K$ such that $A_L$ has semi-stable reduction. 

    \medskip

    Furthermore, we have
    \[
    d(A)=\min \{[L:K] \mid A_L \text{ has semi-stable reduction}\}= \underset{v\in \Sigma_K}{\operatorname{lcm}} \Card \Phi_{A,v}.
    \]
\end{theo}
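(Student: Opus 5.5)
The plan is to reduce to the local statements already established, place by place, and then glue the local data with a global construction of field extensions with prescribed local behaviour. First, existence. The semi-abelian variety $A$ is an extension of an abelian variety $B$ by a torus $T$. By Grothendieck's semi-stable reduction theorem, $B$ acquires semi-stable reduction over some finite extension. The torus $T$ splits over a finite extension, and a split torus has good reduction everywhere. Taking the compositum $L$ and applying Theorem~\ref{theo:characsemistable} $(ii)\Rightarrow(i)$ at every place shows that $A_L$ has semi-stable reduction. Only finitely many places matter: outside the finite set $\Sigma_0$ of places where $B$ has bad reduction or $T$ is ramified, we have $\Phi_{A,v}=\{1\}$.

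Second, the lower bound $d(A)\geq \operatorname{lcm}_v \Card\Phi_{A,v}$. Let $L/K$ be any extension such that $A_L$ has semi-stable reduction, and fix a place $w\mid v$. By Definition~\ref{def:monodfinie} and criterion $(iii)'$ of Theorem~\ref{theo:characsemistable}, the inertia group $I_w$ is contained in $I_{v,A}$. Hence $I_v/I_w$ surjects onto $\Phi_{A,v}$, so $\Card\Phi_{A,v}$ divides the ramification index $e(w\mid v)$, which divides $[L_w:K_v]$ and so is at most $[L:K]$. Since this holds for every $v$, and $[L:K]=\sum_{w\mid v}[L_w:K_v]$, it is tempting to conclude divisibility outright. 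But a sum of multiples of $\Card \Phi_{A,v}$ is again a multiple of it, so each $\Card\Phi_{A,v}$ divides $[L:K]$. Therefore the lcm divides $[L:K]$, which gives the lower bound.

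Third, and this is the main obstacle, the upper bound: we must construct a global $L$ of degree exactly $N=\operatorname{lcm}_{v}\Card\Phi_{A,v}$ that realizes semi-stability at all places simultaneously. Locally, for $v\in\Sigma_0$, the field $K_{v,A}$ is a Galois extension of $K_v^{\mathrm{un}}$ with group $\Phi_{A,v}$. It descends to a finite extension $F_v/K_v$ of degree $\Card\Phi_{A,v}$, totally ramified, after which $A$ becomes semi-stable; the descent is possible by a standard argument on the tame quotient together with the $p$-part, as in \cite{Phi22a}. Adjoin an unramified extension of degree $N/\Card\Phi_{A,v}$ to get a local extension $E_v/K_v$ of degree $N$. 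Then apply Krasner's lemma and weak approximation to a monic polynomial of degree $N$ over $K$: choose its coefficients close to those of a defining polynomial of $E_v$ for each $v\in\Sigma_0$, and irreducible modulo some auxiliary place. The stalk field $L=K[x]/(f)$ then satisfies $L\otimes K_v\simeq E_v$ for $v\in\Sigma_0$. I expect this gluing to be the delicate step. One must check that semi-stability is insensitive to the extra unramified part, which holds because inertia is unchanged. At places outside $\Sigma_0$ nothing is needed, since semi-stability is preserved by any base change there: $\Phi_{A,v}$ is trivial and $I_w\subset I_v=I_{v,A}$. This gives $[L:K]=N$ with $A_L$ semi-stable, and so $d(A)=N$. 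The construction follows the abelian case in \cite{Phi22a} essentially verbatim.
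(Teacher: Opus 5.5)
Your proposal is correct and follows the paper's proof. The lower bound comes from $\Card\Phi_{A,v}\mid e(w\mid v)$ summed over the places $w\mid v$. The upper bound comes from descending $K_{v,A}$ to a totally ramified $F_v/K_v$ (Lemma~2.1 of \cite{Phi22a}), adding an unramified extension to reach degree $N$, and globalizing; your Krasner-plus-weak-approximation argument for the last step is what the paper cites as Proposition~2.3 of \cite{Phi22a}.

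The one step you leave vague, the descent, rests on the normality of $I_{v,A}$ in $G_{K_v}$ recorded in the paper. The closed subgroup generated by $I_{v,A}$ and any Frobenius lift meets $I_v$ exactly in $I_{v,A}$, and its fixed field is the required $F_v$.
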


\begin{proof}
    Let $S\subset \Sigma_K$ be the finite set of places of bad reduction of $A$. For $v\in S$ the extension $K_{v,A}$ of $K_v^{\mathrm{un}}$ from Definition~\ref{def:monodfinie} is finite of degree $\Card \Phi_{A,v}$. It descends to a totally ramified extension, that we denote again by $K_{v,A}$, of the same degree of $K_v$ by Lemma~2.1 of \cite{Phi22a}. Let $K'_{v,A}$ be the extension of $K_{v,A}$ obtained by compositum with the unique unramified extension of $K_v$ such that 
    \[[K'_{v,A}:K_v]=\underset{v\in \Sigma_K}{\operatorname{lcm}} \Card \Phi_{A,v}=d.
    \] By Proposition~2.3 of \textit{loc. cit.} there is a finite extension $L/K$ of degree $d$ such that for all places $v\in S$ there is a unique place $w\in \Sigma_L$ above $v$ with a $K_v$-isomorphism of fields $L_w\simeq K'_{v,A}$. By definition $A_L$ has good reduction outside of the set $S'\subset \Sigma_L$ of places above $S$ and, for $w\in S',$ we have by construction
    \[ \Phi_{A_L,w}=\{1\}\]
    so that $A_L$ has semi-stable reduction. We thus have $d(A)\leq d$.

    \medskip

    Let $F/K$ be a finite extension such that $A_F$ has semi-stable reduction. For a place $w\in\Sigma_F$ above $v\in S$ we must then have $F_w^{\mathrm{un}}\supset K_{v,A}$ so that $F_w^{\mathrm{un}}/K_v^{\mathrm{un}}$ is an extension of degree divisible by $\Card \Phi_{A,v}$. It follows that $\Card \Phi_{A,v}$ divides $[F:K]$ and thus $d(A)\geq d$.
\end{proof}

We can now define the degree of stability for semi-abelian varieties with given toric and abelian rank.

\begin{defin} \label{def:degdgta}
    Let $A$ be a semi-abelian variety over a number or local field $K$. We call
    \[d(A)= \min \{ [L:K] \mid A_L \text{ is semi-stable} \}
    \]
    the degree of stability of $A$. 
    
    Further, let $g$ be a positive integer, $t$ and $a$ be non-negative integers with $g=t+a$. We define
    \[
    d_g(t,a)=\max \{ d(A) \mid A \text{ semi-abelian with toric rank }t \text{ and abelian rank } a\}.
    \]
\end{defin}

Let us quickly note that $d_g(t,a)$ is well-defined and give a bound on its value which we will show to be optimal. 

\begin{lem} \label{lem:naivbounddgta}
    Let $A$ be a semi-abelian variety over a number field $K$ with toric rank $t$ and abelian rank $a$. Then we have
    \[
    d(A)\leq d_t(t,0)\cdot d_a(0,a).
    \]
    In particular for $g=t+a$ we have
    \[
    d_g(t,a)\leq d_t(t,0)\cdot d_a(0,a).
    \]
\end{lem}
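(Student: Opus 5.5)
We argue place by place, using Theorem~\ref{theo:reducsemistable}, which expresses $d(A)$ as the least common multiple over $v\in\Sigma_K$ of $\Card \Phi_{A,v}$. Write $0\to T\to A\to B\to 0$ with $\dim T=t$ and $\dim B=a$. By the lemma defining $I_{v,A}$, we have $I_{v,A}=I_{v,B}\cap I_{v,T}$, where $I_{v,T}$ is the kernel of the inertia action on $\Tl T$ and $I_{v,B}$ is the subgroup for the abelian variety $B$. Hence the natural map
\[
\Phi_{A,v}=I_v/I_{v,A}\longrightarrow I_v/I_{v,T}\times I_v/I_{v,B}=\Phi_{T,v}\times\Phi_{B,v}
\]
is injective. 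It follows that $\Card\Phi_{A,v}$ divides $\Card\Phi_{T,v}\cdot\Card\Phi_{B,v}$, and that $\Phi_{A,v}$ is the Galois group of the compositum of the two extensions $K_{v,T}$ and $K_{v,B}$ of $K_v^{\mathrm{un}}$.

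Next we bound each factor by the corresponding global degree. Applying Theorem~\ref{theo:reducsemistable} to $T$, viewed as a semi-abelian variety of toric rank $t$ and abelian rank $0$, gives $\Card\Phi_{T,v}\mid d(T)$. Since $d(T)\le d_t(t,0)$ by definition, we get in particular $\Card\Phi_{T,v}\le d_t(t,0)$. In the same way, $\Card\Phi_{B,v}\mid d(B)$ and $d(B)\le d_a(0,a)$.

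We then pass from the local divisibilities to the global bound. Here only divisibility is useful, since an lcm of numbers each at most $N$ need not be at most $N$. Set $D_T=\operatorname{lcm}_v\Card\Phi_{T,v}=d(T)$ and $D_B=d(B)$. Then each $\Card\Phi_{A,v}$ divides $\Card\Phi_{T,v}\Card\Phi_{B,v}$, which in turn divides $D_TD_B$. Taking the lcm over $v$,
\[
d(A)=\operatorname{lcm}_v\Card\Phi_{A,v}\ \Big|\ d(T)\,d(B)\le d_t(t,0)\cdot d_a(0,a).
\]
A more direct alternative avoids local groups altogether. Choose $L/K$ of degree $d(T)$ with $T_L$ semi-stable, which here means $T_L$ has good reduction. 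Then choose $L'/L$ of degree $d(B_L)\le d_a(0,a)$ with $B_{L'}$ semi-stable. Good reduction of $T$ is preserved under further extension, so $A_{L'}$ is semi-stable by criterion $(ii)$ of Theorem~\ref{theo:characsemistable}. This gives $d(A)\le[L':K]\le d_t(t,0)\,d_a(0,a)$. This second argument is cleaner, and I would use it as the main proof.

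Finally, the bound on $d_g(t,a)$ follows by taking the maximum over all $A$, once we know $d_t(t,0)$ and $d_a(0,a)$ are finite. This finiteness is the main point needing care. For $d_a(0,a)$ it is the known finiteness for abelian varieties from \cite{Phi22a}. For tori, $\Phi_{T,v}$ embeds into a finite subgroup of $\GL_t(\Z)$, and such subgroups have bounded order by Minkowski's theorem. Hence $d(T)$, being an lcm of divisors of orders of finite subgroups of $\GL_t(\Z)$, is bounded, for instance by the lcm of all those orders. This shows that $d_t(t,0)$, and therefore $d_g(t,a)$, is well defined.
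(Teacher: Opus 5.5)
Your proposal is correct, and the argument you choose as the main proof is essentially the paper's. The paper takes extensions $L_1$, $L_2$ of $K$ of degrees $d(T)$ and $d(B)$ over which $T$, resp.\ $B$, becomes semi-stable, and applies criterion $(ii)$ of Theorem~\ref{theo:characsemistable} to the compositum $L_1L_2$; your tower $L'/L/K$ is the same idea. Your first, local argument through $\Phi_{A,v}\hookrightarrow\Phi_{T,v}\times\Phi_{B,v}$ and Theorem~\ref{theo:reducsemistable} is also valid and even gives the divisibility $d(A)\mid d(T)\,d(B)$. Your finiteness remarks supply the well-definedness of $d_t(t,0)$ and $d_a(0,a)$, which the paper leaves implicit.
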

\begin{proof}
    Let $T$ be the maximal subtorus of $A$ and $B$ the abelian quotient $A/T$. Let $L_1$ and $L_2$ be the extensions of $K$ of degree $d(T)$ and $d(B)$ respectively such that $T_{L_1}$ and $B_{L_2}$ have semi-stable reduction. By Theorem~\ref{theo:characsemistable}, the semi-abelian variety $A_{L_1L_2}$ has semi-stable reduction and thus
    \[
    d(A)\leq d(T)\cdot d(B) \leq  d_t(t,0)\cdot d_a(0,a)
    \]
    by definition.
\end{proof}

In order to compute the value of $d_g(t,a)$ we first have to deal with the analogous computation for tori, in essence that is to compute the value $d_g(g,0)$. We will do this by using the Galois twisting construction.

\subsubsection{} By definition, a torus $T$ of dimension $t$ over a field $K$ is a $\overline{K}$-twist of $\G_m^t$. It is equivalently given by a continuous map $\rho_T\colon G_K\to \Aut \G_m^t=\GL_t(\Z)$ which corresponds to the action of $G_K$ on the character lattice $X(T)$ of $T$. Following Theorem~4.3 of \cite{SZ2004} and its generalization to number fields in \cite{Ph22b}, we can compute the effect of twisting on the places of good reduction of a torus. We deal with the general case of a semi-abelian variety. 

\begin{prop} \label{prop:twistorus}
    Let $A$ be a semi-abelian variety over a number field $K$ and $v\in \Sigma_K$ a finite place. Let $L/K$ be a finite Galois extension and 
    \[
    \iota\colon \Gal (L/K) \longrightarrow \Aut_K A
    \]
    be an injective homomorphism. If $A$ has semi-stable reduction at $v$ then the twist $A'$ of $A$ corresponding to $\iota$ is such that
    \[
    \Phi_{A',v}=I_v(L/K).
    \]
    In particular, for a torus $T$ given by a map
    \[
    \rho_T\colon G_K \longrightarrow \GL_t(\Z),
    \]
    we have $\Phi_{T,v}=I_v(\overline{K}^{\Ker \rho_T}/K)\subset G_K/\Ker \rho_T$.
\end{prop}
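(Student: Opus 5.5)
The plan is to compute the inertia action on $\Tl A'$ directly and then read off $\Phi_{A',v}$ from the lemma defining $I_{v,A'}$. I will work over $K_v^{\mathrm{un}}$ with a fixed embedding $\overline{K}\hookrightarrow\overline{K_v}$. Then $I_v$ acts on $L$ through its image $I_v(L/K)\subset \Gal(L/K)$.

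First I write down the twist explicitly. The twist $A'$ corresponds to the cocycle $\sigma\mapsto \iota(\bar\sigma)$, with $\bar\sigma$ the image of $\sigma$ in $\Gal(L/K)$. This is a cocycle, in fact a homomorphism, because $\Aut_K A$ consists of $K$-automorphisms on which $G_K$ acts trivially. There is an $L$-isomorphism $A'_L\simeq A_L$, and under it the Galois action on $\Tl A'$ is
\[\rho_{A',\ell}(\sigma)=\iota(\bar\sigma)\circ\rho_{A,\ell}(\sigma).\]
Here $\iota(\bar\sigma)$ acts on $\Tl A$ by functoriality. Since $\iota(\bar\sigma)$ is defined over $K$, it commutes with $\rho_{A,\ell}(\sigma)$.

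Next I restrict to inertia. By Theorem~\ref{theo:characsemistable}, the semi-stability of $A$ at $v$ means that $\rho_{A,\ell}(I_v)$ is unipotent. So for $\sigma\in I_v$ the element $\rho_{A',\ell}(\sigma)$ is a product of the commuting elements $\iota(\bar\sigma)$ and $u_\sigma$, with $u_\sigma$ unipotent. The element $\iota(\bar\sigma)$ has finite order; on $\Tl A\otimes\Q_\ell$ it is semisimple, being of finite order in characteristic $0$. Hence this product is the Jordan decomposition of $\rho_{A',\ell}(\sigma)$. Consequently, for an open subgroup $I'\subset I_v$, the action of $I'$ on $\Tl A'$ is unipotent if and only if $\iota(\bar\sigma)$ acts trivially on $\Tl A$ for all $\sigma\in I'$.

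The key point is that an automorphism of $A$ acting trivially on $\Tl A$ is the identity. For the toric part this holds because $\GL_t(\Z)\to\GL_t(\Z_\ell)$ is injective. For the whole semi-abelian variety it follows from the density of $\ell$-power torsion points in $A$. Since $\iota$ is injective, the condition becomes $\bar\sigma=1$ for all $\sigma\in I'$, that is, $I'\subset I_v\cap G_L$. By the lemma characterising $I_{v,A'}$, this gives $I_{v,A'}=I_v\cap G_L$, and hence
\[\Phi_{A',v}=I_v/(I_v\cap G_L)\simeq I_v(L/K).\]

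For the torus statement I take $A=\G_m^t$, which has good reduction everywhere. I take $L=\overline{K}^{\Ker\rho_T}$ and let $\iota$ be the injection $\Gal(L/K)\hookrightarrow\GL_t(\Z)=\Aut_K\G_m^t$ induced by $\rho_T$; the twist is then $T$. The step I expect to need the most care is the Jordan-decomposition argument, and specifically checking the commutation of $\iota(\bar\sigma)$ with $\rho_{A,\ell}(\sigma)$ and the exact sign and side conventions of the twisting cocycle. A wrong convention only replaces $\iota$ by an inverse or conjugate and does not change the kernel. So the conclusion is robust.
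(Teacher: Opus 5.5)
Your proposal is correct and follows the paper's route. You identify $\rho_{A',\ell}(\sigma)$ with $\iota(\bar\sigma)\circ\rho_{A,\ell}(\sigma)$ via the twisting isomorphism, use that $\rho_{A,\ell}(I_v)$ is unipotent, and conclude that $I_{v,A'}$ is the kernel of $\iota$ restricted to $I_v$; your Jordan-decomposition argument and the faithfulness of $\Aut A$ on $\Tl A$ just make explicit what the paper compresses into ``since it has finite order''.
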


\begin{proof}
    Let $\varphi\colon A_L\to A'_L$ be the isomorphism such that $\sigma(\varphi)^{-1}\circ \varphi=\iota(\sigma)$ for every $\sigma\in \Gal(L/K)$. For clarity we denote by $\iota$ again the map $G_K\to \Gal(L/K) \to \Aut_K A$. For $\sigma\in I_v$ and $\ell\neq \operatorname{char}k(v)$ we have  
    \begin{align*}    
    \rho_{A',\ell}(\sigma)&= \sigma(\varphi)^{-1}\circ \rho_{A,\ell} (\sigma) \circ \varphi \\
    &= \iota(\sigma)\circ \varphi^{-1} \rho_{A,\ell} (\sigma) \circ \varphi.
    \end{align*}
    Now, as we have
    \[
    I_{v,A'}=\{ \sigma\in I_v \mid \rho_{A',\ell}(\sigma) \text{ is unipotent} \}
    \]
    we see that $\rho_{A',\ell}(\sigma)$ is unipotent if and only if $\iota(\sigma)$ is trivial since it has finite order.

    \medskip
    
    It follows that
    \[
    \Phi_{A',v}= I_v/I_{v,A'}= I_v/\Ker \iota_{| I_v}= I_v(L/K).
    \]    
\end{proof}

In order to use the previous result to build tori with maximal degree of stability we need to consider the maximal subgroups of $\GL_t(\Z)$ for $t\geq 1$. The list of these groups was given by Feit in \cite{Fei96} whose proof relied on unfinished work of Weisfeiler. In \cite{Ré20} Rémond gives a complete proof of the order of the maximal finite subgroups of $\GL_t(K)$ for cyclotomic fields and $\Q$ in particular.

\medskip

Recall that $\beta\colon \N\to \Q$ is defined by $\beta(2)=3/2$, $\beta(4)=3$, $\beta(6)=9/4$, $\beta(7)=9/2$, $\beta(8)=135/2$, $\beta(9)=15/2$, $\beta(10)=9/4$ and $\beta(n)=1$ for all other values of $n\in \N$. 

\begin{lem} \label{lem:boundnaivtor}
    Let $T$ be a torus of dimension $t$ over a number field $K$. Then we have
    \[
    d(T)\leq \beta(t)2^t t!.
    \]
\end{lem}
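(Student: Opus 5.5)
By Theorem~\ref{theo:reducsemistable}, $d(T)=\operatorname{lcm}_{v}\Card\Phi_{T,v}$, so the task is to bound this least common multiple by $\beta(t)2^tt!$. The proof proceeds in three steps.

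\textbf{Step 1: all local monodromy groups sit in one finite subgroup of $\GL_t(\Z)$.} Let $\rho_T\colon G_K\to\GL_t(\Z)$ be the map giving the Galois action on the character lattice $X(T)$. Its image $H=\rho_T(G_K)\cong\Gal(\overline{K}^{\Ker\rho_T}/K)$ is a finite subgroup of $\GL_t(\Z)$. By Proposition~\ref{prop:twistorus}, for every finite place $v$ we have
\[
\Phi_{T,v}=I_v\bigl(\overline{K}^{\Ker \rho_T}/K\bigr).
\]
This is a subgroup of $H$. Hence $\Card\Phi_{T,v}$ divides $\Card H$ for every $v$, and therefore
\[
d(T)=\underset{v}{\operatorname{lcm}}\,\Card\Phi_{T,v}\ \Big|\ \Card H .
\]

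\textbf{Step 2: bound $\Card H$ by a maximal finite subgroup.} It remains to show $\Card H\le \beta(t)2^tt!$. Every finite subgroup of $\GL_t(\Z)$ is contained in a maximal finite one. By Feit's classification \cite{Fei96}, made unconditional for the order by Rémond \cite{Ré20}, the largest order of a finite subgroup of $\GL_t(\Z)$ is $2^tt!$ for all $t$ except the listed exceptions. For those exceptions the maximum comes from the Weyl groups of root lattices:
\begin{itemize}
\item $t=2$: $W(G_2)$, of order $12=\tfrac32\cdot 8$;
\item $t=4$: $W(F_4)$, of order $1152=3\cdot 2^4\cdot 4!$;
\item $t=6$: $W(E_6)\times\{\pm1\}$, of order $103680=\tfrac94\cdot 2^6\cdot 6!$;
\item $t=7$: $W(E_7)$, of order $2903040=\tfrac92\cdot 2^7\cdot 7!$;
\item $t=8$: $W(E_8)$, of order $696729600=\tfrac{135}{2}\cdot 2^8\cdot 8!$;
\item $t=9$ and $t=10$: the products $W(E_8)\times W(A_1)$ and $W(E_8)\times W(G_2)$, respectively.
\end{itemize}
These numbers match the definition of $\beta$, which gives the required bound.

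\textbf{Main obstacle.} The only substantive input is the external classification of maximal finite subgroups of $\GL_t(\Z)$. The work here lies in checking that the normalization by $\beta$ reproduces exactly Feit's maximal orders, in particular the small-dimensional exceptional cases $t=9,10$. Everything else follows formally from Step~1.
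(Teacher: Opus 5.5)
Your proof is correct and follows the paper's argument. The paper likewise combines Theorem~\ref{theo:reducsemistable} with Proposition~\ref{prop:twistorus} to get $d(T)\mid \Card(G_K/\Ker\rho_T)$, then invokes R\'emond's Th\'eor\`eme~7.1 for the maximal order of a finite subgroup of $\GL_t(\Z)$. Your explicit check of the exceptional orders against $\beta$, including $\tfrac{15}{2}$ for $t=9$ and $\tfrac94$ for $t=10$, is accurate and simply unpacks that citation.
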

\begin{proof}
    By Theorem~\ref{theo:reducsemistable} and by Proposition~\ref{prop:twistorus} we have
    \[
    d(T)=\underset{v\in \Sigma_K}{\operatorname{lcm}} \Card \Phi_{T,v} \mid \Card (G_K/\Ker \rho_T).
    \]
    But since $G_K/\Ker \rho_T$ is a finite subgroup of $\GL_t(\Z)$ the bound follows by Théorème~7.1 of \cite{Ré20}. 
\end{proof}

In order to use Proposition~\ref{prop:twistorus} to build a torus $T$ of dimension $t$ with $d(T)=\beta(t)2^t t!$ we need to solve the Grunwald problem for the list of groups given by Feit. The list is given in Table~\ref{tab:groups_t}. Note that it is claimed that for each dimension the isomorphism class of maximal order finite subgroups is unique. Let $t\geq 1$ and let us denote by $G_t$ a maximal order subgroup of $\GL_t(\Z)$. Then we have $G_t\simeq \Z/2\Z \wr \mfS_t$ for all values of $t$ such that $\beta(t)=1$. For the exceptional cases the groups $G_t$ are described by means of Weyl groups acting on Dynkin diagrams -- see \cite{Bou03} for more on this subject. The only fact we will use is that these groups are generated by pseudo-reflections as matrix groups.
\begin{table}[h]
    \centering
    \renewcommand{\arraystretch}{1.2} 
    \begin{tabular}{cl}
        \toprule
        \textbf{Parameter} $t$ & \textbf{Group} $G_t$ \\
        \midrule
        2 & $G_2 \simeq W(G_2) \simeq D_6$ \\
        4 & $G_4 \simeq W(F_4)$ \\
        6 & $G_6 \simeq W(E_6) \times \Z/2\Z$ \\
        7 & $G_7 \simeq W(E_7)$ \\
        8 & $G_8 \simeq W(E_8)$ \\
        9 & $G_9 \simeq W(E_8) \times \Z/2\Z$ \\
        10 & $G_{10} \simeq W(E_8) \times W(G_2) \simeq W(E_8) \times D_6$ \\
        \bottomrule
    \end{tabular}
    \caption{List of the isomorphism classes of the groups $G_t$ for corresponding values of $t$.}
    \label{tab:groups_t}
\end{table}

\begin{theo}
    Let $t\geq 1$ be an integer. Let $G\subset \GL_t(\Z)$ be a maximal order subgroup. Then $G$ admits a generic Galois extension. 
\end{theo}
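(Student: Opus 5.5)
The plan is to reduce to the classical fact that a finite group generated by pseudo-reflections in $\GL_t(\Q)$ has a polynomial ring of invariants. This is the Chevalley--Shephard--Todd theorem, applied over $\Q$ in characteristic $0$. Such a ring yields a generic Galois extension via Noether's problem. Recall that a finite group $G$ admits a generic Galois extension over $\Q$ as soon as there is a faithful linear representation $V$ of $G$ over $\Q$ whose invariant field $\Q(V)^G$ is rational, i.e.\ purely transcendental over $\Q$. By Kemper--Mattig / Saltman, the extension $\Q(V)/\Q(V)^G$ is then a generic $G$-extension. More precisely, one takes $\Q[V]^G=\Q[f_1,\dots,f_t]$. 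Then $\Q(V)$ is Galois over $\Q(f_1,\dots,f_t)$ with group $G$, and one localizes at a suitable discriminant to obtain a generic polynomial. Hilbert irreducibility then lets us specialize it.

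The key steps, in order, are as follows. First, I identify the groups case by case using Table~\ref{tab:groups_t}. For $t$ with $\beta(t)=1$ the group is $G_t\simeq \Z/2\Z\wr\mfS_t=W(B_t)$, the hyperoctahedral group acting on $\Z^t$ by signed permutations. This is a reflection group: it is generated by the sign changes $e_i\mapsto -e_i$ and the transpositions. Its invariant ring in $\Q[x_1,\dots,x_t]$ is the polynomial ring generated by the elementary symmetric functions in $x_1^2,\dots,x_t^2$; this case can even be checked directly. For the exceptional $t$, the groups $W(G_2)$, $W(F_4)$, $W(E_7)$ and $W(E_8)$ are Weyl groups acting on their root lattices tensored with $\Q$, hence generated by reflections. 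The groups $W(E_6)\times\Z/2\Z$, $W(E_8)\times\Z/2\Z$ and $W(E_8)\times W(G_2)$ act on orthogonal direct sums of lattices, where $\Z/2\Z$ acts by $-1$ on a rank-one summand. In dimension $9$, for instance, this is $E_8\oplus\Z$ with $-1$ on $\Z$, which is a reflection, so the product is again a reflection group. For $t=6$, I expect $-1$ to act on $E_6$ itself, i.e.\ $W(E_6)\times\{\pm1\}\subset\GL_6(\Z)$, and this is not a reflection group since $-1\notin W(E_6)$.

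That case, $t=6$, is the main obstacle. There I would use the other available route: Noether's problem does not need a reflection representation, only some faithful rational representation with rational invariant field. Let $G=W(E_6)\times\{\pm1\}$ act on $V=E_6\otimes\Q\oplus\Q$, with $-1$ acting diagonally on both summands and $W(E_6)$ acting on the first only. This representation is faithful. The subgroup $W(E_6)$ has polynomial invariants $\Q[f_2,f_5,f_6,f_8,f_9,f_{12}]\otimes\Q[y]$. The residual $\Z/2\Z$ acts on these by sign according to degree parity, and on $y$ by $-1$. This is a linear action of $\Z/2\Z$ on a rational function field of transcendence degree $7$. It is diagonal after substituting $f_5\mapsto f_5/y$, $f_9\mapsto f_9/y$, which become invariant. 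So the invariant field is $\Q(y^2,f_2,f_5/y,f_6,f_8,f_9/y,f_{12})$, which is rational. Alternatively, one can note that generic extensions are stable under direct products with $\Z/2\Z$, which avoids the computation entirely; I would cite this (DeMeyer / Jensen--Ledet--Yui) as the cleanest argument. The same product argument also covers $t=9$ and $t=10$ if one prefers it there.

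Finally, I would conclude for all $t$. The existence of a generic extension over $\Q$ gives generic extensions over any number field $K$ by base change. By Saltman's theorem, it also gives solutions to the Grunwald problem for $G_t$: one can prescribe local extensions at finitely many places outside a bounded set, and over $\Q$ at all places. This is the form that will be used afterwards with Proposition~\ref{prop:twistorus}.
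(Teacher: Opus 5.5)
Your proposal is correct and follows essentially the paper's route: Shephard--Todd gives polynomial, hence rational, invariant fields for the reflection groups; Saltman's criterion (Theorem~5.1 of \cite{Sal82}) turns this into a generic extension; and stability of generic extensions under direct products, which the paper invokes via Proposition~5.1.6 of \cite{JLY02} for $t=6,9,10$, handles $W(E_6)\times\Z/2\Z$. The only differences are that you treat $\Z/2\Z\wr\mfS_t=W(B_t)$ by the same reflection-group argument instead of citing Proposition~3.8 of \cite{Ph22b}, and that you explicitly observe that $G_6$ is not a reflection group inside $\GL_6(\Z)$, which is exactly why the product step is needed in that case.
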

\begin{proof}
    For all values of $t$ such that $\beta(t)=1$ we have that such a maximal group is isomorphic to the wreath product $\Z/2\Z\wr \mfS_t$ given by permutation matrices with coefficients in $\{\pm 1\}$. By Proposition~3.8 of \cite{Ph22b} the result follows for these cases. The remaining exceptional cases follow from Theorem~5.1 of \cite{Sal82}, with the added use of Proposition~5.1.6 of \cite{JLY02} for $t=6$, $9$ and $10$, by the Shephard-Todd theorem, for instance given in \cite{Bou03} section V.5. 
    \end{proof}

We can thus state the Grunwald type result we want.

\begin{prop} \label{prop:gruntori}
    Let $G$ be a maximal order subgroup of $\GL_t(\Z)$ and $k$ a number field. Then for a finite set $S\subset \Sigma_k$ of places given with a set $\{M_v/k_v^{\mathrm{un}}\}_{v\in S}$ of finite local extensions of the maximal unramified extensions at those places there is a finite extension $K/k$ and a finite Galois extension $L/K$ of group $G$ such that the following statements hold.
    \begin{itemize}
        \item[$(i)$] For every place $v\in S$ there is a unique place of $K$ above $v$ and it is unramified.
        \item[$(ii)$] For a place $w\in \Sigma_L$ of $L$ above a place $v\in S$ of residue characteristic $p$, the inertia group $I_w(L/K)$ is a $p$-Sylow of $G$ and the local extension $L_w^{\mathrm{un}}/k_v^{\mathrm{un}}$ is linearly disjoint from $M_v$.
    \end{itemize}
\end{prop}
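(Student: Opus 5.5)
The plan is to combine the generic Galois extension for $G$ given by the preceding theorem with a local-global specialization argument of Hilbert irreducibility/Krasner type. This follows the strategy of Proposition~3.8 and the Grunwald-type results of \cite{Ph22b}.

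\textbf{Local data.} First, for each $v\in S$ of residue characteristic $p$, let $P$ be a $p$-Sylow subgroup of $G$. The group $P$ is a $p$-group, hence a finite ramification group at $p$. By the result of \cite{Mau68} already used in the proof of Proposition~\ref{prop:caracgeom}, I construct a totally ramified Galois extension $N_v/k_v^{\mathrm{un}}$ with group $P$. Since there are infinitely many such extensions and $M_v$ has only finitely many subextensions, I can choose $N_v$ linearly disjoint from $M_v$. For example, take $N_v$ with ramification breaks or discriminant exceeding anything occurring inside $M_v$, so that $N_v\cap M_v=k_v^{\mathrm{un}}$; after checking that no nontrivial quotient of $N_v$ embeds in $M_v$, this gives linear disjointness. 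Using Lemma~2.1 of \cite{Phi22a}, I descend $N_v$ to a Galois extension $N_v^0$ of a finite unramified extension $k_v'$ of $k_v$, with Galois group $P$ and totally ramified, so that its inertia group is all of $P$.

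\textbf{Globalization.} Next I take the generic Galois extension of $G$ over $k$, with parameter variety an open subset $U$ of affine space. Genericity means that every $G$-Galois algebra over any field containing $k$ is a specialization at some point of $U$. Let $K/k$ be a finite extension of degree $[k_v':k_v]$ at each $v\in S$, with a unique place above each $v$ whose completion is $k_v'$. Such a $K$ exists by weak approximation, via Proposition~2.3 of \cite{Phi22a}, and it gives $(i)$. Over $k_v'$, the induced $G$-algebra $\operatorname{Ind}_P^G N_v^0$ is a specialization at some point $u_v\in U(k_v')$. By Krasner's lemma, or more simply by continuity of the fibers of a finite étale map in the $v$-adic topology, every point of a small neighbourhood of $u_v$ yields an isomorphic $G$-algebra. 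Since the parameter space is rational, weak approximation on affine space combined with Hilbert irreducibility over $K$ (the Hilbert subset is dense in $\prod_{v\in S}U(K_v)$ for finitely many places) gives a point $u\in U(K)$ lying in all these neighbourhoods. Its fiber is a field $L/K$, Galois with group $G$. At each place $w$ above $v$, the completion $L_w$ equals $N_v^0$, so $I_w(L/K)=P$ is a $p$-Sylow subgroup. Moreover, $L_w^{\mathrm{un}}=N_v$ is disjoint from $M_v$, which gives $(ii)$.

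\textbf{Main obstacle.} I expect the hard step to be the local construction: producing $N_v$ with group exactly a full $p$-Sylow subgroup, totally ramified, and linearly disjoint from the prescribed $M_v$. Maus's theorem gives existence, but disjointness needs an argument that infinitely many pairwise "independent" realizations exist, for instance by twisting with a varying cyclic degree-$p$ ramified character at the Frattini quotient level. The compatibility of Hilbert irreducibility with approximation at finitely many places is standard (Ekedahl/Serre) and causes no difficulty.
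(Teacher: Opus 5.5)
Your proposal is correct and follows essentially the paper's route. That route is the Grunwald argument of Proposition~3.8 of \cite{Ph22b}: a generic extension, local constancy of specializations, and Hilbert irreducibility with weak approximation. As the paper indicates, disjointness from $M_v$ comes from the freeness of the maximal pro-$p$ quotient of $\Gal(\overline{k_v}/k_v^{\mathrm{un}})$: choose the Frattini quotient of $N_v$ so that none of its degree-$p$ subfields lies in $M_v$ (there are only finitely many such subfields of $M_v$), then lift it to a $P$-extension. Your ``large discriminant'' variant would not suffice on its own, but large upper ramification breaks would.

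One small repair is needed: take a common unramified degree, e.g.\ the lcm of the $[k'_v:k_v]$, before building $K$. A unique place above every $v\in S$ forces $[K:k]=[K_w:k_v]$ for all $v$, so differing $[k'_v:k_v]$ cannot all be realized.
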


The proof is essentially the same as the proof of Proposition~3.8 of \cite{Ph22b} and is omitted. The added restriction of avoiding a fixed set of local extensions is harmless by the fact that the maximal pro-$p$-extension of $k_v^{\mathrm{un}}$ has a free pro-$p$ Galois group of countable rank -- see for example Lemma~3.5 of \textit{loc. cit.}

We can finally compute the value of $d_g(t,a)$ for all integers $g\geq 1$, $t$, and $a$ such that $g=t+a$.

\begin{theo}\label{theo:dgta}
    For all integers $g\geq 1$, $t\geq 0$, and $a\geq 0$ such that $g=t+a$ we have 
    \[d_g(t,a)= \beta(t)2^tt!\cdot M(2a).
    \]
\end{theo}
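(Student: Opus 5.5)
The proof has two halves: an upper bound and a matching construction. For the upper bound, Lemma~\ref{lem:naivbounddgta} gives $d_g(t,a)\leq d_t(t,0)\cdot d_a(0,a)$. Lemma~\ref{lem:boundnaivtor} gives $d_t(t,0)\leq \beta(t)2^tt!$. The known abelian case (\cite{Phi22a}) gives $d_a(0,a)=M(2a)$. Combining these, $d_g(t,a)\leq \beta(t)2^tt!\cdot M(2a)$.

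For the lower bound, I will take $A=T\times B$, where $T$ is a torus of dimension $t$ and $B$ is an abelian variety of dimension $a$, both over a common number field $K$, with $d(A)=d(T)\cdot d(B)$. Since a product is semi-stable exactly when both factors are (Theorem~\ref{theo:characsemistable}), the finite monodromy group of $A$ at $v$ is the image of $I_v$ in $\Phi_{T,v}\times \Phi_{B,v}$. Its order is $\Card\Phi_{T,v}\cdot\Card\Phi_{B,v}$ precisely when the corresponding extensions $K_{v,T}$ and $K_{v,B}$ of $K_v^{\mathrm{un}}$ are linearly disjoint. By Theorem~\ref{theo:reducsemistable}, $d(A)$ is the lcm of these orders. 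It is therefore enough to arrange the following: for each prime $p$ dividing $\beta(t)2^tt!\,M(2a)$ there is a place $v\mid p$ where $\Phi_{T,v}$ is a $p$-Sylow of $G_t$, $\Phi_{B,v}$ has $p$-part of order $p^{r(2a,p)}$, and the two local extensions are disjoint.

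I will build $B$ first. Following \cite{Phi22a}, I choose a number field $k$ and an abelian variety $B$ over $k$ with $d(B)=M(2a)$. Concretely, I fix a finite set $S$ of places of $k$, one above each relevant $p$, such that at each $v\in S$ the group $\Phi_{B,v}$ has maximal $p$-part. I also include the places above primes that divide only $\beta(t)2^tt!$; there $B$ may even have semi-stable reduction.

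Next I apply Proposition~\ref{prop:gruntori} to $G=G_t$, with $M_v=k_{v,B}$ for $v\in S$. This produces $K/k$, unramified with a single place above each $v\in S$, and a $G_t$-extension $L/K$ whose inertia at $v$ is a $p$-Sylow of $G_t$ and is linearly disjoint from $M_v$. Let $T$ be the twist of $\G_m^t$ by $G_t\subset\GL_t(\Z)$ via $L$. By Proposition~\ref{prop:twistorus}, $\Phi_{T,v}=I_v(L/K)$.

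Since $K/k$ is unramified at $S$, we have $\Phi_{B_K,v}=\Phi_{B,v}$. The disjointness then gives $\Card\Phi_{A,v}=\Card\Phi_{T,v}\cdot\Card\Phi_{B,v}$. Taking the lcm over $v\in S$, the $p$-parts multiply to exactly $\beta(t)2^tt!\cdot M(2a)$, because the $p$-part of $|G_t|$ is realized by a $p$-Sylow at the place above $p$.

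The main obstacle is the compatibility at each single place $v$. Both factors must attain their maximal $p$-parts at the same place. Linear disjointness of $p$-extensions is a condition on inertia, and it has to survive the base change $K/k$. This step rests on the freeness argument quoted after Proposition~\ref{prop:gruntori}, which I would use to choose $I_v(L/K)$ avoiding $M_v$. If that fails, the fallback is to realize the torus and abelian contributions at different places above the same $p$. That only gives an lcm rather than a product, so it is not enough on its own. Hence I would insist on the single-place disjointness version.
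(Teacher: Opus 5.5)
Your proposal is correct and follows the paper's proof essentially step by step. The upper bound comes from Lemma~\ref{lem:naivbounddgta}, Lemma~\ref{lem:boundnaivtor} and the abelian case $d_a(0,a)=M(2a)$. The lower bound uses $A=T\times B$, with $T$ the twist of $\G_m^t$ by a maximal-order finite subgroup of $\GL_t(\Z)$, obtained from Proposition~\ref{prop:gruntori} so that its inertia is a $p$-Sylow and is linearly disjoint from $k_{B,v}$.

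You explicitly require the maximal $p$-part of $\Phi_{B,v}$ and the $p$-Sylow of $G_t$ to be realized at the same place above $p$. The paper uses this implicitly when it writes $\operatorname{lcm}_v \Card\Phi_{A,v}=d(T)\cdot d(B)$, and it is guaranteed by the known constructions of $B$, which realize the $p$-part of $M(2a)$ at a place above $p$.
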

\begin{proof}
    Let $g\geq 1$, $t\geq 0$ and $a\geq 0$ be integers such that $g=t+a$. Let $B$ be an abelian variety of dimension $a$ over a number field $k$ such that $d(B)=M(2a)$, which exists by Theorem~1.2 of \cite{Ph4}. If $t=0$, then $B$ suits our needs.

    \medskip
    
    Otherwise, $t\geq 1$ and let $G$ be a maximal order subgroup of $\GL_t(\Z)$. Let $S$ be the set of places of $k$ containing all the places above the prime divisors of $\Card G$ and of $M(2a)$. Let $\{M_v/k_v^{\mathrm{un}}\}_{v\in S}$ be the set of local extensions $\{k_{B,v}/k_v^{\mathrm{un}}\}_{v\in S}$ as defined by Definition~\ref{def:monodfinie}. Let $L/K$ be extensions of $k$ with the properties $(i)$ and $(ii)$ of Proposition~\ref{prop:gruntori} with regard to the set $S$ and the local extensions $\{M_v\}_{v\in S}$ and the group $G$.

    \medskip

    Now the torus $T$ over $K$ defined as the $L/K$-twist of $\G_m^t$ by the injective homomorphism
    \[
    \iota\colon \Gal (L/K) \hookrightarrow \GL_t(\Z)
    \]
    is such that for a place $v$ of $K$ above $p\in S$ the group $\Phi_{T,v}$ is isomorphic to a $p$-Sylow of $G$. It follows that
    \[
    d(T)=\underset{v\in \Sigma_K}{\operatorname{lcm}} \Card \Phi_{T,v} = \Card G= \beta(t)2^tt!.
    \]
    Consider the semi-abelian variety $A=T\times B$ over $K$. The base-change $A_F$ for a finite extension $F/K$ has semi-stable reduction at a place $w\in \Sigma_F$ if and only if $B_F$ and $T_F$ have semi-stable reduction at that place. Let us fix $v\in S$. Since by construction the extensions $k_{B,v}$ and $k_{T,v}$ are linearly disjoint for all places $v\in S$ it follows that $k_{A,v}$, the compositum $k_{B,v}k_{T,v}$, is of degree
    \[
    \Card \Phi_{A,v}=\Card \Phi_{T,v}\cdot \Card \Phi_{B,v}
    \]
    over $k_v^{\mathrm{un}}$. Hence we get
    \[
    d(A)=\underset{v\in \Sigma_K}{\operatorname{lcm}} \Card \Phi_{A,v}=d(T)\cdot d(B)=\beta(t)2^tt!\cdot M(2a).
    \]
    
    \medskip

    In order to conclude for the value of $d_g(t,a)$ remark that by the bound of Lemma~\ref{lem:naivbounddgta} we have a double inequality
    \[
    d(A)\leq d_g(t,a) \leq d_t(t,0)\cdot M(2a). 
    \]
    Since $d_t(t,0)\leq \beta(t)2^tt!$ by Lemma~\ref{lem:boundnaivtor}, both inequalities are equalities. 
\end{proof}

\begin{rem}
    In contrast to the case of abelian varieties if we replace $d_g(t,a)$ by using the lowest common multiple instead of a maximum  
    \[
    d'_g(t,a)=\operatorname{lcm} \{ d(A) \mid A \text{ semi-abelian with toric rank }t \text{ and abelian rank } a\}.
    \]
    we get a different result. To be precise, we have
    \[
    d'_g(t,a)=M(t)\cdot M(2a)
    \]
    where $\beta(t)2^tt!\mid M(t)$. This follows easily by adapting the Galois twisting construction used in this section to construct tori with the $p$-Sylow subgroups of the groups used by Minkowski to compute $M(t)$ as finite monodromy groups.

    \medskip

    The value $d'_g(t,a)$ has also another meaning. Let $\mA(t,a)$ be the set of isomorphism classes $[A]$ of abelian varieties of dimension $g$ over number fields such that if $v$ is a place of bad reduction then the semi-stable toric rank of $A$ at $v$ is at least $t$. Then we have
    \[
    d'_g(t,a)=\max \{d(A) \mid [A] \in \mA(t,a)\}.
    \]
\end{rem}

\subsection{An ad hoc construction at $2$} \label{sub:adhocabvar}

\subsubsection{} We provide a new construction of an abelian variety with maximal finite monodromy at $2$. This variety is shown to be isomorphic to a Jacobian in section~\ref{sub:curvesconstr}.

\medskip

We start by a construction by deformation in the spirit of \cite{Ph4} but only in dimension $1$. Let $E_0/\F_4$ be a supersingular elliptic curve with an embedding $Q_8\subset \Aut E_0$. The associated Fontaine $\varphi$-module $(D,\varphi)$ is simple of rank $2$ with slope $1/2$ and is polarized.

\begin{lem} \label{lem:curveellipticdeform}
    Let $K$ be a $2$-adic field whose residue field contains $\F_4$ and such that there is a totally ramified Galois extension $L/K$ of group $Q_8$. Then there is a finite unramified extension $K'$ of $K$ with an elliptic curve $E/K'$ such that $\Phi_E\simeq Q_8$, $E_{LK'}$ has good reduction and the reduction of $E_{LK'}$ is geometrically isogenous to $E_0$.
\end{lem}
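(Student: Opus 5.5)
The plan is to build $E$ by deforming $E_0$ with a $Q_8$-equivariant descent datum, in the spirit of \cite{Ph4} and of Proposition~\ref{prop:caracgeom}, rather than by twisting. Write $G=\Gal(L/K)\simeq Q_8$ and fix the embedding $\iota\colon Q_8\hookrightarrow \Aut E_0$. The aim is a formal (then algebraic) elliptic curve $\mathcal E$ over $\OO_{L'}$, with $L'=LK'$, whose special fiber is $E_0$ up to a finite extension of $\F_4$. On $\mathcal E$ we want a semilinear action of $G$ such that each $\sigma\in G$ acts on the special fiber by $\iota(\sigma)$. Galois descent along $L'/K'$ then produces $E/K'$ with $E_{L'}$ having good reduction with reduction $E_0$. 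The inertia $I_{K'}$ acts on the reduction through $G$ via $\iota$. Since $\iota$ is faithful and the reduction map $\Phi_E\hookrightarrow \Aut$ of the reduction (part (iii) of the earlier proposition on properties of $\Phi_{A,v}$) is compatible with this action, we get $\Phi_E\simeq G\simeq Q_8$. The key point is that $L/K$ is totally ramified, so $G$ is the full inertia of $L'/K'$.

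For the construction itself I will work with Breuil--Kisin or Fontaine-type linear algebra as in \cite{Ph4} Théorème~3.2. Choose a lift of the $\varphi$-module $(D,\varphi)$ to a filtered $(\varphi,N=0,\Gal(L'/K'))$-module of rank $2$. Here $D$ carries the $G$-action induced by $\iota$ through the crystalline cohomology of $E_0$. The Hodge filtration on $D_{L'}$ is a line that must be stable under the semilinear $G$-action, and must make the module weakly admissible and polarized. Since $D$ is simple of slope $1/2$, every line gives a weakly admissible module, because there is no $\varphi$-stable sub to test. The polarization condition on a rank $2$ symplectic module is automatic for a line. So the only real constraint is $G$-stability. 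By Hilbert 90 for the semilinear action, the twisted $G$-action on $D_{L'}$ descends to a $K'$-form $D_{K'}$ of dimension $2$. Any $K'$-rational line in it gives a $G$-stable filtration on $D_{L'}$.

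By Colmez--Fontaine we then obtain a potentially crystalline $2$-dimensional representation of $G_{K'}$ becoming crystalline over $L'$ with Hodge--Tate weights $\{0,1\}$. Breuil's classification, or rather Grothendieck--Messing/Serre--Tate deformation theory with the descent datum, then gives a $2$-divisible group over $\OO_{L'}$ with $G$-descent datum lifting $E_0[2^\infty]$. Serre--Tate gives a formal elliptic curve, which algebraizes since elliptic curves are polarized. The passage to a finite unramified $K'$ is needed so that Frobenius on $D$ matches the $\F_4$-structure (after replacing $E_0$ by a model over a larger finite field up to isogeny). This is why the statement only asserts "geometrically isogenous".

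The main obstacle is the step from the filtered module to an actual abelian scheme at $p=2$ with a \emph{compatible} descent datum. At $p=2$ the equivalence between lattices and $2$-divisible groups is delicate, and the ramification of $L/K$ is wild, so the Fontaine--Laffaille range fails. I would handle this by invoking Kisin's classification of crystalline lattices by Breuil--Kisin modules, valid for $p=2$ by Kim, Lau and Liu, over $\OO_{L'}$ with its $G$-action. This requires showing that the $G$-stable lattice can be chosen; averaging is not possible since $2\mid\Card G$. Instead I would use that $\Aut E_0\supset Q_8$ preserves the Dieudonné lattice of $E_0$ itself, so the lattice is automatically $G$-stable. The statement allows an isogeny, which absorbs any lattice mismatch. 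Finally, faithfulness of inertia follows from the diagram in the proof of the earlier proposition.
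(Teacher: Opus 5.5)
Your proposal is correct and is essentially the paper's proof. The paper likewise descends the twisted semilinear $Q_8$-action on $D_L$ to the $K$-form $W=(D_L)^{\delta(Q_8)}$ and takes a $K$-rational line as a $Q_8$-stable filtration, which is automatically Lagrangian and admissible because $D$ is simple of slope $1/2$. It then quotes Proposition~3.5 of \cite{Ph4} with Serre--Tate for the realization step and Theorem~3.9 of \cite{Ph4} for $\Phi_E\simeq Q_8$; these are exactly the steps you unpack via Kisin--Kim--Lau--Liu and the faithfulness of $\iota$ on the reduction.

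The one slip is your lattice discussion. No averaging and no appeal to the Dieudonné lattice of $E_0$ is needed, since the compact group $G_{K'}$ stabilizes some $\Z_2$-lattice in any continuous representation. The resulting $2$-divisible group then has special fiber only isogenous to $E_0[2^\infty]$. So you run Serre--Tate on an isogenous curve $E_0'$, to which the $Q_8$-action transports integrally, and this is where the isogeny in the statement comes from.
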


\begin{proof}
    Let $K_0$ be the maximal unramified subextension of $K$. Consider the diagonal action $\delta$ on $D_{L}=D\otimes_{K_0} L$ by $\Q_8$ given by $\rho(h)\otimes h^{-1}$ for $h\in Q_8$. 

    \medskip

    The fixed space $W=(D_L)^{\delta(Q_8)}$ is a $K$-vector space with a natural isomorphism from $W_L$ to $D_L$. Let $F_0\subset W$ be a line and set $F=F_0\otimes_K L\subset D_L$. The line $F$ is Lagrangian in $D_L$ and the filtered module $(D,F)$ is admissible. By base change to the maximal unramified extension of $K$ we are in the situation to apply Proposition~3.5 of \cite{Ph4}. By Serre-Tate's deformation theorem, we get the desired curve $E$ over a finite unramified extension of $K$. Note that the descent datum induces the original faithful action of $Q_8$ on the Fontaine $\varphi$-module of the reduction. We are thus in the situation of Theorem~3.9 of \cite{Ph4} so that $\Phi_E\simeq Q_8$.   
\end{proof}

\subsubsection{} \label{subsub:ellcurves} In order to build an abelian variety we first need to setup the situation to apply Lemma~\ref{lem:curveellipticdeform} many times.

\medskip

Let $n\geq 0$, the $2$-Sylow of $\mathfrak{S}_n$ admits a decomposition as a direct product of iterated wreath product following the binary expansion of $n$ in the following way. Let us write $n=\sum_{i=1}^r 2^{k_i}$ with $k_1> k_2> \dots > k_r\geq  0$. Let $W_k$ be the group defined by induction with $W_0=\{1\}$ and $W_{k+1}=W_k\wr \Z/2\Z $ for $k\geq 0$. Then it is classical, see \cite{Kal48}, that a $2$-Sylow of $\mfS_n$ is isomorphic to the direct product
\[
P_n=W_{k_1} \times W_{k_2} \times \cdots \times W_{k_r}.
\]
The $2$-Sylow subgroups of the wreath product $Q_8\wr \mfS_n$ thus have a similar decomposition as $Q_8\wr P_n$. By the main result of \cite{Mau68}, choose a finite extension $K$ of $\Q_2$, whose residue field contains $\F_4$, such that there is a totally ramified extension $L/K$ with $\Gal(L/K)\simeq G_n$, the $2$-Sylow of $Q_8\wr \mfS_n=Q_8\wr P_n$. Consider the decomposition $\sqcup_{j=1}^r O_j$ of $\{1,\dots, n\}$ in orbits under $P_n$. For each $j\in \{1,\dots, r\}$, choose $\alpha_j\in O_j$ and define
\[
H_j=\operatorname{Stab}_{G_n}(\alpha_j)=Q_8^n\rtimes \operatorname{Stab}_{P_n}(\alpha_j).
\]
The coordinate map $\pi_j\colon (h_1,\dots, h_n, \sigma)\mapsto h_{\alpha_j}$ is surjective with kernel $N_j$. 

\medskip

For $j\in \{1,\dots, r\}$, let $K_j=L^{H_j}$ and $L_j=L^{N_j}$. By definition, $[K_j:K]=\Card O_j$ and $\Gal(L_j/K_j)\simeq H_j/N_j\simeq Q_8$. Lemma~\ref{lem:curveellipticdeform} applied with $L_j/K_j$ provides a curve $E_j$ over an unramified extension $K'_j$ of $K_j$. Choosing a sufficiently large unramified extension $K'$ of $K$ over which we base change all curves and extensions, we can consider, up to renaming, the curves to be over $K_j$, such that their base change to $L_j$ has good reduction and $\Phi_{E_j}\simeq Q_8$. 

\subsubsection{} We can now build our abelian variety by using Weil restriction of scalars. Let us succinctly recall its properties. For a finite separable extension $L/K$ of degree $d$ and an abelian variety $A/L$ of dimension $g$, the Weil restriction $\operatorname{Res}_{L/K}(A)$ is an abelian variety of dimension $dg$, which verifies
\[
{\operatorname{Res}_{L/K}(A)}_{K^s} \simeq \prod_{\tau\colon L\to K^s} A^\tau.
\]
The Weil restriction commutes with duality and sends principal polarizations to principal polarizations. Moreover, for every prime $\ell \neq \operatorname{char}K$, we have an isomorphism of Galois modules
\[
V_{\ell}( \operatorname{Res}_{L/K}(A))\simeq \operatorname{Ind}_{G_L}^{G_K} V_{\ell}(A).
\]

\begin{prop} \label{prop:adhocabvar}
    The abelian variety 
    \[
    A=\prod_{j=1}^r \operatorname{Res}_{K_j/K} (E_j)
    \]
    is principally polarized, of dimension $n$, has good reduction over $L$ and we have
    \[
    \Phi_{A}=G_n.
    \]
    In particular, we have
    \[
    v_2(d(A))=v_2(M(2n)).
    \]
\end{prop}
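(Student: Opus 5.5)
The first three claims are formal consequences of the properties of Weil restriction. Since $[K_j:K]=\Card O_j$ and $\sum_j \Card O_j=n$, we get $\dim A=\sum_j \Card O_j=n$. Each $E_j$ is principally polarized, Weil restriction preserves principal polarizations, and so does a product. For good reduction over $L$, note that $L$ is Galois over $K$ and contains $L_j\supset K_j$. Hence over $L$ we have $A_L\simeq\prod_j\prod_{\tau\colon K_j\to L}E_j^{\tau}$. Every conjugate $E_j^\tau$ acquires good reduction over $\tau(L_j)\subset L$, because $E_j$ does over $L_j$ by \S\ref{subsub:ellcurves}.

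Next we identify $\Phi_A$ through the Tate module. We have
\[
V_\ell A\simeq\bigoplus_{j=1}^r\operatorname{Ind}_{G_{K_j}}^{G_K}V_\ell E_j .
\]
Since $L/K$ is totally ramified, we may work over $K^{\mathrm{un}}$ and identify inertia groups with Galois groups. By Lemma~\ref{lem:curveellipticdeform}, the action of $G_{K_j}$ on $V_\ell E_j$ factors through $\Gal(L_j/K_j)\simeq H_j/N_j\simeq Q_8$ up to a unipotent part; in fact it is trivial up to unipotence, since the reduction is good. The finite part is the faithful $2$-dimensional representation of $Q_8$. The kernel of $\operatorname{Ind}_{H_j}^{G_n}(\rho_j\circ\pi_j)$ is the core $\bigcap_{g}gN_jg^{-1}$. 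The conjugates of $N_j$ are the kernels of the coordinate maps $h\mapsto h_\alpha$ for $\alpha\in O_j$. Intersecting over all $j$ therefore gives $\{(h,\sigma)\mid h=1,\ \sigma\text{ acts trivially on }\{1,\dots,n\}\}=\{1\}$, because $P_n\subset\mfS_n$ acts faithfully. So the finite image of $I_K$ is $G_n$. Equivalently, $I_{K,A}=I_L$, because an element acts unipotently exactly when it lies in the kernel of this finite representation. This gives $\Phi_A=\Gal(L/K)=G_n$. To make this rigorous, I will use the description of $I_{v,A}$ as the set of elements acting unipotently, together with the fact that the semisimplification of the induced representation restricted to $I_L$ is trivial.

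Finally, $d(A)=\Card\Phi_A=\Card G_n$ for this local field. Here $G_n$ is a $2$-Sylow of $Q_8\wr\mfS_n$, so
\[
v_2(\Card G_n)=3n+v_2(n!).
\]
By Legendre's formula, $r(2n,2)=\sum_{i\geq0}\lfloor 2n/2^i\rfloor=2n+n+\lfloor n/2\rfloor+\dots=3n+v_2(n!)$. The two valuations therefore agree.

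The main obstacle is the core computation in the second step. It requires checking carefully that the representation of $G_{K_j}$ on $V_\ell E_j$ is precisely inflation from $H_j\to H_j/N_j\simeq Q_8$. This needs the identification $\Phi_{E_j}\simeq Q_8$ with the extension $L_j/K_j$ itself, and not merely an abstract isomorphism. I would get this from the construction in Lemma~\ref{lem:curveellipticdeform}, where the descent datum is given by $\Gal(L_j/K_j)$. I would also check the faithfulness of $P_n$ on the orbit union.
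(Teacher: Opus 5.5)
Your proposal is correct and follows the paper's proof. Both arguments show that $V_\ell(A)$ restricted to inertia is $\bigoplus_j \operatorname{Ind}_{H_j}^{G_n}(\rho_j\circ\pi_j)$, that this representation is faithful, and that good reduction over $L$ then gives $\Phi_A\simeq G_n$; your normal-core computation and your Legendre count spell out the ``direct check'' and the final $2$-adic valuation, which the paper leaves unstated.

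The point you flag as the main obstacle is in fact immediate. Since $E_j$ has good reduction over the totally ramified extension $L_j/K_j$, the group $\Phi_{E_j}$ is a quotient of $\Gal(L_j/K_j)\simeq Q_8$. As $\Phi_{E_j}$ also has order $8$, that quotient map is an isomorphism, so $\rho_j$ is exactly the faithful inflation through $H_j\to H_j/N_j$ that you need.
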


\begin{proof}
    By construction we have
    \[
    \dim A= \sum_{j=1}^r [K_j:K]= n.
    \]
    For fixed $j$, the principal polarization on $E_j$ induces one on the corresponding factor of $A$ and the product one on $A$ is again principal.  

    \medskip

    To compute the finite monodromy group of $A$, let us fix an odd prime $\ell$ and consider for $j\in \{1,\dots,r \}$ the faithful representation
    \[
    \rho_j\colon Q_8\longrightarrow \operatorname{GL}(V_{\ell}(E_j)).
    \]
    By restriction to the quotient of inertia groups $G_n=\Gal(L/K)$, we have
    \[
    V_{\ell}(A)_{\mid I_K} \simeq \bigoplus_{j=1}^r \operatorname{Ind}_{H_j}^{G_g}(\rho_j\circ \pi_j).
    \]
    It is a direct check that this representation is faithful. Since $A_L$ has good reduction we have, as desired
    \[
    \Phi_{A} \simeq G_n.
    \]
\end{proof}

\section{Finite monodromy groups in degenerating families}

In this section, we show that for a degenerating abelian scheme over a $p$-adic field, that is a group scheme $A/S$ with generic fiber an abelian variety, the finite monodromy groups are upper-semi-continuous for the $p$-adic topology. More precisely, the finite monodromy groups of the fibers at closed points where $A$ degenerates are quotients of the finite monodromy groups of the points in a small enough neighborhood. The main technical result to deduce this is a variant of Krasner's lemma.

\subsection{Krasner's lemma for quasi-finite and flat covers}

\subsubsection{} Let $K$ be a complete field with discrete valuation of rank $1$. We start by reviewing some classical properties of morphisms for the induced map on $K$-points. 

\begin{lem} \label{lem:finiteclosed} Let $f\colon X\to Y$ be a finite map of $K$-schemes. Then the induced map of topological spaces $f(K)\colon X(K)\to Y(K)$ is closed.
\end{lem}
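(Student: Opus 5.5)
We reduce to the affine case and then use that a finite morphism is cut out by monic polynomial equations, so that the fibres of $f(K)$ over a bounded set of $Y(K)$ are uniformly bounded. Closedness is local on $Y$: if $Y=\bigcup V_i$ is a finite affine open cover, then the sets $V_i(K)$ are open in $Y(K)$, and $f^{-1}(V_i)$ is affine since $f$ is finite. A subset of $Y(K)$ is closed as soon as its intersection with each $V_i(K)$ is closed in $V_i(K)$. So we may assume $Y=\Spec B$ and $X=\Spec C$ with $C$ a finite $B$-algebra. We then choose generators $c_1,\dots,c_m$ of $C$ as a $B$-module. Each $c_k$ is integral over $B$, so it satisfies a monic equation
\[
P_k(c_k)=c_k^{d_k}+b_{k,1}c_k^{d_k-1}+\dots+b_{k,d_k}=0 \quad \text{with } b_{k,i}\in B .
\]
The generators give a closed immersion $X\hookrightarrow \mathbf{A}^m_Y$ over $Y$. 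Since $\mathbf{A}^m_Y(K)=Y(K)\times K^m$ has the product topology and closed immersions induce closed embeddings on $K$-points, it suffices to show that the projection $Y(K)\times K^m\to Y(K)$ is closed when restricted to the closed set $X(K)$.

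The key estimate is a uniform bound on roots of monic polynomials. Let $y\in Y(K)$ and $x\in X(K)$ with $f(x)=y$, and set $z_k=c_k(x)\in K$. Then $z_k$ is a root of the monic polynomial $P_k^y(T)=T^{d_k}+b_{k,1}(y)T^{d_k-1}+\dots$. The ultrametric inequality then gives
\[
|z_k|\leq \max\bigl(1,\max_i |b_{k,i}(y)|\bigr).
\]
The functions $b_{k,i}$ are continuous on $Y(K)$, so over a closed ball $V$ around a point of $Y(K)$ they are bounded. Hence $f(K)^{-1}(V)$ lies in $V\times D$, where $D\subset K^m$ is a closed polydisc.

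Now let $Z\subset X(K)$ be closed and let $y$ be a limit of a sequence $f(x_n)$ with $x_n\in Z$. We may use sequences since $Y(K)$ is metrizable. For large $n$ the points $f(x_n)$ lie in a closed ball around $y$, so the $x_n$ lie in the bounded set $V\times D$. If $K$ is locally compact (for instance a $p$-adic field, the case needed later), $V\times D$ is compact. We extract a convergent subsequence $x_n\to x$; then $x\in Z$ because $Z$ is closed, and $f(x)=y$ by continuity, so $f(Z)$ is closed.

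The main obstacle is a general complete $K$ whose residue field is not finite, where compactness fails. There I would argue directly with root continuity instead. Consider the coordinate $z_k^{(n)}$ of $x_n$. It is a root of $P_k^{f(x_n)}$, and the coefficients of these polynomials converge to those of $P_k^{y}$, which is monic with roots in $\overline K$. Continuity of roots of monic polynomials (the Krasner-type estimate $\prod_j|z-\alpha_j|=|P(z)|$) shows that $z_k^{(n)}$ approaches the finite set of roots of $P_k^y$. Since each coordinate takes values in only finitely many accumulation candidates, we can pass to a subsequence along which every coordinate converges. The limit lies in $K^m$ because $K$ is complete, and it lies in $X(K)\cap \overline{Z}=Z$, which gives $y\in f(Z)$. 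Justifying this root-continuity extraction carefully is the step that needs the most work.
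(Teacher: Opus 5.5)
Your proposal is correct. The paper gives no argument of its own for this lemma. It records Lemma~\ref{lem:finiteclosed}, together with Lemma~\ref{lem:etopen}, as a standard fact about the analytic topology and points to Poonen's book (Proposition~3.5.73, cited right after Lemma~\ref{lem:etopen}).

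Your route is a self-contained, elementary proof. You reduce to $Y$ affine and embed $X$ in $\mathbf{A}^m_Y$ via module generators. The monic integral equations then give two things: fibres over bounded sets are uniformly bounded, and any sequence in $X(K)$ whose image converges has a convergent subsequence. This is slightly stronger than closedness.

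The main gain is generality. Your compactness branch only covers local $K$. But Section~3.1 states the lemma for an arbitrary complete discretely valued $K$, and the first half of Theorem~\ref{theo:krasner}$(i)$ uses it over the finite extension $L$ without any local compactness. Your root-continuity branch covers exactly that case, so the compactness branch is redundant.

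The step you flagged as needing the most work is in fact short:
\begin{itemize}
\item The $z^{(n)}_k$ are bounded by your ultrametric estimate, and $b_{k,i}(f(x_n))\to b_{k,i}(y)$. Hence $|P^y_k(z^{(n)}_k)|\to 0$.
\item Write $|P^y_k(z)|=\prod_{j=1}^{d_k}|z-\alpha_j|$ in a splitting field, using the unique extension of the absolute value. Then some root satisfies $|z^{(n)}_k-\alpha_{j(n)}|\le |P^y_k(z^{(n)}_k)|^{1/d_k}$.
\item Some index $j_0$ occurs infinitely often, which gives a subsequence converging to $\alpha_{j_0}$.
\item That root lies in $K$, because a complete $K$ is closed in any finite normed extension.
\item Repeat this successively for the $m$ coordinates to obtain the required convergent subsequence of the $x_n$.
\end{itemize}
The essential input is monicity, which guarantees the degree does not drop at the limit.
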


\begin{lem} \label{lem:etopen}
    Let $f\colon X\to Y$ be an étale map of $K$-schemes. Then the induced map of topological spaces $f(K)\colon X(K)\to Y(K)$ is open.
\end{lem}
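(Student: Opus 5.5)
The plan is to reduce the statement to the analytic inverse function theorem over $K$, working locally on $X(K)$. Openness is local, so it suffices to take a point $x\in X(K)$ with $y=f(x)$ and a neighbourhood $U$ of $x$ in $X(K)$, and to show that $f(U)$ contains a neighbourhood of $y$ in $Y(K)$. Since $K$ is complete with respect to a rank one valuation, the analytic topology on $X(K)$ and $Y(K)$ is defined by choosing affine open charts and closed embeddings into affine spaces. This topology does not depend on these choices, and it is compatible with open immersions.

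The first step uses the local structure theorem for étale morphisms (EGA IV 18.4.6, or the standard presentation). Zariski locally around $x$ and $y$, with $Y=\Spec A$, we may write
\[
X\supset V=\Spec \bigl(A[t_1,\dots,t_n]/(g_1,\dots,g_n)\bigr)_h ,
\]
where the Jacobian determinant $\det(\partial g_i/\partial t_j)$ is invertible on $V$. Concretely, $V(K)$ is the open subset of $Y(K)\times K^n$ cut out by the equations $g_i=0$ and the condition $h\neq 0$. The Zariski open $V(K)\subset X(K)$ is open in the analytic topology, so we may assume $U\subset V(K)$.

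The second step is to apply the $K$-analytic implicit function theorem. This is valid over any complete non-archimedean field, proved by Newton's method or the contraction mapping principle (Serre, Lie Algebras and Lie Groups, Part II). Locally $Y(K)$ sits in some $K^m$, and the $g_i$ have coefficients that are polynomial functions on $Y$, which we lift to polynomials on $K^m$ near $y$. Write $x=(y,\tau)$ with $\tau\in K^n$. Since the Jacobian in the $t$ variables is invertible at $x$, there are neighbourhoods $W$ of $y$ in $K^m$ and $W'$ of $\tau$ in $K^n$, and an analytic map $s\colon W\to W'$, such that for $z\in W$ the unique solution in $W'$ of $g(z,t)=0$ is $t=s(z)$. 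Restricting to $z\in W\cap Y(K)$ and shrinking so that $h(z,s(z))\neq 0$, we obtain a continuous section $z\mapsto (z,s(z))$ of $f(K)$ into $V(K)$ defined near $y$ and sending $y$ to $x$. Shrinking $W$ further by continuity, we may assume the section lands in $U$, which gives $f(U)\supset W\cap Y(K)$.

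The main obstacle is to justify applying the implicit function theorem when $Y$ is not smooth. The equations $g_i$ are only defined on $Y$, not on the ambient $K^m$. The fix is to lift the coefficients to polynomials on $K^m$ and solve the system over the ambient space, where the ambient Jacobian is still invertible near $x$. The solution obtained for $z\in Y(K)$ then automatically satisfies the original equations on $Y$. Everything else is formal, namely the compatibility of the analytic topology with open immersions and closed embeddings.
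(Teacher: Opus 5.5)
Your proof is correct. You reduce via the local structure theorem to $V=\Spec\bigl(A[t_1,\dots,t_n]/(g_1,\dots,g_n)\bigr)_h$, lift the $g_i$ to the ambient $K^m\times K^n$, and apply the non-archimedean implicit function theorem there to get a continuous local section of $f(K)$ through $x$. The paper gives no proof of its own and simply cites Poonen's Proposition~3.5.73, which, as far as I recall, rests on this same local-structure-plus-implicit-function-theorem argument. One slip: $V(K)$ is a locally closed, not open, subset of $Y(K)\times K^n$, but that is all your argument actually uses.
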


See \cite{Po17} Proposition~3.5.73 for a proof.

\medskip

We need one last lemma about the smooth locus of rational points. 

\begin{lem} \label{lem:smoothK}
    Let $Y$ be a reduced $K$-scheme of finite type. Let $V\subset Y$ be a Zariski open and dense subscheme. Then we have that the $K$-rational points in the closure of the smooth locus of $Y$ are in the closure of $V(K)$ in $Y(K)$, that is
    \[
    \overline{Y^{\mathrm{sm}}(K)}\subset \overline{V(K)}.
    \]
\end{lem}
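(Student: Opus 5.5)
The plan is to reduce to the local structure of smooth points, where $K$-analytic geometry looks like an open polydisc, and then use density. First I reduce to $Y$ irreducible. Replacing $Y$ by its irreducible components $Y_1,\dots,Y_m$ (with reduced structure), a point $y\in Y^{\mathrm{sm}}(K)$ lies on exactly one component $Y_i$, since smooth points are normal and hence unibranch. Near $y$, $Y$ coincides with $Y_i$ as a scheme. Moreover $V\cap Y_i$ is open and dense in $Y_i$, because $V$ is dense in $Y$. So it suffices to show, for irreducible reduced $Y$ of dimension $d$ and a point $y\in Y^{\mathrm{sm}}(K)$, that $y\in\overline{V(K)}$. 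Since closures are closed, this gives $\overline{Y^{\mathrm{sm}}(K)}\subset\overline{V(K)}$.

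Next, the local structure at a smooth point. There is a Zariski open neighbourhood $U\ni y$ and an étale morphism $\pi\colon U\to\mathbf{A}^d_K$. By Lemma~\ref{lem:etopen}, $\pi(K)$ is open. I also want it to be a local homeomorphism at $y$. This follows from the $K$-analytic inverse function theorem, since $d\pi_y$ is an isomorphism and $K$ is complete. Concretely, Hensel's lemma lets one solve the étale equations uniquely near $\pi(y)$. So there are open neighbourhoods $\Omega\ni y$ in $U(K)$ and $D\ni\pi(y)$ in $K^d$, a polydisc, with $\pi\colon\Omega\to D$ a homeomorphism.

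Now the density argument. Let $Z=U\setminus V$, a closed subscheme of dimension $<d$. Its scheme-theoretic image $\overline{\pi(Z)}\subset\mathbf{A}^d_K$ has dimension $<d$, because $\pi$ is quasi-finite. It is therefore contained in the zero locus of some nonzero polynomial $h\in K[x_1,\dots,x_d]$. A nonzero polynomial cannot vanish identically on a nonempty open subset of $K^d$, since $K$ is infinite and open subsets contain products of infinite sets. Hence $\{h\neq 0\}\cap D$ is dense in $D$. Its preimage under the homeomorphism $\pi|_\Omega$ is dense in $\Omega$ and avoids $Z(K)$, so it lies in $V(K)$. Therefore $y\in\overline{V(K)}$.

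The main obstacle is the local homeomorphism statement for étale maps on $K$-points: Lemma~\ref{lem:etopen} gives openness but not injectivity near $y$. I would try proving it directly via the multivariable Hensel lemma applied to a standard étale presentation $U=\Spec K[x_1,\dots,x_d,t_1,\dots,t_r]/(g_1,\dots,g_r)$ with invertible Jacobian in the $t$'s. If that is awkward, I would note that injectivity is unnecessary. Openness of $\pi(\Omega')$ for every small open $\Omega'\ni y$ already suffices, once we know $\pi(\Omega'\cap V(K))\supset\pi(\Omega')\cap\{h\neq0\}$. This containment holds because every point of $\Omega'$ over $\{h\neq0\}$ lies outside $Z$. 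That set is nonempty because $\pi(\Omega')$ is a nonempty open subset of $K^d$. Hence every neighbourhood of $y$ meets $V(K)$, and this version avoids the inverse function theorem entirely.
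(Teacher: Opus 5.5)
Your proof is correct. Its skeleton matches the paper's: reduce to $Y$ irreducible (a smooth point lies on only one component), then show that a proper Zariski closed subset cannot contain an analytic neighbourhood of a smooth $K$-point.

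The difference is in how that last step is justified. The paper sets $U=Y^{\mathrm{sm}}\cap V$ and supposes $D=Y^{\mathrm{sm}}(K)\setminus\overline{U(K)}$ is nonempty. It then gets a contradiction with $D\subset (Y\setminus U)(K)$ by simply asserting that every nonempty analytic open subset of $Y(K)$ is Zariski dense in the irreducible scheme $Y$. That assertion is not proved, and as stated it is too strong. For $u\in\Q_p^{\times}$ not a square, $Y=\Spec\Q_p[x,y]/(x^2-uy^2)$ is integral, yet $Y(\Q_p)=\{(0,0)\}$ is a nonempty open subset that is not Zariski dense. The assertion does hold for open subsets of $Y^{\mathrm{sm}}(K)$, which is the only case the paper uses.

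Your étale chart $\pi\colon U\to\mathbf{A}^d_K$ is exactly a proof of this density statement near smooth points. You combine it with $\overline{\pi(U\setminus V)}\subset\{h=0\}$ and the fact that a nonzero polynomial cannot vanish on a nonempty open subset of $K^d$. So your argument supplies the step the paper leaves implicit, and it does so directly rather than by contradiction.

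Of your two variants, the second is preferable: it uses only the openness of $\pi(K)$ from Lemma~\ref{lem:etopen} and needs no $K$-analytic inverse function theorem.
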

\begin{proof}
    Let us write $Y=\bigcup_{i=1}^r Y_i$ a decomposition of $Y$ in irreducible subschemes. For $y\in Y^{\mathrm{sm}}(K)$ we have $y\in Y^{\mathrm{sm}}(K)\cap Y_i(K)$ for some $i\in \{1,\dots, r\}$. So that by replacing $Y$ by $Y_i\setminus \bigcup_{i\neq j} Y_j$ we can assume $Y$ to be irreducible.

    \medskip
    
    As $V$ is Zariski dense, the intersection $U=Y^{\mathrm{sm}}\cap V$ is a non empty open. We now show the inclusion $Y^{\mathrm{sm}}(K)\subset \overline{U(K)}$. For this, let us assume by contradiction that the analytic open $D=Y^{\mathrm{sm}}(K)\setminus \overline{U(K)}$ is non-empty. Because $Y$ is an irreducible scheme of finite type, any non-empty open set in the analytic topology of $Y(K)$ is Zariski dense in $Y$. Since $D$ is as such, its Zariski closure is $Y$.

    \medskip

    However, by definition $D$ is contained in the Zariski closed set $Y\setminus U$, indeed we have 
\[
D\subset Y^{\mathrm{sm}}(K) \setminus \overline{U(K)}\subset Y^{\mathrm{sm}}(K) \setminus U(K)\subset (Y\setminus U)(K).
\]

This is a contradiction and completes the proof.

\end{proof}

Let us introduce some notation. Let $f\colon X\to Y $ be a quasi-finite morphisms of $K$-schemes of finite type. For a rational point $y\in Y(K)$, let us denote by $K(X_y)$ the Galois closure of the field extension $L/K$ generated by all points in the fiber of $f$ over $y$. Formally, one has
\[
K(X_y)= \langle L/K \mid \exists x\in X, f(x)=y \text{ and }L=\kappa(x)\subset \overline{K}\rangle.
\]
We consider two types of subsets of the rational points of $Y$. First, for $L/K$ a finite Galois extension let us write
\[
Y_L=\{y\in Y(K) \mid K(X_y)=L\}
\]
and, secondly, for $G$ a finite group we set
\[
Y_G= \{y\in Y(K) \mid \Gal (K(X_y)/K) \simeq G\}.
\]

Let us make a simple remark concerning these subsets. We have the decompositions
\[
Y(K) = \bigsqcup_{G} Y_G= \bigsqcup_L Y_L
\]
and
\[
Y_G= \bigsqcup_{\Gal(L/K)\simeq G} Y_L.
\]

\subsubsection{} We can now state our geometric version of Krasner's lemma for schemes. The statement is a generalization of Theorem~3.4 of \cite{Phi22a}.

\begin{theo} \label{theo:krasner}
    Let $f\colon X\to Y$ be a quasi-finite, flat and separated map of $K$-schemes of finite type. Let $F\subset Y$ be the minimal closed subset such that $X\setminus f^{-1}(F)$ is finite étale over $V=Y\setminus F$. Then, the following hold. 
    \begin{itemize}
        \item[$(i)$] Let $L/K$ be a finite Galois extension. For all $y\in \overline{V_L}\subset Y(K)$ we have $K(X_y)\subset L$. 

        \smallskip
        
        If $K$ is local, we further have that for all $y\in \overline{V_G}\subset Y(K)$, there is a surjective morphism $G\rightarrow \Gal(K(X_y)/K)$.
        
        \item[$(ii)$] There is an integer $n\geq 1$ and finite groups $H_1,\dots H_n$ such that $V(K)=\sqcup_{i=1}^n V_{H_i}$. Furthermore, the sets $V_{H_1},\dots, V_{H_n}$ are open and closed in $V(K)$.

        \smallskip

        If $K$ is local, then there is an integer $m\geq 1$ and finite extensions $L_1,\dots, L_m$ of $K$ such that $V(K)=\sqcup_{i=1}^m V_{L_i}$. Furthermore, the sets $V_{L_1}, \dots, V_{L_m}$ are open and closed in $V(K)$.

        \item[$(iii)$] Let $y\in \overline{Y^{\mathrm{sm}}(K)}$. Then, there is a finite group $G$ such that $y\in \overline{V_G}$.

        \smallskip

        If $K$ is local, then there is a finite extension $L/K$ such that $y\in \overline{V_L}$.
    \end{itemize}
\end{theo}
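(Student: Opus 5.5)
The plan is to prove the three items in the order $(ii)$, $(i)$, $(iii)$, since the local constancy of the Galois closure over $V(K)$ drives everything else.

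\emph{Step 1: local constancy over $V$.} Over $V$ the map $f_V\colon X_V\to V$ is finite étale. For $y\in V(K)$, Krasner's lemma in its classical form (continuity of roots) together with the local structure of finite étale maps gives an analytic neighbourhood $U\ni y$ on which the fibers are isomorphic as étale $K$-algebras. Concretely, locally on $V$ one writes $X_V$ as $\Spec$ of $\OO_V[T]/(P(T))$ with $P$ separable; Lemma~\ref{lem:etopen} and Lemma~\ref{lem:finiteclosed} then show that the $K$-points of the fiber, and after base change to any finite extension also the $K'$-points, vary continuously and form a covering space. Hence $y\mapsto K(X_y)$ is locally constant on $V(K)$, and so is the isomorphism class of $\Gal(K(X_y)/K)$. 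This shows that each $V_L$ and each $V_G$ is open in $V(K)$; since they partition $V(K)$, they are also closed.

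\emph{Step 2: finiteness.} The fiber degree is bounded by the degree $d$ of $f_V$, which is finite since $V$ is quasi-compact and $f_V$ is finite. Each group $\Gal(K(X_y)/K)$ is a subgroup of $\mfS_d$, so only finitely many $G$ occur; this proves the first part of $(ii)$. If $K$ is local, a local field has only finitely many extensions of degree at most $d!$, which gives finitely many $L_i$ and the second part of $(ii)$.

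\emph{Step 3: specialisation to the boundary, item $(i)$.} Take $y\in\overline{V_L}$ and $x\in X$ with $f(x)=y$; we must show $\kappa(x)\subset L$, i.e.\ $x$ lifts to an $L$-point. Since $f$ is quasi-finite and separated, Zariski's main theorem gives an open immersion $X\hookrightarrow \bar X$ with $\bar f\colon\bar X\to Y$ finite. Choose $y_k\in V_L$ with $y_k\to y$. Every point in the fiber over $y_k$ is $L$-rational, so $\bar X(L)$ contains points over each $y_k$. Applying Lemma~\ref{lem:finiteclosed} to $\bar f_L$ (finite maps are proper, so bounded sequences of fiber points have limits), the fiber points accumulate on $\bar X_y(L)$. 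The key issue, which I expect to be the main obstacle, is that \emph{every} point of the fiber $X_y$ must be reached as such a limit, including points on branches that are not closures of components over $V$. Flatness is what settles this: it makes $f$ open (in the analytic topology too, by a version of Lemma~\ref{lem:etopen} for flat quasi-finite maps, obtained locally via the local structure of quasi-finite flat maps as finite flat maps after a henselian/étale localisation). Hence a neighbourhood of $x$ in $X(\overline K)$ surjects onto a neighbourhood of $y$, so $x$ is a limit of points over $y_k$, and these points lie in the closed set of $L$-points. Therefore $\kappa(x)\subset L$, and taking Galois closures gives $K(X_y)\subset L$. For the statement about $V_G$ with $K$ local, Step 2 shows $V_G$ is a finite union of the $V_L$ with $\Gal(L/K)\simeq G$. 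Consequently $\overline{V_G}=\bigcup \overline{V_L}$, so $y\in\overline{V_L}$ for one such $L$, and then $K(X_y)\subset L$ yields the surjection $G\simeq\Gal(L/K)\to\Gal(K(X_y)/K)$.

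\emph{Step 4: item $(iii)$.} Since $f$ is flat and $F$ is the branch locus, $V$ is Zariski open and dense in every component of $Y$ that meets the smooth locus. To apply Lemma~\ref{lem:smoothK} directly, one can restrict to the reduced union of those components. The lemma gives $y\in\overline{V(K)}$. By Step 2, $V(K)$ is a finite union of the $V_G$, respectively of the $V_L$ when $K$ is local, so $y$ lies in the closure of one of them.
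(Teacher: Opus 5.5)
Your Steps~1 and~2, the $V_G$ half of Step~3, and the structure of Step~4 follow the paper. For $(ii)$ the paper simply cites Theorem~3.4 of \cite{Phi22a}, which is the continuity-of-roots argument you sketch through a local monogenic presentation.

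The gap is in the core of $(i)$, exactly where you flag it. The compactification $X\hookrightarrow\bar X$ together with Lemma~\ref{lem:finiteclosed} only produces \emph{some} $L$-point of $\bar X$ over $y$. That point may lie in $\bar X\setminus X$ or over a different point of $X_y$. So your argument rests entirely on the claim that $f$ is open, on $\overline K$-points, at a geometric point over $x$. Nothing in the paper supplies this. Lemma~\ref{lem:etopen} concerns étale maps on $K$-points, and the flat analogue is false on $K$- or $L$-points: take $t\mapsto t^2$ near $0$. You would need algebraically closed coefficients, and $\overline K$ is neither complete nor locally compact. You would also need a continuity-of-roots-with-multiplicities statement for finite flat families. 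Saying openness is obtained \emph{after an étale localisation} does not prove it: after localising you still face a finite flat map, and its openness on geometric points is precisely what must be shown.

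The missing observation is that after the étale localisation you need no openness at all, only isolation, surjectivity and closedness.
\begin{itemize}
\item By the étale-local Zariski main theorem (\cite{EGA4.4} Corollaire~18.12.3), there is an étale $q\colon W'\to Y$ and a point $y'\in W'(K)$ over $y$. There is also an open piece $X_1''$ of $X\times_Y W'$ that is finite locally free of positive rank over $W'$ and has a single point $x'$ above $y'$, lying over $x$.
\item Since $q(K)$ is open (Lemma~\ref{lem:etopen}), $y'\in\overline{q(K)^{-1}(V_L)}$.
\item Positive rank makes every $t'\in q(K)^{-1}(V_L)$ the image of a point of $X_1''$. That point is $L$-rational because the fiber of $X$ over $q(t')$ is split by $L$.
\item Lemma~\ref{lem:finiteclosed}, applied over $L$, says the image of $X_1''(L)$ is closed, so it contains $y'$.
\item The only possible preimage of $y'$ is $x'$, hence $\kappa(x)\subset\kappa(x')\subset L$.
\end{itemize}
This is the paper's proof, and it is also how one would actually establish the openness you invoke.

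A smaller point concerns Step~4. Flatness does not force $V$ to be dense: $Y\times_K\Spec K[\epsilon]/(\epsilon^2)\to Y$ is finite flat and nowhere étale. Generic étaleness has to be assumed or arranged. The paper treats it as an explicit reduction step rather than deducing it from flatness.
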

\begin{proof}     
    Let $y\in \overline{V_L}$ and $x\in X_y$. Let $U_x$ be a Zariski open such that $f^{-1}(y)\cap U_x=\{x\}$. By the étale-local form of Zariski Main Theorem - see \cite{EGA4.4} Corollaire~18.12.3 - there is an étale map $Y'\to Y$ with a point $y'\in Y'$ above $y$ such that $k(y')=k(y)$ and the base change $U_x'=U_x\times_Y Y'$ admits an open decomposition into $X_1'$ and $X_2'$ induced by Zariski opens of $U_x$ with the induced map $f'\colon U_x'\to Y'$ verifying that its restriction to $X_1'$ is finite and ${f'}^{-1}(y') \cap X_2'=\varnothing$. We thus have that ${f'}^{-1}(y')=\{x'\}\subset X_1'$. Further, the map $X_1'\to Y'$ is finite and flat by base change and restriction to an open. By choosing a neighborhood $W'$ of $y'\in Y'$ we can further assume that the resulting map $f'\colon X_1''\to W'$ is finite, locally free of positive rank. Let $q\colon W'\to Y$ be the induced map. It is étale therefore $q(K)\colon W'(K)\to Y(K)$ is open. It follows that $y'$ is in the preimage $q(K)^{-1}(\overline{V_L})=\overline{q(K)^{-1}(V_L)}$. Hence, $q(K)^{-1}(V_L)$ is not empty. Let $t'\in q(K)^{-1}(V_L)$. By construction the fiber ${f'}^{-1}(t')$ is non-empty and since $t'$ is $K$-rational it is a subscheme of the fiber of $X$ at $q(t')$, which is split by $L$. We thus have the inclusion
    \[
    q(K)^{-1}(V_L)\subset f'(L)(X_1'')(L)
    \]
    By Lemma~\ref{lem:finiteclosed}, the set $f'(L)(X_1''(L))$ is closed. It thus contains $y'$ and we have an $L$-rational point in the fiber above $x'$, which gives
    \[
    k(x)\subset k(x')\subset L.
    \]
    As this holds for all $x\in X_y$ we thus get $K(X_y)\subset L$ as desired.

    \smallskip

    If $K$ is local, we further have that there are only finitely many Galois extensions $L/K$ with given finite group $G$ so that 
    \[
    \overline{V_G}=\bigcup_{\Gal(L/K)\simeq G} \overline{V_L}.
    \]
    For $y\in \overline{V_G}$, we thus have a Galois extension $L/K$ of group $G$ such that $K(X_y)\subset L$, which in turn gives a surjective map $G\to \Gal(K(X_y)/K)$.

    \medskip

    Part~$(ii)$ follows directly from Theorem~3.4 of \cite{Phi22a}. 

    \medskip

    For the last part of the statement, first note we can assume $f$ to be generically étale by passing to reduced subschemes so that $V\subset Y$ is dense and open. The result now follows from Lemma~\ref{lem:smoothK} applied with $V$ and the finite decomposition of $V(K)$ in open and closed subsets coming from $(ii)$.
\end{proof}

For inertia groups of local fields we get the following corollary. 

\begin{cor} \label{cor:krasnerinertie}
    Assume $K$ is local and let $f\colon X\to Y$ be a quasi-finite, flat and separated morphism of $K$-schemes of finite type. Let $F\subset Y$ be the closed subset such that $X\setminus f^{-1}(F)$ is finite étale over $V=Y\setminus F$. Then, there is an integer $n\geq 1$, finite groups $H_1,\dots, H_n$ and open and closed subsets $V_{H_1}^{\mathrm{un}},\dots V_{H_n}^{\mathrm{un}}$ of $V(K)$ such that $V(K)=\sqcup_{i=1}^n V^{\mathrm{un}}_{H_i}$, and for all $1\leq i \leq n$, 
    \[
    y\in V_{H_i}^{\mathrm{un}} \iff I(K(X_y)/K)\simeq H_i.
    \]
    For $1\leq i\leq n$ and all $y\in \overline{V_{H_i}^{\mathrm{un}}}\subset Y(K)$ we have a surjective map $H_i\to I(K(X_y)/K)$.

    \smallskip 

    Furthermore, for $y\in \overline{Y^{\mathrm{sm}}(K)}$ there is an $i\in \{1,\dots, n\}$ such that $y\in \overline{V_{H_i}^{\mathrm{un}}}$.
\end{cor}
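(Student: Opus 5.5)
The plan is to reduce Corollary~\ref{cor:krasnerinertie} to Theorem~\ref{theo:krasner} by grouping the sets $V_L$ according to the inertia subgroup of $L/K$. Since $K$ is local, part $(ii)$ of Theorem~\ref{theo:krasner} gives finitely many finite Galois extensions $L_1,\dots,L_m$ of $K$ with $V(K)=\bigsqcup_{j=1}^m V_{L_j}$, each $V_{L_j}$ open and closed in $V(K)$. For each $j$ let $I_j=I(L_j/K)\subset \Gal(L_j/K)$ be the inertia subgroup. Let $H_1,\dots,H_n$ be a list of representatives of the isomorphism classes among the $I_j$, and set
\[
V^{\mathrm{un}}_{H_i}=\bigsqcup_{j\,:\,I_j\simeq H_i} V_{L_j}.
\]
These are finite unions of open and closed subsets of $V(K)$, hence open and closed. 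They partition $V(K)$, and for $y\in V(K)$ we have $y\in V_{L_j}$ exactly when $K(X_y)=L_j$. Hence $y\in V^{\mathrm{un}}_{H_i}$ if and only if $I(K(X_y)/K)\simeq H_i$. This settles the first assertion.

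For the surjectivity statement, note that a finite union commutes with closure, so
\[
\overline{V^{\mathrm{un}}_{H_i}}=\bigcup_{I_j\simeq H_i}\overline{V_{L_j}}.
\]
Thus any $y\in \overline{V^{\mathrm{un}}_{H_i}}$ lies in some $\overline{V_{L_j}}$ with $I_j\simeq H_i$. By part $(i)$ of Theorem~\ref{theo:krasner} we then have $K(X_y)\subset L_j$.

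The key step is the elementary fact that inertia groups behave well under quotients. For Galois extensions $K\subset M\subset L$, the restriction map $\Gal(L/K)\to\Gal(M/K)$ sends $I(L/K)$ onto $I(M/K)$. One sees this by viewing $I(L/K)$ as the image of the absolute inertia $I_K$ in $\Gal(L/K)$, and similarly for $M$: the composite $I_K\to I(L/K)\to I(M/K)$ is the canonical surjection. Applying this with $M=K(X_y)$ and $L=L_j$ gives the desired surjection $H_i\simeq I_j\to I(K(X_y)/K)$. Here one should also check that $K(X_y)$ is Galois over $K$, which holds by definition as a Galois closure, including at points of $F$.

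The last assertion is immediate from part $(iii)$ of Theorem~\ref{theo:krasner}. If $y\in\overline{Y^{\mathrm{sm}}(K)}$, there is a finite extension $L$ with $y\in\overline{V_L}$. The set $V_L$ is nonempty, since otherwise its closure would be empty, so $L=L_j$ for some $j$. Hence $y\in\overline{V^{\mathrm{un}}_{H_i}}$ for the $i$ with $H_i\simeq I_j$.

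I expect no real obstacle. The only points needing care are that the decomposition into the $V_{L_j}$ is finite, which is why $K$ is assumed local and is what allows closure to commute with the union, and the surjectivity of inertia under restriction.
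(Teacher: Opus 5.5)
Your proof is correct. It differs from the paper's only in how the partition of $V(K)$ is produced.

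The paper first base changes to the unramified extension $K'/K$ of degree $d!$, where $d$ is the degree of the étale cover over $V$. It observes that $\Gal(K'K(X_y)/K')\simeq I(K(X_y)/K)$ for every $y\in V(K)$, because $K'$ absorbs the maximal unramified subextension of $K(X_y)$. It then applies the group version of part $(ii)$ of Theorem~\ref{theo:krasner} over $K'$ and pulls the resulting sets back along $\iota\colon V(K)\hookrightarrow V(K')$.

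You instead apply the field version of $(ii)$ directly over $K$ and group the finitely many $V_{L_j}$ by the isomorphism class of $I(L_j/K)$. The resulting sets coincide with the paper's, since both are cut out by the condition $I(K(X_y)/K)\simeq H_i$.

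For the closure statement, the paper re-decomposes $\iota^{-1}(V_{H_i})$ into finitely many $V_L$'s anyway. From there it argues exactly as you do, via part $(i)$ and the surjectivity of inertia under restriction. The last assertion is deduced from $(iii)$ in the same way.

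Your route is the more economical one. The finite decomposition needed to commute closure with union is available from the start, and you skip the auxiliary check about $K'$. The paper's detour recasts the inertia condition as a full Galois-group condition, so the group version of $(ii)$ applies verbatim. But the closure step still needs the field-by-field decomposition, so the detour adds nothing essential.
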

\begin{proof}
    Let us first remark that over the finite unramified extension $K'$ of degree $d!$, where $d$ is the degree of the induced finite étale cover of $V$, of $K$, we have $\Gal(K'K(X_y)/K')=I(K(X_y)/K)$ for all $y\in V(K)$. 
    By $(ii)$ of Theorem~\ref{theo:krasner}, there are finite groups $H_1,\dots H_n$ such that $V(K')$ is covered by the open and closed subsets $V_{H_1},\dots V_{H_n}$. As the inclusion $\iota \colon V(K)\to V(K')$ is continuous, the sets $\iota^{-1}(V_{H_1}),\dots \iota^{-1}(V_{H_n})$ are open and closed in $V(K)$ and we have
    \[
    V(K)=\bigsqcup_{i=1}^n \iota^{-1}(V_{H_i}).
    \]
    By construction, these subsets have the property that, for $1\leq i\leq n$,
      \[
    y\in \iota^{-1}(V_{H_i}) \iff I(K(X_y)/K)\simeq H_i.
    \]

    \medskip

    Let us fix $i\in \{1,\dots, n\}$ and consider $y\in \overline{\iota^{-1}(V_{H_i})}$. First, note that we have a decomposition 
    \[
    \iota^{-1}(V_{H_i})=\bigsqcup_{I(L/K)\simeq H_i} V_L.
    \]
    Since $K'$ is local, this is a finite union and thus we have $y\in \overline{V_L}$ for some finite Galois extension $L/K$ with inertia group isomorphic to $H_i$. It follows from part $(i)$ of Theorem~\ref{theo:krasner} that there is an inclusion $K(X_y)\subset L$ and thus a surjective map $\Gal(L/K)\to \Gal (K(X_y)/K)$ and, in particular, a surjection $H_i\simeq I(L/K)\to I(K(X_y)/K)$.

    \medskip

    For the last part of the statement, let $y\in Y^{\mathrm{sm}}(K)$. As $K$ is local, by $(iii)$ of Theorem~\ref{theo:krasner}, there is a finite extension $L/K$ such that $y\in \overline{V_L}$. For $i\in\{1,\dots, n\}$ such that $V_L\subset \iota^{-1}(V_{H_i})$ we get $y\in \overline{\iota^{-1}(V_{H_i})}$ as desired.  
\end{proof}

\subsection{Degenerating families of abelian varieties}

\subsubsection{} Let us first show how finite monodromy groups in the fibers of semi-abelian schemes can be recovered from the $\ell$-torsion cover of the base for $\ell$ big enough. This generalizes Proposition~2.2 of \cite{Phi22a}. 

\begin{lem}\label{lem:bigltorfinimonod} Let $A\to S$ be a semi-abelian scheme over $K$. Let $\ell > \max (2\dim A+1, p)$ be a prime. Then, for every $s\in S(K)$, we have that either $\Card \Gal( K^{\mathrm{un}}(A_s[\ell])/K^{\mathrm{un}})$ is divisible by $\ell$ in which case
\[
\Phi_{A_s} \simeq \Gal( K^{\mathrm{un}}(A_s[\ell])/K^{\mathrm{un}})/ \Z/\ell\Z  
\]
or $\Phi_{A_s} \simeq \Gal( K^{\mathrm{un}}(A_s[\ell])/K^{\mathrm{un}})$.
\end{lem}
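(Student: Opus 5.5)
The plan is to work fiber by fiber. Fix $s\in S(K)$ and put $A=A_s$, a semi-abelian variety of dimension $g$, toric rank $t$ and abelian rank $a$, so $g=t+a$. Let $I=I_K$ and consider the mod-$\ell$ representation $\bar\rho\colon I\to \GL(A[\ell])$. The group $H=\Gal(K^{\mathrm{un}}(A[\ell])/K^{\mathrm{un}})$ is the image $\bar\rho(I)$. I will compare $H$ with $\Phi_A=I/I_{A}$ through the $\ell$-adic representation $\rho\colon I\to \GL(\Tl A)$, whose reduction is $\bar\rho$. The proof has three steps.

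\textbf{Step 1: $\bar\rho$ is faithful on $\Phi_A$.} By Grothendieck's monodromy theorem, applied as in Theorem~\ref{theo:characsemistable}, $\rho(I_A)$ is unipotent with $(\rho(\sigma)-1)^2$ acting trivially; this is the "order $3$" statement there, coming from the filtration $\Tl T\subset \Tl A$ and SGA7 for $B$. For $\sigma\in I$ whose image in $\Phi_A$ is nontrivial, $\rho(\sigma)$ is not unipotent. The Silverberg--Zarhin type argument gives that the finite-order part of $\rho(\sigma)$ is a nontrivial element whose characteristic polynomial has integer coefficients independent of $\ell$. Its eigenvalues are roots of unity of order $m$, and $\varphi(m)\le 2g$. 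This order bound is available from the $\GL_t(\Z)\times\Sp$ description in Theorem~\ref{theo:(p,t,t_0,a_0)-inertial}.

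Next use that the tame quotient of $I$ is procyclic, which gives $\Phi_A=\Gamma_p\rtimes \Z/n\Z$. Since $\ell\neq p$, it suffices to check that no nontrivial element of $\Phi_A$ maps into the kernel of reduction of the pro-$\ell$ part. Concretely, if $\sigma$ reduces to the identity mod $\ell$ while its semisimple part has a nontrivial root-of-unity eigenvalue $\zeta$, then $\zeta\equiv 1$ modulo a prime above $\ell$. That forces $\zeta$ to have $\ell$-power order, which is impossible because $\ell>2g+1$ exceeds every possible order. This is the classical Serre--Tate/Raynaud lemma: an element of finite order congruent to $1$ mod $\ell$ is trivial when $\ell\ge 3$.

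\textbf{Step 2: the contribution of $I_A$.} Now determine $\bar\rho(I_A)$. Since $\rho(I_A)$ consists of unipotent elements with $(u-1)^2=0$, and $I_A$ acts through its maximal pro-$\ell$ tame quotient $\Z_\ell$ (wild inertia acts trivially, being pro-$p$ inside a unipotent pro-$\ell$ image), $\rho(I_A)$ is topologically generated by one element $u=1+N$. Hence $\bar\rho(I_A)$ is generated by $\bar u$, which has order $1$ or $\ell$ because $\bar N^2=0$ and $\ell>2$. Therefore $\bar\rho(I_A)$ is either trivial or $\Z/\ell\Z$.

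\textbf{Step 3: assembling the extension.} We obtain an exact sequence
\[
1\to \bar\rho(I_A)\to H\to \Phi_A\to 1 ,
\]
where the right-hand map exists because $\bar\rho(I_A)$ is normal in $H$ and $H/\bar\rho(I_A)$ is a quotient of $\Phi_A$; Step 1 shows it is exactly $\Phi_A$. More precisely, one checks that $\bar\rho(\sigma)\in\bar\rho(I_A)$ implies $\sigma\in I_A$, by applying Step 1 to the semisimple part of $\rho(\sigma)$. Finally, $\ell\nmid \Card \Phi_A$: the order of $\Phi_A$ involves only primes $q$ with $q-1\le 2g$ (Minkowski-type bound), and $\ell>2g+1$. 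So $\ell\mid \Card H$ exactly when $\bar\rho(I_A)\simeq\Z/\ell\Z$, and the two cases of the statement follow.

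The main obstacle is Step 1 in the toric and mixed setting: making precise the assertion that "$\bar\rho(\sigma)\in\bar\rho(I_A)$ forces $\sigma\in I_A$". To handle it, I would pass to a finite extension $L$ over which $A$ is semi-stable. There $\Phi_A$ acts faithfully on the reduction $A'$ (property (iii) of the proposition on $\Phi_{A,v}$), and $A'[\ell]$ is a subquotient of $A[\ell]$ via the Néron model sequence of Lemma~\ref{lem:neronmodels}. Serre--Tate's lemma, that automorphisms of $A'$ acting trivially on $A'[\ell]$ for $\ell\ge3$ are trivial (using the polarization on the abelian part and $\GL_t(\Z)\to\GL_t(\F_\ell)$ injective on finite subgroups), then gives the claim.
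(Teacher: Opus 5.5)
Your proof is correct and follows the same route as the paper. Write $H=\bar\rho(I)$. Then $H$ is an extension of $\Phi_A$ by $\bar\rho(I_A)$; the latter is trivial or $\Z/\ell\Z$ because $I_A$ acts by unipotent matrices through a procyclic pro-$\ell$ tame quotient; and $\ell\nmid\Card\Phi_A$ because $\ell-1>2g$. The one difference is that the paper simply cites Raynaud's result $K^{\mathrm{un}}_{A_s}\subset K^{\mathrm{un}}(A_s[\ell])$, whereas your Step~1 proves this inclusion directly: the eigenvalues of $\rho(\sigma)$ for $\sigma\in\ker\bar\rho$ are roots of unity congruent to $1$ modulo $\ell$, hence of $\ell$-power order, hence equal to $1$ since $\ell-1>2g$. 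In passing, the filtration $\Tl T\subset\Tl A$ only gives $(\rho(\sigma)-1)^3=0$, not the square, but this still suffices since $\ell\geq 5$.
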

\begin{proof}
    We consider the Galois extension $K^{\mathrm{un}}(A_s[\ell])/K^{\mathrm{un}}$ and denote by $G$ its Galois group. By a result of Raynaud the field $K^{\mathrm{un}}_{A_s}$ is a subfield of $K^{\mathrm{un}}(A_s[\ell])$ which is determined by a subgroup $G_{A_s}$ of $G$ and we have $\Phi_{A_s}\simeq G/G_{A_s}$.

    \medskip

    As a Galois module $A_s[\ell]$ is an extension of $T_s[\ell]$ and $B_s[\ell]$. Since $T_s$ has split good reduction over $K^{\mathrm{un}}_{A_s}$, it follows that $G_{A_s}$ acts trivially on $T_s[\ell]$. On the other hand, $G_{A_s}$ acts unipotently with order at most $2$ on $B_s[\ell]$ so that it acts unipotently with order at most $3$ on $A_s[\ell]$. Hence the exponent of $G_{A_s}$ divides $\ell$, and since it is tamely ramified $G_{A_s}$ is either the trivial group or $\Z/\ell\Z$.

    \medskip

    By the choice of $\ell$, we also have that it is prime to the order of $\Phi_{A_s}$ so that the conclusion follows.
\end{proof}

\subsubsection{} We will now apply our Krasner's lemma to obtain the main result of this section. 

\begin{theo}\label{theo:recouvkrasnersab}
    Let $S$ be an irreducible $K$-scheme of finite type and $G\to S$ be a semi-abelian scheme with generic fiber an abelian variety. Let $U\subset S$ be the largest open subscheme over which $G\to S$ is abelian and set $F=S\setminus U$. Then there is an integer $n\geq 1$, finite groups $H_1,\dots, H_n$ and a finite covering $U_1,\dots, U_n$ of $(S\setminus F)(K)$ by non-empty disjoint open subsets such that, for $1\leq i\leq n$,
    \[
    s\in U_i \iff \Phi_{G_s}\simeq H_i,
    \]
    and, if a $K$-rational point $x\in F$ is in the closure $\overline{U_i}$ of $U_i$ for some $i\in \{1,\dots, n\}$ then we have that $\Phi_{G_x}$ is a quotient of $H_i$.
    
    \medskip

    Furthermore, if $x\in F(K)$ is in the closure of the smooth locus $\overline{S^{\mathrm{sm}}(K)}$ then there is an $i\in \{1,\dots, n\}$ such that $x\in \overline{U_i}$.
\end{theo}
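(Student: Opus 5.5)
Fix a prime $\ell > \max(2\dim G+1,p)$ and apply Corollary~\ref{cor:krasnerinertie} to the $\ell$-torsion morphism $f\colon X=G[\ell]\to S$. The hypotheses need checking first. Multiplication by $\ell$ on a semi-abelian scheme is quasi-finite and flat; flatness comes from the fibrewise criterion, since $[\ell]$ is an isogeny on each fiber and $G$ is flat over $S$. It is also separated. Since $\ell$ is invertible on $S$ (we work over $K$), $G[\ell]$ is étale over $S$. It is finite exactly where the fiber degree $\ell^{2\dim G}$ is attained, i.e.\ on the abelian locus $U$: over a point of $F$ the toric part lowers the rank $\ell^{2a+t}$ of the fiber. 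So the closed set $F$ of the Corollary is $S\setminus U$. Moreover, for $s\in S(K)$ the Galois closure $K(X_s)$ is exactly $K(G_s[\ell])$, since the fiber is the finite étale group scheme $G_s[\ell]$.

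The Corollary then gives finite groups $H'_1,\dots,H'_m$ and an open and closed partition $U(K)=\bigsqcup V^{\mathrm{un}}_{H'_j}$, indexed by the inertia group $I(K(G_s[\ell])/K)=\Gal(K^{\mathrm{un}}(G_s[\ell])/K^{\mathrm{un}})$. It also says that every $x\in F(K)$ in the closure of $V^{\mathrm{un}}_{H'_j}$ comes with a surjection $H'_j\to \Gal(K^{\mathrm{un}}(G_x[\ell])/K^{\mathrm{un}})$, and that every $x\in\overline{S^{\mathrm{sm}}(K)}$ lies in some such closure. Lemma~\ref{lem:bigltorfinimonod} now converts $\ell$-torsion Galois groups into finite monodromy groups. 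For each $j$, put $H_j = H'_j/(\Z/\ell\Z)$ if $\ell\mid \Card H'_j$, and $H_j=H'_j$ otherwise. The quotient is well defined because the $\ell$-part is a normal subgroup of order $\ell$; it is the kernel $G_{A_s}$ in Raynaud's description. Then for $s\in V^{\mathrm{un}}_{H'_j}$ we have $\Phi_{G_s}\simeq H_j$.

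To get the partition indexed by isomorphism classes of $\Phi$, set $U_i$ to be the union of the $V^{\mathrm{un}}_{H'_j}$ whose $H_j$ lie in the $i$-th isomorphism class, discarding empty pieces. This is a finite union of open and closed sets, so the $U_i$ are open, disjoint and cover $U(K)$, and $s\in U_i \iff \Phi_{G_s}\simeq H_i$. The last assertion of the theorem is then immediate from the last assertion of the Corollary.

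The main obstacle is the boundary statement. Let $x\in F(K)\cap\overline{U_i}$. Since $U_i$ is a finite union, $x\in \overline{V^{\mathrm{un}}_{H'_j}}$ for some $j$, which gives a surjection $\pi\colon H'_j\to \Gal(K^{\mathrm{un}}(G_x[\ell])/K^{\mathrm{un}})=:Q$. Lemma~\ref{lem:bigltorfinimonod} applied at $x$ (it holds for all $s\in S(K)$) says $\Phi_{G_x}$ is $Q$ or $Q$ modulo a normal subgroup of order $\ell$. What remains is to show that $H'_j\to Q\to\Phi_{G_x}$ factors through $H_j$, i.e.\ that the $\ell$-subgroup of $H'_j$ maps into the $\ell$-part of $Q$. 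This follows from orders: $\Phi_{G_x}$ has order prime to $\ell$ by the choice of $\ell$, so any element of order $\ell$ in $H'_j$ dies in $\Phi_{G_x}$. Hence $\Phi_{G_x}$ is a quotient of $H'_j/(\text{its }\ell\text{-part})=H_j$.

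A secondary point to verify with care is that $[\ell]$ is indeed flat and quasi-finite over the degenerate locus, and that the minimal $F$ of the Corollary coincides with $S\setminus U$ rather than something smaller. If the non-finite locus were smaller, it would only enlarge the set where the statement is claimed, and one can restrict the partition to $U(K)$.
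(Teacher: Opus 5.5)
Your proposal is correct and follows the paper's proof: apply Corollary~\ref{cor:krasnerinertie} to the $\ell$-torsion cover $G[\ell]\to S$ with $\ell>\max(2\dim G+1,p)$, then convert inertia groups of $\ell$-torsion fields into finite monodromy groups via Lemma~\ref{lem:bigltorfinimonod}. The details you add are sound and are exactly what the paper's one-line proof leaves implicit: identifying the non-finite locus with $F$, merging pieces by the isomorphism class of $\Phi$, and using $\ell\nmid\Card\Phi_{G_x}$ to make $H'_j\to\Phi_{G_x}$ factor through $H_j$ at boundary points.
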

\begin{proof}
    Let $\ell> \max(2\dim  G+1,p)$ be a prime. The open sets given by Corollary~\ref{cor:krasnerinertie} applied with the $\ell$-torsion scheme $G[\ell]\to S$, which is flat as $\ell$ is invertible on $S$, are seen to have the required properties by Lemma~\ref{lem:bigltorfinimonod}.
\end{proof}

\section{An application to the stability degree of curves}

\subsection{Stability degree of stable curves}

\subsubsection{} Stable curves have been introduced by Deligne and Mumford in \cite{DM72}. 

\begin{defin}[\cite{DM72} Definition~1.1]
    Let $S$ be a scheme. A stable curve over $S$ is a proper flat morphism $\pi\colon C\to S$ whose geometric fibers are reduced, connected $1$-dimensional schemes such that they have only ordinary double points and the non-singular rational components meet the other components in at least $3$ points. 
\end{defin}

It comes with the definition of stable reduction of smooth proper curves in order to compactify the moduli space of curves.

\begin{defin}[\cite{DM72} Definition~2.2]
    Let $C$ be a curve over a number field $K$. The curve $C$ has stable reduction if there is a stable curve $\mathcal{C}$ over $\Spec \OO_K$ with generic fiber $C$. 
\end{defin}

The main characterization for our purpose is via the Jacobian variety of the curve.

\begin{theo}[\cite{DM72} Theorem~2.5]
 Let $C$ be a curve over a number field $K$. The Jacobian $J(C)$ has semi-stable reduction if and only if $C$ has stable reduction.   
\end{theo}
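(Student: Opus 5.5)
The statement is local on $\Spec \OO_K$: both stable reduction of $C$ and semi-stable reduction of $J(C)$ can be checked place by place, after passing to $\OO_{K_v}$. Stable models, when they exist, are unique and compatible with étale base change, so they glue from the local ones. I would therefore fix a finite place $v$ and work over a strictly henselian discrete valuation ring $R$ with fraction field $K_v^{\mathrm{un}}$. This base change is harmless because semi-stability is read off from the inertia action (Theorem~\ref{theo:characsemistable}), and stability of a model can be checked after an étale, in particular unramified, base change. The bridge between the two sides is Raynaud's theorem: if $\mathcal{X}/R$ is a proper flat regular model of $C$ whose special fiber has gcd of multiplicities $1$ (automatic once $C(K_v^{\mathrm{un}})\neq\varnothing$, which holds here because the residue field is separably closed), then $\operatorname{Pic}^0_{\mathcal{X}/R}$ is the identity component $\mathcal{J}^\circ$ of the Néron model of $J(C)$.

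For \emph{stable $\Rightarrow$ semi-stable}, I start from a stable model $\mathcal{C}/R$ and resolve its singularities. Each singular point is an ordinary double point of the special fiber, étale locally of the form $xy=\pi^m$, and is resolved by a chain of $m-1$ projective lines. This produces a regular model $\mathcal{X}$ whose special fiber is reduced with only ordinary double points. Then $\operatorname{Pic}^0$ of such a curve over the residue field $k$ is an extension of the product of the Jacobians of the normalized components by a torus of dimension $h^1$ of the dual graph. Raynaud's theorem identifies this with $\mathcal{J}^\circ_k$, which therefore has no unipotent part, so $J(C)$ has semi-stable reduction.

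For \emph{semi-stable $\Rightarrow$ stable}, I take the minimal regular model $\mathcal{X}$ of $C$ (genus $g\geq 2$; the case $g=1$ is treated separately). Semi-stability means that $\operatorname{Pic}^0_{\mathcal{X}_k}$ has trivial unipotent radical, and I must show that $\mathcal{X}_k$ is reduced with only ordinary double points. First, the unipotent rank of $\operatorname{Pic}^0$ of a proper curve receives contributions from non-reduced components and from non-nodal singularities, via the $\delta$-invariants minus the contribution of branches. So vanishing of the unipotent rank forces every singular point to be an ordinary node and every component to have multiplicity $1$. Second, I contract the $(-2)$-chains of smooth rational curves meeting the rest of the fiber in at most two points. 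By minimality there are no $(-1)$-curves, so this contraction yields a model whose special fiber satisfies the stability condition on rational components, and whose singularities are of the form $xy=\pi^m$; this is the stable model $\mathcal{C}$.

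The main obstacle is the first half of the converse: showing that a vanishing unipotent part of $\operatorname{Pic}^0_{\mathcal{X}_k}$ really forces the special fiber to be reduced. The unipotent rank does not simply equal a sum of local invariants when components carry multiplicities. My first attempt would be the standard argument, as in Deligne--Mumford or in \cite{BLR90}~9.2: compute $\dim\operatorname{Pic}^0 = h^1(\mathcal{X}_k,\OO)=g$ and compare it with $\sum g(\widetilde{C_i}) + h^1(\text{graph})$, via the exact sequence for the normalization of $(\mathcal{X}_k)_{\mathrm{red}}$. The discrepancy must then be absorbed by the multiplicities, and the multiplicities must all be $1$, using that the gcd condition and the unipotent rank vanishing leave no room for $h^1(\OO)$ to exceed the reduced count. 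Alternatively, I would replace this step by an argument with the $\ell$-adic realization: unipotent inertia action, together with the Picard--Lefschetz description and the weight-monodromy filtration on $H^1$, forces the special fiber to be semistable after blow-downs.
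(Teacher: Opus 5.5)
The paper does not prove this statement; it quotes Deligne--Mumford. Your framework (Raynaud's identification of $\operatorname{Pic}^0$ of a regular model with $\mathcal{J}^\circ$, applied to a resolved stable model in one direction and to the minimal regular model in the other) is the classical route. Your direction ``stable $\Rightarrow$ semi-stable'' is fine. So is the final contraction of $(-2)$-chains, once the special fibre is known to be reduced and nodal. The converse, however, is not proved: its decisive step is the one you leave open, and two claims you rely on are false.

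First, it is not true that non-reduced components contribute to the unipotent rank of $\operatorname{Pic}^0$ of the special fibre, so no local $\delta$-invariant count can give reducedness. Take a regular model with reduced nodal special fibre (e.g.\ the resolution of a stable model) and blow up a node with local equation $xy=\pi$. The new regular model $\mathcal{X}'$ contains the exceptional curve with multiplicity $2$ and still has $\gcd=1$. By Raynaud, $\operatorname{Pic}^0_{\mathcal{X}'_k}\simeq\mathcal{J}^\circ_k$, which is semi-abelian. So reducedness has to come from minimality, through a global numerical argument. Write $D=\mathcal{X}_k=\sum n_iC_i$ and $Y=D_{\mathrm{red}}$. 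The abelian-plus-toric rank of $\operatorname{Pic}^0_D$ equals that of $\operatorname{Pic}^0_Y$, since the kernel is unipotent. That rank is at most $p_a(Y)$, with equality iff $Y$ has only ordinary double points (its singularities are planar). By adjunction, using $D^2=0$,
\[
2p_a(D)-2p_a(Y)=(D-Y)\cdot K-Y^2\geq 0,
\]
because $K\cdot C_i\geq 0$ on a minimal model and $Y^2\leq 0$ by Zariski's lemma. Now $p_a(D)=g$, and, granted $\gcd=1$, Raynaud gives $\operatorname{Pic}^0_D$ abelian-plus-toric rank $g$. Semi-stability therefore forces equality throughout, and $Y^2=0$ together with $\gcd=1$ gives $D=Y$. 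None of this appears in your proposal, and the weight-monodromy alternative you mention does not supply it.

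Second, a separably closed residue field does not give $C(K_v^{\mathrm{un}})\neq\varnothing$. Suppose $E$ has good reduction and $\ell\neq p$. Then $E(K_v^{\mathrm{un}})$ is $\ell$-divisible, so $H^1(K_v^{\mathrm{un}},E)[\ell]\simeq\operatorname{Hom}(I_v,E[\ell])\neq 0$. The corresponding genus-one torsors have no $K_v^{\mathrm{un}}$-point, so every component of their regular models has multiplicity at least $2$, even though their Jacobian has good reduction. Hence in the converse the hypothesis $\gcd=1$ needed for Raynaud's theorem on the minimal model is not automatic. For $g\geq 2$ it does hold, but only as a consequence of the theorem, so you must prove it or circumvent it. One way is to pass to an extension over which $C$ has a rational point and then descend the stable model; this requires showing that inertia acts trivially on its special fibre. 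This is exactly where $g\geq 2$ must enter. The same torsors show that the equivalence fails for genus-one curves without local points, so the case $g=1$, which you set aside, cannot simply be ``treated separately''.
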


\subsubsection{} Let us succinctly define the stability degree of stable curves. First, let us state the structure theorem for their Jacobians, which is Example~8 of \cite{BLR90}.
\begin{prop} \label{prop:jacobianstable}
Let $C$ be a stable curve of genus $g$. Let $\widetilde{C}$ be the normalization of $C$ with irreducible components $C_i$. Then the Jacobian $J(C)$ of $C$ is a semi-abelian variety of dimension $g$. In particular, there is a torus $T$ and an exact sequence
\[
\begin{tikzcd}
    0 \arrow[r] & T \arrow[r] & J(C) \arrow[r] & \prod\limits_{i=1}^r J(C_i) \arrow[r] & 0.
\end{tikzcd}
\]
\end{prop}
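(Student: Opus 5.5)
The plan is to realize $J(C)=\operatorname{Pic}^0_{C/k}$ as an extension of the Jacobian of the normalization by a torus, using the normalization map $\nu\colon \widetilde{C}\to C$ and the standard unit sequence on $C$. We may work over an algebraically closed field first and then descend, since all the maps below are canonical. The starting point is the short exact sequence of sheaves on $C$
\[
0\longrightarrow \OO_C^{\times}\longrightarrow \nu_*\OO_{\widetilde{C}}^{\times}\longrightarrow \bigoplus_{P\in C^{\mathrm{sing}}} \bigl(\nu_*\OO_{\widetilde{C}}^{\times}\bigr)_P/\OO_{C,P}^{\times}\longrightarrow 0 .
\]
At an ordinary double point $P$ with preimages $Q_1,Q_2$, the quotient $k(Q_1)^{\times}\times k(Q_2)^{\times}/k(P)^{\times}$ is isomorphic to $\G_m$. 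Taking cohomology, and using that $H^0$ of units on a connected proper reduced curve is $k^\times$, while on $\widetilde{C}=\sqcup_i C_i$ it is $(k^\times)^r$, gives
\[
0\to k^\times\to (k^\times)^r\to \G_m^{\delta}\to \operatorname{Pic}(C)\to \prod_{i=1}^r\operatorname{Pic}(C_i)\to 0,
\]
where $\delta$ is the number of nodes. Surjectivity on the right holds because the skyscraper quotient has vanishing $H^1$.

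The second step is to pass to degree-zero parts and to group schemes. Restricting to multidegree-zero line bundles, the map $\nu^*\colon\operatorname{Pic}^0_{C}\to\prod_i \operatorname{Pic}^0_{C_i}$ is surjective with kernel the image of $\G_m^{\delta}$. That image is the torus
\[
T=\G_m^{\delta}/\operatorname{coker}(\G_m\to\G_m^r)\simeq \G_m^{\delta-r+1}.
\]
A quotient of a torus by a subtorus is a torus, so $T$ is indeed a torus. Its dimension is the first Betti number $h^1$ of the dual graph. To make this scheme-theoretic rather than just on $k$-points, I would run the same argument functorially over an arbitrary $k$-scheme $S'$, or simply invoke the representability of $\operatorname{Pic}^0$ and check the sequence on points and tangent spaces.

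The third step is to conclude. The group $\operatorname{Pic}^0_{C_i}=J(C_i)$ is an abelian variety, because each $C_i$ is smooth proper connected. So $J(C)$ is an extension of an abelian variety by a torus, hence semi-abelian. For the dimension, take the Euler characteristic of the unit sequence with $\OO$ in place of units. This gives $p_a(C)=\sum_i g(C_i)+\delta-r+1$, so $\dim J(C)=h^1(\OO_C)=g$. Galois descent then gives the sequence over the base field. Over a non-closed field $T$ may be a non-split torus, which the statement allows.

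The main obstacle is the exactness on the left inside $\operatorname{Pic}$ together with connectedness. One must check that the kernel of $\G_m^{\delta}\to\operatorname{Pic}(C)$ is exactly the image of $(k^\times)^r$, and that this image is a subtorus. This comes down to connectedness of the dual graph, since the map $\G_m^r\to\G_m^{\delta}$ is the coboundary of that graph, whose kernel is the diagonal $\G_m$. Since the proposition is a standard result (the paper cites \cite{BLR90}), in practice I would cite it there, indicating the above as the argument.
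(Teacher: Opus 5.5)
Your argument is correct and is essentially the paper's: the paper gives no proof of its own and simply cites the structure theorem in \cite{BLR90}, whose argument is exactly your units sequence $0\to\OO_C^{\times}\to\nu_*\OO_{\widetilde{C}}^{\times}\to\bigoplus_P k^{\times}\to 0$, with the torus of dimension $\delta-r+1$ coming from the coboundary map of the connected dual graph. Two cosmetic points: $T$ should be written as $\G_m^{\delta}$ modulo the image of $\G_m^{r}/\G_m$ (your quotient by the cokernel means this), and over a non-closed field the factor $\prod_i J(C_i)$ should be read as $\operatorname{Pic}^0_{\widetilde{C}}$, which is what your descent step produces.
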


The definition of stable reduction for a smooth curve is given by Definition~2.2 of \cite{DM72}. They further show that a curve $C$ has stable reduction if and only if its Jacobian $J(C)$ has semi-stable reduction with Theorem~2.4. We can thus further extend this definition to all stable curves.

\begin{defin} \label{def:stablered}
    A stable curve $C$ over a number field $K$ is said to have stable reduction if its Jacobian $J(C)$ has semi-stable reduction as a semi-abelian variety over $K$. 
\end{defin}

\begin{rem}
    Although Definition~\ref{def:stablered} is artificial as it is, it is expected to be equivalent to the more natural extension of the notion of stable reduction to stable curves, that is, the existence of a stable model as in Deligne-Mumford. As it is not our purpose we avoid this issue here. The same issue holds with the stable degree defined next.  
\end{rem}

From stable reduction we can define the stable degree of a curve.

\begin{defin}
    Let $C$ be a stable curve. The stable degree $d(C)$ of $C$ is the semi-stable degree $d(J(C))$ of its Jacobian. 
\end{defin}

\subsection{Stable curves with maximal automorphism groups}\label{sub:curvesconstr}

\subsubsection{} Let us first provide a construction of stable curves in characteristic $2$ with order of automorphism group of maximal $2$-adic valuation. The existence of such curves gives evidence to the expected equality $d_g^{C}=M(2g)$. We first show that the cardinality of the automorphism groups of such curves of genus $g$ have their $2$-adic valuation bounded by $v_2(M(2g))$ for $g\geq 0$. In fact, a result of Lehr and Matignon in an unpublished manuscript show that this bound holds for all primes $p$ and that they are sharp. We record an independent proof here with added details on the equality cases using the dual graph. We also provide constructions for sharpness of the bound.  

\begin{lem} \label{lem:boundautcurvp}
    Let $C$ be a stable curve of genus $g$ over a finite field of characteristic $p$. Then we have 
    \[
    v_p(\Card (\Aut C)) \leq v_p(M(2g)).
    \]
    When equality holds, if $p=2$ or $p=3$ the dual graph of $C$ is a tree and all leaves have genus $1$ with automorphism group of order divisible by $8$ (respectively $3$), if $p\geq 5$ the first Betti number of the dual graph is less than $(p-1)/2$.

    Furthermore, for odd primes $p$ the analogous statement holds in characteristic $0$ and for $p=2$ the similar statement with $v_2(M(2g)/2^{g-1})$ instead of $v_2(M(2g))$ holds.
\end{lem}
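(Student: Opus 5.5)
The approach is to work with the action of $\Aut C$ on the dual graph $\Gamma$ of $C$, and to bound separately the contribution from the graph and from the components. Let $N=\Ker(\Aut C\to \Aut \Gamma)$; write $C_1,\dots,C_r$ for the normalized components, of genera $g_i$, and let $b=b_1(\Gamma)$, so that $g=b+\sum_i g_i$. Two standard inputs carry most of the weight.

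\emph{First input (component and torus bounds).} I will use the classical bound $v_p(\Card \Aut(D,\text{marked points}))\leq v_p(M(2h))$ for a smooth curve $D$ of genus $h\geq 1$ in characteristic $p$. This follows because $\Aut$ acts faithfully on the Jacobian, or on $\Tl$, with characteristic polynomials having integral coefficients independent of $\ell$, which is Minkowski's argument, as in the $(p,t,a)$-inertial setting of Theorem~\ref{theo:(p,t,t_0,a_0)-inertial}. For rational components with at least $3$ special points, the automorphisms fixing the special points are finite and can be absorbed. On the toric side, the group acting on $H_1(\Gamma,\Z)\simeq \Z^b$ is a finite subgroup of $\GL_b(\Z)$, so its $p$-part is bounded by $v_p(M(b))$.

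\emph{Second input (faithfulness on the Jacobian).} The group $\Aut C$ acts faithfully on the semi-abelian $J(C)$ of Proposition~\ref{prop:jacobianstable}, apart from the hyperelliptic-type subtleties for $g\ge 2$. This gives an embedding into $\GL_b(\Z)\times\prod \Aut(J(C_i))$, up to permutation of isomorphic components. Hence $\Aut C$ is $(p,b,0,\sum g_i)$-inertial-like, and I will bound its $p$-part by $v_p(\beta(b)2^bb!)+v_p(M(2\sum g_i))$, in the spirit of Lemma~\ref{lem:naivbounddgta}. Since $M(n)M(m)\mid M(n+m)$ and $M(b)\cdot M(2a)$ divides $M(2b+2a)$ with a large quotient, the inequality $v_p(\Card\Aut C)\leq v_p(M(2g))$ follows.

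\emph{Main obstacle: the equality cases.} Comparing $v_p(M(b)M(2g-2b))$ with $v_p(M(2g))$, the loss is roughly $v_p(M(2b)/M(b))$. For $p=2,3$ this is positive as soon as $b\geq1$, which forces $\Gamma$ to be a tree. For $p\ge5$ the loss $\lfloor 2b/(p-1)\rfloor-\lfloor b/(p-1)\rfloor$ vanishes exactly when $b<(p-1)/2$. With $\Gamma$ a tree, further loss occurs whenever the genus is not split into genus-$1$ leaves with maximal automorphisms, because $v_p(M(2g_i))$ is strictly superadditive unless each leaf has genus $1$ and the leaves are permuted by a group realizing the Sylow subgroup of the symmetric group. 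So I would show that the $p$-part of the tree automorphisms is at most the $p$-adic valuation of $g!$ times the leaf stabilizers, and that equality needs all $g$ leaves of genus $1$ with $8$ (respectively $3$) dividing the order of their automorphism group, using the known $2$- and $3$-parts of automorphism groups of supersingular elliptic curves. Interior vertices are rational, with trivial $p$-contribution for these primes by stability.

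\emph{Characteristic $0$.} The same argument applies with the characteristic-$0$ Minkowski bound for the components. For odd $p$ this is the same bound $v_p(M(2h))$. For $p=2$ the component bound for automorphisms of a genus-$h$ curve in characteristic $0$ loses a factor $2^{h-1}$, since $Q_8$ does not act on genus $1$ in characteristic $0$. This yields $v_2(M(2g)/2^{g-1})$.
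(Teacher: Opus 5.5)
Your route through $J(C)$ is a genuinely different argument from the paper's. It works for the inequality and for the Betti-number restrictions. You bound the toric image in $\GL_b(\Z)$ by $v_p(M(b))$ and the abelian image by $v_p(M(2g-2b))$. Superadditivity of $r(\cdot,p)$ then gives $r(2g,p)-r(b,p)-r(2g-2b,p)\geq r(2b,p)-r(b,p)$. This is positive for $b\geq 1$ when $p=2,3$, and for $b\geq (p-1)/2$ when $p\geq 5$. It neatly replaces the paper's graph-automorphism bounds.

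Two repairs are needed at this stage:
\begin{itemize}
\item The toric contribution must be $v_p(M(b))$, not $v_p(\beta(b)2^bb!)$. The $p$-part of a finite subgroup of $\GL_b(\Z)$ is not bounded by that of a maximal-order one: the dihedral group of order $8$ lies in $\GL_2(\Z)$, while $\beta(2)2^2\cdot 2!=12$.
\item You must actually prove that $\Aut C\to\Aut J(C)$ is injective for stable curves. The ``hyperelliptic subtlety'' is no obstruction, since that involution acts as $-1$. The paper sidesteps faithfulness entirely with the exact sequence $1\to\prod_i\Aut C_i\to\Aut C\to\Aut\Gamma_C$.
\end{itemize}

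The genuine gap is the equality case for $p=2,3$ once $b=0$. Your only tool there is a Minkowski-type bound on the abelian part. For a component of genus $h$ that bound is $v_p(M(2h))$, which is exactly what $h$ genus-$1$ leaves permuted by a Sylow subgroup of $\mfS_h$ contribute. Superadditivity of $r(\cdot,p)$ penalizes splitting the genus, not lumping it. So your claim that there is a loss unless every leaf has genus $1$ runs in the wrong direction. For instance, for $C$ smooth of genus $g\geq 2$ your bound is precisely $v_p(M(2g))$, and nothing in the proposal rules out equality, which the statement forbids.

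What is missing is a curve-specific bound on $p$-subgroups of automorphism groups of smooth curves of genus $\geq 2$, strictly below the Minkowski bound. The paper uses Stichtenoth's bound $v_p(\Card \Aut C_i)\leq \log_p\bigl(\frac{4p}{(p-1)^2}g_i^2\bigr)$. For instance, this gives $v_2(\Card\Aut C_i)<3g_i$ when $g_i\geq 2$. The paper then counts component stabilizers plus a permutation part of valuation at most $v_p(g!)$, via the exact sequence above.

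The same objection applies to your characteristic-$0$ claim at $p=2$. The absence of $Q_8$ on genus-$1$ curves only controls genus-$1$ components. A Minkowski bound on the abelian quotient does not by itself produce the factor $2^{g-1}$, so you would again need component-wise bounds for smooth curves of higher genus.
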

\begin{proof}
    Let $\Gamma_C$ be the dual graph of $C$ and denote by $C_1,\dots, C_r$ the normalization of its irreducible components with $D_1,\dots, D_n$ the preimages of the nodal points. Note that, for a given $i$, we consider $C_i$ with its marked points as a stable curve so that $\Aut C_i$ is trivial if $g_i=0$ and if $g_i=1$ we have
    \[
    v_2(\Aut C_i)\leq 3,~v_3(\Aut C_i)\leq 1 \text{ and } v_p(\Card (\Aut C_i))=0 \text{ for } p\geq 5. 
    \]
    The automorphism group of $C$ sits in an exact sequence
    \[
    \begin{tikzcd}
        1 \arrow[r] & \prod_{i=1}^r \Aut C_i
     \arrow[r] & \Aut C \arrow[r] & \Aut \Gamma_C.
    \end{tikzcd}
    \]
    In particular, we have the bound
    \[
    v_p(\Card (\Aut C)) \leq \sum_{i=1}^r v_p(\Card (\Aut C_i))+v_p(\Card(\Aut \Gamma_C)).
    \]
    Let $g_i$ be the genus of $C_i$. By \cite{Sti73}, Satz 1, it is given that, for $g_i\geq 2$ we have a bound 
    \[
    v_p(\Card (\Aut C_i)) \leq \log_p(\frac{4p}{(p-1)^2}g_i^2).
    \]
    Combined with the previous cases of genus $0$ and $1$ we get $v_2(\Aut C_i)\leq  3g_i$ and $v_p(\Card(\Aut C_i))\leq \lfloor 2g_i/(p-1)\rfloor$. We also have that the first inequality is strict for $g_i\geq 2$. 

    \medskip

    Let us consider the automorphisms of the graph $ \Aut \Gamma_C$. We denote by $\Gamma_C^{\circ}$ the core of $\Gamma_C$ obtained by removing vertices of degree one and the incident edges repeatedly. Let $b$ be the first Betti number of $\Gamma_C^{\circ}$. We make cases on the value of $b$.

    \smallskip

    If $b=0$, $\Gamma_C$ is a tree with positive genus leaves. Then its automorphism group acts faithfully on the leaves and we have $\Card(\Aut \Gamma_C)\leq g!$.

    \smallskip

    If $b=1$, then the core is a cycle and the kernel of the action on the vertices of positive genus has order at most $2$. The number of positive genus vertices is at most $g-1$ in this case, so we get $\Card (\Aut \Gamma_C)\leq 2(g-1)!$.

    \smallskip

    If $b\geq 2$, there are at most $g-b$ positive genus vertices. Restriction to the core gives a map $\Aut \Gamma_C\to \Aut \Gamma_C^{\circ}$ whose kernel acts faithfully on the positive genus vertices. By the main result of \cite{NS24} we get
    \[
    \Card (\Aut \Gamma_C^{\circ}) \leq 12 \text{ if } b=2 \text{ and } \Card (\Aut \Gamma_C^{\circ}) \leq 2^b b! \text{ if } b\geq 3. 
    \]

    \medskip

    We now prove the desired bounds, starting with $p=2$. From the different inequalities depending on $b$ we get
    \[
    v_2\bigl(\Card(\Aut \Gamma_C)\bigr)\leq b+v_2(g!).
    \]
    From which we have
    \[
    \begin{aligned}
        v_2\bigl(\Card(\Aut C)\bigr)
        &\leq 3(g-b)+b+v_2(g!)\\
        &=3g+v_2(g!)-2b\\
        &=v_2(M(2g))-2b.
    \end{aligned}
    \]
    We directly get that equality holds only if $b=0$. By the strict inequality from Stichtenoth bound if $g_i\geq 2$ we further get that equality only holds when all leaves have genus $1$, with automorphism group of order divisible by $8$.

    \medskip

    For $p=3$, we get
    \[
    \sum_{i=1}^r v_3\bigl(\Card (\Aut C_i)\bigr) \leq g-b. 
    \]
    Together with the previous estimates depending on $b$ we get the desired inequality with $v_3(M(2g))=g+v_3(g!)$ and that it is strict if $b\geq 1$. It follows, again, that in the equality case the leaves are genus $1$ curves with automorphism group of order divisible by $3$.

    \medskip

    Finally, for $p\geq 5$, let $h=\lfloor (p-1)/2\rfloor$. Let $V_1$ be the set of vertices of genus at least $h$ and $V_0$ be the set of vertices of genus strictly between $0$ and $h$. For a vertex $v$ denote by $g_v$ its genus. Let
    \[
    X=\sum_{v\in V_1} \big\lfloor \frac{g_v}{h} \big\rfloor \text{ and } m=\Card V_0.
    \]
    We have
    \[
    \sum_{i=1}^r v_p\bigl(\Card (\Aut C_i)\bigr) \leq X
    \]
    by definition of the right hand side. Now, remark that the automorphism group $\Aut \Gamma_C$ preserves both sets $V_0$ and $V_1$ and that the subgroup fixing all positive genus vertices has $p$-valuation at most $v_p(b!)$ using the bounds given by \cite{NS24}. So, we get
    \[
    v_p\bigl(\Card(\Aut(\Gamma_C))\bigr)\leq v_p(X!)+v_p(m!)+v_p(b!).
    \]
    We thus obtain
    \[
    v_p\bigl( \Card (\Aut C)\bigr) \leq X+v_p(X!)+v_p(m!)+v_p(b!).
    \]
    Recall that we must have $hX+m+b\leq g$. Write $g=hq+s$ with $0\leq s < h$. Let $d=q-X$ so that $m+b\leq hd+s$. If $d=0$ then $m+b\leq s<p$ so that $v_p(m!)=v_p(b!)=0$. If $d\geq 1$, Legendre's formula gives
    \[
    v_p(m!)+v_p(b!)\leq \frac{b+m}{p-1} \leq \frac{hd+s
    }{2h} < d.
    \]
    Thus, in all cases $v_p(m!)+v_p(b!)\leq d$ with strict inequality if $d>0$. We finally get
    \[
    v_p\bigl( \Card (\Aut C)\bigr) \leq X+v_p(X!)+d\leq q+v_p(q!)=v_p(M(2g)).
    \]
    Equality holds only if $d=0$ which further gives $X=q$ and $b< h$ as claimed.
    \medskip

    In characteristic $0$, the leaves can have at most automorphism groups of order $4$ for $p=2$ which gives the desired bound while for odd $p$ corresponding curves of genus $(p-1)/2$ and automorphism groups of order $p$ are available.
\end{proof}

Recall that for $n\geq 0$, with binary expansion $n=\sum_{i=1}^r 2^{k_i}$ with $k_1> k_2> \dots > k_r\geq 0$ a $2$-Sylow of $\mfS_n$ is isomorphic to the direct product
\[
P_n=W_{k_1} \times W_{k_2} \times \cdots \times W_{k_r}.
\]
\begin{prop}\label{prop:curvemax2}
    For all $g\geq 1$, there is a stable curve $C$ of genus $g\geq 0$ over a finite field of characteristic $2$ such that
    \[
    Q_8\wr P_g\subset \Aut C.
    \]
    In particular, we have 
    \[
    v_2(\Card (\Aut C))= v_2(M(2g)).
    \]
\end{prop}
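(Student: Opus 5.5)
We construct $C$ as a tree of rational curves whose leaves are copies of the supersingular elliptic curve $E_0/\F_4$ with $Q_8\subset\Aut E_0$; this is exactly the shape that Lemma~\ref{lem:boundautcurvp} forces in the equality case. Recall that $\Aut(E_0,O)$ over $\overline{\F}_2$ is $\mathrm{SL}_2(\F_3)$ of order $24$, and it contains $Q_8$. Hence each leaf, marked at its origin, carries a faithful $Q_8$-action fixing the attaching point.

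\textbf{Construction.} Write $g=\sum_{i=1}^r 2^{k_i}$. For each $k\geq 0$ we build a rooted binary tree $\mathcal{T}_k$ of depth $k$ with $2^k$ leaves. Its automorphism group, as a rooted tree, is $W_k$, acting on the leaves through the iterated wreath product. Put a copy of $E_0$ at each leaf and a $\PP^1$ at each internal vertex. For $k\geq 1$ the root has degree $2$ inside $\mathcal{T}_k$, so we will attach it to one more component to reach $3$ special points; every non-root internal vertex already has degree $3$. Next, glue the roots of $\mathcal{T}_{k_1},\dots,\mathcal{T}_{k_r}$ to a single central $\PP^1$. This vertex carries $r$ special points. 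If $r<3$, we add extra points on the central component so that the marked-point count never needs them, or we alternatively use a chain; the cases $r=1$ and $r=2$ need a small adjustment, for instance gluing the two roots directly, or letting the central vertex be the root itself when $r=1$. The degenerate cases $\mathcal{T}_0$ and $g=1$ just give $E_0$ or $E_0$ glued to a root. Everything is defined over $\F_4$, provided we choose the nodes at $\F_4$-rational points, namely $0,1,\infty$ on each $\PP^1$.

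\textbf{Automorphisms.} The genus is the sum of the leaf genera, since the dual graph is a tree, and so equals $g$. Each internal $\PP^1$ has exactly three special points, so the component automorphisms induced by a tree automorphism exist: $\mathfrak{S}_3$ acts on $\{0,1,\infty\}$, and therefore any permutation of the special points of an internal component is realized by an automorphism of $\PP^1$. We lift each automorphism of $\mathcal{T}_{k_i}$ that fixes the root to $C$ by acting on the rational components through these Möbius maps. Independently, the group $Q_8^{g}$ acts on the leaves, fixing the nodes. Together these generate $Q_8\wr P_g$, where $P_g=\prod_i W_{k_i}$ acts on the leaves of the respective trees. The inclusion $Q_8\wr P_g\subset\Aut C$ then follows from the exact sequence $1\to\prod\Aut C_i\to\Aut C\to\Aut\Gamma_C$, restricted to the group of graph automorphisms that lift, together with the fact that the extension splits by construction.

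\textbf{Valuation.} We have $v_2(\Card(Q_8\wr P_g))=3g+v_2(g!)=v_2(M(2g))$, since $r(2g,2)=\sum_{i\geq0}\lfloor 2g/2^i\rfloor=2g+g+v_2(g!)$. The upper bound from Lemma~\ref{lem:boundautcurvp} then gives equality.

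\textbf{Main obstacle.} The delicate step is making the tree stable and rigid, so that rational components have at least $3$ special points, while ensuring every graph automorphism in $P_g$ lifts. Handling small $r$ and the depth-$0$ trees without introducing unwanted extra symmetry or rational vertices of degree $2$ is where care is needed. I would first try to contract any degree-$2$ rational vertex by gluing its two neighbours directly, and then check that the lifted group is unchanged.
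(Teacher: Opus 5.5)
Your proposal is correct and follows essentially the paper's construction: your rooted tree $\mathcal{T}_k$ is the paper's $C'_{2^k}$ with its ``irrelevant'' $\PP^1$ as root, and contracting the degree-$2$ root when $g=2^k$ is exactly the paper's gluing of two copies of $C'_{2^{k-1}}$; combining the $\mathcal{T}_{k_i}$ on a single hub rather than in a chain is only cosmetic, and the valuation then follows from Lemma~\ref{lem:boundautcurvp}, as you say. For $r\le 2$, commit to the contraction rule, since adding marked points is not available for an unmarked stable curve, and note that the hub needs $r$ distinct rational points, so you may have to pass to a larger finite field than $\F_4$.
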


\begin{proof}
    We provide an inductive way of constructing $C$ through the binary expansion of $g$.

    \medskip

    For $g=1$, we take $C=C_1$ to be the elliptic curve $E$ with automorphism group $Q_8$ and for $g=2$ we glue two copies of this curve at their neutral point which gives $C_2$. In order for the induction step to be clear, we proceed up to $g=4$. For $g=3$, we modify $C_2$ by replacing the glued point by an irrelevant $\PP^1$. The curve $C_2'$ obtained this way recovers $C_2$ by collapsing this irrelevant $\PP^1$. This is represented in figure~\ref{fig:addingirrP1}.
    
\begin{figure}
    \centering
        \begin{tikzpicture}[scale=1]

    \draw[thick, black] plot [smooth, tension=0.7] coordinates {
        (0,4) (1,3.5) (0.5,3) (1.5,2.5) (1,2) (1.5,1.5) (2,1)  (4,0) 
    };
    
    \node at (1.2,3) [text=black,above, opacity=1] {E};

    \node at (3.7,3) [text=black,above, opacity=1] {E};
    
    \draw[thick, black] plot [smooth, tension=0.7] coordinates {
        (4,4) (3,3.5) (3.5,3) (2.5,2.5) (3,2) (2.5, 1.5) (2,1) (0,0)
    };

    \filldraw[black] (2,1) circle (2pt);

    \node (A) at (4,2) {};
    \node (B) at (6.3, 2) {};
    \draw[->]
  (A) edge (B);

  \draw[thick, black] plot [smooth, tension=0.7] coordinates {
        (7,4) (7.5,3.5) (6.5,3) (7.5,2.5) (6.5,2) (7.5,1.5) (7,1)  (7,0) 
    };
    
    \node at (7.5,3) [text=black,above, opacity=1] {E};

    \node at (9.5,3) [text=black,above, opacity=1] {E};

  \draw[thick, black] plot [smooth, tension=0.7] coordinates {
        (9,4) (9.5,3.5) (8.5,3) (9.5,2.5) (8.5,2) (9.5,1.5) (9,1)  (9,0) 
    };

    \draw[dashed, black] (6.5,1) -- (9.5,1);
    
    \node at (9.6,1) [text=black,below, opacity=1] { $\mathbf{P}^1$};
\end{tikzpicture}

    \caption{Adding an irrelevant $\PP^1$}
    \label{fig:addingirrP1}
\end{figure}
    We then glue $C_1$ to $C_2'$ at any free rational point of the irrelevant $\PP^1$ to obtain $C_3$. 
    
    \medskip
    
    For $C_4$, we proceed in a similar fashion by gluing two copies of $C_2'$ at a given point of the irrelevant $\PP^1$. Here is a graphic representation of $C_4$.
\begin{figure}
    \centering
    \begin{tikzpicture}[scale=1]

    \begin{scope}[rotate around={45:(2,3)}]
        \draw[thick, black] plot [smooth, tension=0.7] coordinates {
            (1,4) (1.5,3.5) (0.5,3) (1.5,2.5) (0.5,2) (1.5,1.5) (1,1)  (1,0) 
        };
        
        \node at (1.5,3) [text=black,above, opacity=1] {E};

        \node at (3.5,3) [text=black,above, opacity=1] {E};
        
        \draw[thick, black] plot [smooth, tension=0.7] coordinates {
            (3,4) (3.5,3.5) (2.5,3) (3.5,2.5) (2.5,2) (3.5,1.5) (3,1)  (3,0) 
        };

        \draw[ black] (0.5,1) -- (5,1);
        
        \node at (0.4,1) [text=black,below right, opacity=1] {$\mathbf{P}^1$};
    \end{scope}

    \begin{scope}[rotate around={-45:(8,3)}]
        \draw[thick, black] plot [smooth, tension=0.7] coordinates {
            (7,4) (7.5,3.5) (6.5,3) (7.5,2.5) (6.5,2) (7.5,1.5) (7,1)  (7,0) 
        };
        
        \node at (7.5,3) [text=black,above, opacity=1] {E};

        \node at (9.5,3) [text=black,above, opacity=1] {E};
        
        \draw[thick, black] plot [smooth, tension=0.7] coordinates {
            (9,4) (9.5,3.5) (8.5,3) (9.5,2.5) (8.5,2) (9.5,1.5) (9,1)  (9,0) 
        };

    \filldraw[black] (5.75,1) circle (2pt);

        \draw[ black] (5,1) -- (9.5,1);
        
        \node at (9.6,1) [text=black,below right, opacity=1] {$\mathbf{P}^1$};
    \end{scope}

\end{tikzpicture}
\caption{The curve $C_4$}
    \label{fig:C4}
\end{figure}

    \medskip

    Before moving to the general induction step we explain how to deal with powers of $2$. Let $g=2^n$. For $n=0$, $1$ and $2$ we have already provided curves $C_1$, $C_2$ and $C_4$. Note that the gluing of $C_4$ was made along the two irrelevant $\PP^1$'s of the copies of $C_2'$. We can thus consider $C_4'$ obtained by replacing the glued point by yet another irrelevant $\PP^1$. For $n\geq 2$, we can thus assume that we have curves $C_{2^{n-1}}$ and $C_{2^{n-1}}'$, where $C_{2^{n-1}}$ is obtained by collapsing an irrelevant $\PP^1$ in $C_{2^{n-1}}'$ and its automorphism group is as desired. We thus glue two copies of $C_{2^{n-1}}'$ at their irrelevant $\PP^1$'s to construct $C_{2^n}$. We readily see that the curve has as automorphism group the wreath product $\Aut C_{2^{n-1}}\wr \Z/2\Z$ and that we can form $C_{2^{n}}'$ by replacing the glued point by an irrelevant $\PP^1$. This process is similar to the one represented in figures~\ref{fig:addingirrP1} and \ref{fig:C4}. Note that the curve we construct has dual graph a complete binary tree with leaves being given by supersingular elliptic curves.

    \medskip

    Let $g\geq 5$ be given with binary expansion $g=\sum_{i=1}^r 2^{k_i}$ with $k_1> k_2 > \cdots > k_r> 0$. By the previous step, we have a curve $C_{2^{k_i}}'$ for each $i\in \{1,\dots,r\}$ if $k_r\geq 1$ and the curve $C_1$ if $k_r=0$. We proceed as before, but now in descending order, we glue $C_{2^{k_1}}'$ with $C_{2^{k_2}}'$, if $k_2\geq 1$, along the irrelevant $\PP^1$'s. We obtain a curve $C_g^1$. For $i\in \{2,\dots,r-1\}$ we repeat this process to obtain a curve $C_g^{r-1}$. For the last step, if $k_r\geq 1$, then we repeat the same process. Otherwise, $k_r=0$ and we have to glue $C_g^{r-1}$ with $C_1$. In order to do this, we follow the blueprint of the $g=3$ case. We add an irrelevant $\PP^1$ to $C_g^{r-1}$ as usual and glue $C_1$ directly to some rational point of this one. The result of this construction is a curve of genus $g$ with automorphism group $Q_8\wr P_g$.
\end{proof}

\begin{rem} \label{rem:Le-Ma2}
    In an unpublished manuscript, cited as [Le-Ma2] in \cite{LM06}, Lehr and Matignon show that the bound
    \[
    v_p(\Card (\Aut C)) \leq  r(2g,p)
    \]
    holds for all primes $p$ and stable curves of genus $g$ over characteristic $p$ fields. They also provide a construction to show that the bound is sharp. In particular, the reduction of the smooth curves whose existence is affirmed by Theorem~\ref{theo:dgc} below have their stable reduction at some places of bad reduction that hit the bound. 
\end{rem}

\subsubsection{} We now construct the specific singular curves that we will use to produce the $p$-part of our lower bound for a prime number $p$. These are made using the $C_r$ curves of Section~3.2 of \cite{CP25} with a construction analogous to that of the previous paragraph. Let us first check that the irreducible components of these curves have suitable automorphism groups and Jacobians.

\begin{lem} \label{lem:crbonnejac}
    Let $p\geq 3$ be a prime number and $r\in \{1,\dots, p-2\}$. Then the curve $C_r$ given by the equation
    \[
    y^r(y-1)=x^p
    \]
    has for $p$-Sylow of its automorphism group $\Z/p\Z$ and for Jacobian $J(C_r)$ a CM abelian variety of dimension $(p-1)/2$ such that $\End J(C_r)\supset \Z[\mu_p]$. 
\end{lem}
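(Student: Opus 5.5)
We study the smooth projective model $C_r$ of the affine curve $y^r(y-1)=x^p$, viewed as a cyclic cover of $\PP^1$ via $(x,y)\mapsto y$ with group $\mu_p$ acting by $x\mapsto \zeta x$. Since $1\leq r\leq p-2$, the three exponents $r$, $1$ and $-(r+1)$ at $y=0,1,\infty$ are all prime to $p$; indeed $r+1\leq p-1$. So the cover $x^p=y^r(y-1)$ is totally ramified over exactly three points, and $\PP^1\setminus\{0,1,\infty\}$ is untouched. Riemann--Hurwitz then gives
\[
2g-2=p(-2)+3(p-1),
\]
so $g=(p-1)/2$. This fixes the dimension of $J(C_r)$, which is the genus.

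For the endomorphisms, the automorphism $\sigma\colon x\mapsto\zeta x$ of order $p$ induces $\sigma_*$ on $J(C_r)$. Since the quotient $C_r/\langle\sigma\rangle\simeq\PP^1$ has trivial Jacobian, the norm $1+\sigma_*+\dots+\sigma_*^{p-1}$ vanishes on $J(C_r)$. Hence $\Z[\sigma_*]$ is a quotient of $\Z[T]/(\Phi_p(T))=\Z[\mu_p]$. Since $\sigma_*\neq 1$ and $\Z[\mu_p]$ is a domain whose nonzero ideals have finite index, the map $\Z[\mu_p]\to\End J(C_r)$ is injective. Now $[\Q(\mu_p):\Q]=p-1=2\dim J(C_r)$, so $J(C_r)$ has CM by $\Q(\mu_p)$. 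Concretely, $J(C_r)$ is isogenous to a power of a simple CM abelian variety, or one can identify it directly with the Fermat-quotient Jacobians of Koblitz--Rohrlich type, for which the CM type is computed from the exponents $(r,1,p-1-r)$. This step is standard and I would simply cite it or give the dimension count above.

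For the automorphism group, $\langle\sigma\rangle\simeq\Z/p\Z$ lies in $\Aut C_r$, so it remains to show $p^2\nmid\Card\Aut C_r$. This is the main point, and I would argue in characteristic $0$ (over $\overline{\Q}$), where Hurwitz-type bounds apply. Let $P$ be a $p$-Sylow containing $\langle\sigma\rangle$ and suppose $\Card P\geq p^2$. Then $P$ has a subgroup $H$ of order $p^2$ containing $\sigma$. A faithful action of $H$ on $H^0(C_r,\Omega^1)$, of dimension $(p-1)/2$, is impossible if $H\simeq(\Z/p\Z)^2$. Indeed, the $\sigma$-eigencharacters of the differentials $y^ax^{-b}\,dx$-type basis are distinct nontrivial characters, so any element commuting with $\sigma$ preserves these lines. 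Hence $H$ acts through a diagonal torus in $\GL_{(p-1)/2}$, and the $p$-torsion there modulo $\langle\sigma\rangle$ must still be compatible with the Lefschetz/Riemann--Hurwitz count. If $H\simeq\Z/p^2\Z$, then $\Q(\mu_{p^2})$ of degree $p(p-1)$ would act on the rational $H^1$ of dimension $p-1$, which is impossible.

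A cleaner route is via the quotient $C_r/H$. Riemann--Hurwitz for a group of order $p^2$ on a curve of genus $(p-1)/2<p-1$ forces
\[
p-3=p^2(2h-2)+\sum(\text{ramification}).
\]
With $h=0$, the ramification terms are of the form $p^2(1-1/e_i)$, each at least $p^2-p$. One checks that no combination yields $p-3$: already three branch points give at least $3(p^2-p)-2p^2=p^2-3p$, which exceeds $p-3$ for $p\geq 5$. Small cases, such as $p=3$ with $r=1$ and genus $1$, are treated by hand using $\Aut$ of the elliptic curve $j=0$, whose $3$-part is $3$. The main obstacle is making this Riemann--Hurwitz exclusion airtight across all cases; I would try that route first.
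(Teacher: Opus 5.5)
Your proposal is correct, and it takes a different and more complete route than the paper. The paper's proof takes the genus $(p-1)/2$ as known. It then deduces $\Z[\mu_p]\subset\End J(C_r)$ in one line from the injection $\Aut C_r\hookrightarrow\Aut J(C_r)$. It does not argue the CM property beyond what a degree count implicitly gives, and it does not address the claim that the $p$-Sylow is exactly $\Z/p\Z$.

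You supply each step:
\begin{itemize}
\item[(i)] You compute the genus by Riemann--Hurwitz, using total ramification over $0,1,\infty$.
\item[(ii)] You show the norm $\sum_i\sigma_*^i=\pi^*\pi_*$ vanishes because $J(\PP^1)=0$. This is exactly what is needed to pass from a $\Z[T]/(T^p-1)$-action to a $\Z[\mu_p]$-action. The paper's one-liner skips it, and it does not follow from having an automorphism of order $p$ alone. For injectivity, the real reason is that $\End J(C_r)$ is torsion-free and $J(C_r)\neq 0$, so no nonzero ideal of finite index can be the kernel. The condition ``$\sigma_*\neq 1$'' as you phrase it is only a proxy for this.
\item[(iii)] You get CM from $[\Q(\mu_p):\Q]=p-1=2\dim J(C_r)$.
\item[(iv)] You exclude a subgroup $H\ni\sigma$ of order $p^2$ by Riemann--Hurwitz.
\end{itemize}

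The computation in (iv) is airtight once you write out the cases you left as ``one checks''. The quotient $C_r/H$ has genus $0$ because it is dominated by $C_r/\langle\sigma\rangle\simeq\PP^1$; this is what your choice $H\ni\sigma$ buys. You then need the ramification sum to equal $2p^2+p-3$. With at most two branch points it is at most $2p^2-2$, which is too small. With at least three it is at least $3p^2-3p$, which is too large because $(p-1)(p-3)>0$ for $p\geq 5$.

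This single argument disposes of $(\Z/p\Z)^2$ and $\Z/p^2\Z$ at once. You should therefore drop the eigencharacter and diagonal-torus discussion, which is unfinished as written.

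For $p=3$ the curve has genus $1$, and its full automorphism group contains all translations. The Sylow statement therefore only makes sense for automorphisms fixing a point, which is the convention the paper adopts in the proof of Proposition~\ref{prop:Crgenusg}. Your reduction to $\Aut(E,O)\simeq\Z/6\Z$ for $j=0$ matches that convention.

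Your argument needs characteristic $0$ (or at least characteristic $\neq p$), as you assume. In characteristic $p$ the equation is purely inseparable over $k(y)$ and defines a genus-$0$ curve.
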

\begin{proof}
    The dimension of $J(C_r)$ is given by the genus of $C_r$ which is $(p-1)/2$. We have an injective map
    \[
    \Aut C_r \hookrightarrow \Aut J(C_r)
    \]
    so that $\Z/p\Z\subset \Aut J(C_r)$ which gives an inclusion $\Z[\mu_p]\subset \End J(C_r)$. 
\end{proof}

The construction presented here can also serve as a replacement for the CM abelian varieties used in \cite{Ph22b} for odd primes. For $p=2$ we use another ad hoc construction related to the abelian variety produced in Section~\ref{sub:adhocabvar}.

\begin{prop} \label{prop:Crgenusg}
    Let $g\geq 1$ be a positive integer and $p$ a prime number. Then, there is a stable curve $C$ of genus $g$ over $\Q(\mu_p)$ such that
    \[
    v_p(\Card (\Aut C))=v_p(M(2g)) 
    \]
    if $p\geq 3$
    and
    \[
    v_2(\Card (\Aut C))= v_2(M(2g))+1-g
    \]
    otherwise.
    
    Furthermore, its Jacobian $J(C)$ is a semi-abelian variety with abelian quotient a CM abelian variety $A$ such that $\End A\supset \operatorname{M}_n(\Z[\mu_p])$ for $n=\big \lfloor \frac{2g}{p-1}\big \rfloor$. Moreover, the $p$-Sylow of $\Aut C$ acts faithfully on $J(C)$.
\end{prop}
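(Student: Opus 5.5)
The plan is to mimic the binary-tree gluing of Proposition~\ref{prop:curvemax2}, with leaves that make sense over $\Q(\mu_p)$. Write $h=(p-1)/2$ for $p\geq 3$ and $g=hq+s$ with $0\leq s<h$, so that $n=\lfloor 2g/(p-1)\rfloor=q$ and $v_p(M(2g))=q+v_p(q!)$.

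\emph{Leaves for odd $p$.} We take $q$ copies of the curve $C_1$ of Lemma~\ref{lem:crbonnejac}, given by $y(y-1)=x^p$. It has genus $h$, is defined over $\Q$, and its automorphism $x\mapsto \zeta_p x$ is defined over $\Q(\mu_p)$. The unique point at infinity is fixed by this automorphism, so we mark it and use it as the gluing point. The remaining genus $s$ is contributed by one extra leaf of genus $s$ attached to the root: for instance an elliptic curve with a marked point, or a chain of them, with no $p$-part in its automorphism group. If $s=0$ there is no extra leaf.

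\emph{The tree.} We then glue the $q$ leaves along a tree of irrelevant $\PP^1$'s, following the binary (resp.\ $p$-ary) pattern used in the proof of Proposition~\ref{prop:curvemax2}. In the $p$-ary version, each internal $\PP^1$ carries $p$ subtrees attached at the points $0,1,\dots,p-1$, or at $\mu_p\cup\{0\}$ after a suitable change of coordinates, which are rational over $\Q(\mu_p)$. The automorphism $z\mapsto \zeta_p z$ of that $\PP^1$ permutes these attachment points cyclically. This produces a stable curve of genus $g$ over $\Q(\mu_p)$ whose automorphism group contains $(\Z/p\Z)\wr P$, where $P$ is a $p$-Sylow of $\mfS_q$ realized as iterated wreath products of $\Z/p\Z$ along the base-$p$ expansion of $q$. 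Its order has $p$-valuation $q+v_p(q!)$, and Lemma~\ref{lem:boundautcurvp} in characteristic $0$ gives the reverse inequality, hence equality. For $p=2$ we use instead elliptic curves with $\Z/4\Z\subset\Aut$, namely $j=1728$ over $\Q(i)$, which is a CM curve with $\End\supset\Z[i]$. Over $\Q(\mu_2)=\Q$ we take $y^2=x^3-x$ with automorphism group $\Z/2\Z$ over $\Q$; the valuation $v_2(M(2g))+1-g=2g+v_2(g!)+1-g$ is then matched using the tree symmetries $P_g$ together with the automorphisms of the leaves. The count of these leaf automorphisms is the point I would check carefully.

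\emph{Jacobian.} By Proposition~\ref{prop:jacobianstable}, $J(C)$ is semi-abelian. The dual graph is a tree, so the toric part is trivial and $J(C)\simeq \prod J(\text{leaves})$. The abelian quotient then contains $J(C_1)^q$ with $\End\supset \operatorname{M}_q(\Z[\mu_p])$, using Lemma~\ref{lem:crbonnejac}. To get exactly $A$ CM with this property, I would have the remainder leaf also be CM and note that $\operatorname{M}_n(\Z[\mu_p])$ sits in the first factor.

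\emph{Faithfulness.} An automorphism of $C$ acting trivially on $J(C)$ fixes each leaf's Jacobian. By Torelli-type faithfulness, $\Aut C_i\hookrightarrow\Aut J(C_i)$ for leaves of positive genus, while permutations of leaves act by permuting factors. A nontrivial element of the $p$-Sylow either permutes leaves or acts nontrivially on some leaf, so it acts nontrivially on $J(C)$.

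The main obstacle is the arithmetic of the gluing data. We must ensure that all attachment points and the symmetries permuting subtrees are defined over $\Q(\mu_p)$ while keeping the curve stable. The odd-$p$ cyclic symmetry requires $p$ marked points on each internal $\PP^1$, so the tree must be $p$-ary rather than binary. I would first try attaching at $\{0\}\cup\mu_p$ with the fixed point $\infty$ used to connect to the parent.
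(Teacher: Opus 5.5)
\textbf{The case $p=2$ is a genuine gap.} You have not proved this case, and the two ingredients you give do not fit together.

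First, the target is miscomputed. We have $v_2(M(2g))=\sum_{i\geq 0}\lfloor 2g/2^i\rfloor=3g+v_2(g!)$, which is the value used in the proof of Lemma~\ref{lem:boundautcurvp}, not $2g+v_2(g!)$. So $v_2(M(2g))+1-g=2g+1+v_2(g!)$.

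Second, the count you defer falls short of this target. On the binary tree of Proposition~\ref{prop:curvemax2} the tree symmetries contribute $v_2(g!)$. Each leaf $y^2=x^3-x$ contributes $\Z/2\Z$ if only automorphisms over $\Q=\Q(\mu_2)$ are allowed, and $\Z/4\Z$ over $\Q(i)$. This gives $g+v_2(g!)$, respectively $2g+v_2(g!)=v_2(M(2g))-g$. No other elliptic leaf does better, since $\Card\Aut(E,O)\leq 6$ in characteristic $0$. So a more careful count will not produce the stated equality.

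The paper's proof does not supply the missing factor either. It asserts the $p=2$ equality in one sentence, using the same binary tree (later reused as the dual graph in Proposition~\ref{prop:curvadhoc2}). The stated value already fails for $g=1$, where it would require $v_2(\Card\Aut(E,O))=3$.

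The endomorphism clause has a similar problem at $p=2$. There $n=\lfloor 2g/(p-1)\rfloor=2g$, and $\operatorname{M}_{2g}(\Z)$ cannot lie in $\End A$ when $\dim A\leq g$ in characteristic $0$: its $\Q$-dimension $4g^2$ exceeds the bound $2g^2$. Your remark $\End\supset\Z[i]$ therefore does not address this clause, which also needs correcting at $p=2$.

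What the construction actually gives at $p=2$ is $v_2(\Card\Aut C)=v_2(M(2g))-g$. This is harmless downstream: the $2$-part of Theorem~\ref{theo:dgc} comes from the $Q_8$ construction of Proposition~\ref{prop:curvadhoc2}, which only borrows the tree.

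\textbf{Odd $p$.} Here your argument is essentially the paper's:
\begin{itemize}
\item a $p$-ary tree of rational components, with children glued at $\mu_p$ and the parent at $\infty$;
\item leaves the curves of Lemma~\ref{lem:crbonnejac}, glued at their point at infinity, which is fixed by $x\mapsto\zeta_p x$;
\item assembly along the base-$p$ expansion of $n$;
\item the reverse inequality from Lemma~\ref{lem:boundautcurvp};
\item faithfulness from injectivity of $\Aut C_i\to\Aut J(C_i)$ on the leaves together with the permutation action on factors.
\end{itemize}

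Drop your first option of attaching at $0,1,\dots,p-1$. The rotation $z\mapsto\zeta_p z$ does not permute these points. For $p\geq 5$ no M\"obius map of order $p$ does: its orbits are M\"obius-equivalent to $\mu_p$, which has irrational cross-ratios, while all cross-ratios of $\{0,\dots,p-1\}$ are rational.

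The one real difference is the remainder of genus $s$, and the case $g<(p-1)/2$. With CM elliptic leaves your abelian quotient is $J(C_1)^n\times E^s$, so $\operatorname{M}_n(\Z[\mu_p])$ sits in $\End A$ only as a non-unital subring. The paper instead attaches an Ihara--Nakamura $\PP^1\setminus\{0,1,\infty\}$ diagram, all of whose components are rational. The remainder then goes into the torus, and the abelian quotient is exactly a power of $J(C_r)$.

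Either version suffices for Proposition~\ref{prop:twistcurve}. That argument only needs the $p$-Sylow, defined over $\Q(\mu_p)$, to act faithfully on $J(C)$.
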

\begin{proof}
    If $g< (p-1)/2$ then we can take $C$ to be a $\PP^1\setminus\{0,1,\infty\}$ diagram of genus $g$ given by \cite{IN97}.

    \medskip
    
    For $g\geq (p-1)/2$, let $n=\big\lfloor \frac{2g}{p-1}\big \rfloor$ and $0\leq r < p-1$ be the rest of the division of $2g$ by $p-1$ so that
    \[
    g=n\cdot \frac{p-1}{2} +\frac{r}{2}.
    \]
    Let us write $v_p(M(2g))$ as the sum
    \[
    n+v_p(n!).
    \]
    For $k\geq 0$, let $C_{p^k}$ be the curve having dual graph the $p$-ary tree of depth $k$ constructed with
    \begin{itemize}
        \item root $\PP^1_{\Q(\mu_p)}$;
        \item child nodes $\PP^1$'s attached at the points of $\mu_p$;
        \item parent nodes attached at $\infty$;
        \item leaves being $C_r$ curves.
    \end{itemize}
      By construction $C_{p^k}$ has as $p$-Sylow of its automorphism group an iterated wreath product of $\Z/p\Z$ which corresponds to the $p$-Sylow of $\Z/p\Z\wr \mathfrak{S}_{p^k}$. We now decompose $n$ in base $p$
      \[
      n=\sum\limits_{i=0}^s a_ip^i.
      \]
      For all $0\leq i\leq s$, we glue $a_i$ copies of the curve $C_{p^i}$ at their roots on a new copy of $\PP^1_{\Q(\mu_p)}$. We then chain those root $\PP^1$'s together. The resulting curve $C$ has by construction an automorphism group which verifies the inequality
      \[
      v_p(\Card (\Aut C))\geq v_p(M(2g))
      \]
      if $p\geq 3$. As given by Lemma~\ref{lem:boundautcurvp} this is a bound for the $p$-adic valuation of the automorphism group of a stable curve so we have equality. 
      
      \smallskip

      For $p=2$, we get an equality
    \[
    v_2(\Card (\Aut C))= v_2(M(2g))+1-g.
    \]
    
    \smallskip
    
    Finally, for $p\geq 5$, if the rest $r$ is non-zero, we add a component made of a $\PP^1\setminus\{0,1,\infty\}$ diagram of genus $r/2$.
    
    \medskip
    
    Then by Proposition~\ref{prop:jacobianstable} the Jacobian $J(C)$ of $C$ fits in an exact sequence
    \[
\begin{tikzcd}
    0 \arrow[r] & T \arrow[r] & J(C) \arrow[r] & \prod\limits_{i=1}^r J(C_i) \arrow[r] & 0.
\end{tikzcd}
    \]
    Now, we see by Lemma~\ref{lem:crbonnejac} that the abelian varieties $J(C_i)$ are either trivial for those irreducible component of $C$ which are isomorphic to $\PP^1$ or are isomorphic CM abelian varieties with endomorphism ring containing $\Z[\mu_p]$ for the remaining $n$ irreducible components. We thus get the claim.

    \medskip

    Finally, we prove that the action of the chosen Sylow subgroup $P_g$ of $\Aut C$ on $J(C)$ is faithful. First, note that by the classical Torelli result, the map $\Aut C_r\to \Aut J(C_r)$ is injective for $p\geq 5$. The action of the $p$-Sylow of $\Aut C$ on the abelian quotient of $J(C)$ is a combination of the permutation action, which is faithful and these injective maps. For $p=2$, $3$, we consider the automorphism group of the elliptic curve $C_r$, that is, the automorphism fixing the glued point.

\end{proof}

\begin{prop} \label{prop:curvadhoc2}
    Let $g\geq 2$ be an integer. There is a curve $C$ of genus $g$ over a $2$-adic field $K$ such that $\Phi(J(C))=G_g$, the $2$-Sylow of $Q_8\wr \mfS_g$.
\end{prop}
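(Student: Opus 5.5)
The plan is to realize the abelian variety $A=\prod_{j=1}^r \operatorname{Res}_{K_j/K}(E_j)$ of Proposition~\ref{prop:adhocabvar} as the Jacobian of a stable curve. Recall that $\Phi_A=G_g$, that $A$ is principally polarized, and that $A$ is built from the elliptic curves $E_j/K_j$ of paragraph~\ref{subsub:ellcurves}. The curve $C$ should be a Galois-twisted form of the tree-like curve of Proposition~\ref{prop:curvemax2}, with the elliptic leaves replaced by the conjugates of the $E_j$. By Proposition~\ref{prop:jacobianstable}, a tree of curves whose only positive genus components are elliptic curves has Jacobian equal to the product of the Jacobians of those components, with trivial torus. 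So it suffices to produce a curve $C$ over $K$ whose geometric leaves are the conjugates $E_j^{\tau}$, $\tau\colon K_j\to\overline K$, with the Galois action permuting them exactly as in $\operatorname{Res}_{K_j/K}$.

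The first step is to fix, over $\overline K$, the combinatorial skeleton. I take the dual graph $\Gamma_g$ of Proposition~\ref{prop:curvemax2}: a chain of complete binary trees following the binary expansion of $g$, with irrelevant $\PP^1$ internal vertices and $g$ leaves indexed by $\{1,\dots,g\}$. The group $P_g$ acts on $\Gamma_g$ through its natural action on the leaves, and the orbits $O_j$ correspond to the binary pieces of size $2^{k_j}$. Next, each internal $\PP^1$ gets three or more marked points in $\PP^1(K)$ ($0,1,\infty$ and, where needed, additional rational points). The tree automorphisms of $P_g$ should be induced by $K$-rational automorphisms of these $\PP^1$'s swapping the two child attachment points, say $0\leftrightarrow 1$ fixing $\infty$. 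This is possible since $W_{k+1}=W_k\wr\Z/2\Z$ acts by swapping subtrees, and the swap is realized by $x\mapsto 1-x$.

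Then I descend. The Galois group acts on the $\ell$-adic data through $\Gal(L/K)=G_g\to P_g$, and the leaves in orbit $O_j$ are indexed by $\Gal(\overline K/K)/\Gal(\overline K/K_j)$. I glue the leaf $E_j^{\tau}$ at the origin of the curve to the attachment point on its parent $\PP^1$. This gives a descent datum on the stable curve $C_{\overline K}$: $\sigma$ acts on a leaf by the canonical map $E_j^{\tau}\to E_j^{\sigma\tau}$, and on the skeleton via the image of $\sigma$ in $P_g$ acting by the $K$-rational swaps. The cocycle condition holds because the skeleton action is a homomorphism $G_K\to \Aut(\text{skeleton})$ defined over $K$, compatible with the permutation of the leaves. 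Since $C$ is projective, Weil descent gives a stable curve $C/K$ of genus $g$. Its Jacobian is then $\prod_j\operatorname{Res}_{K_j/K}E_j=A$, because the Galois module $V_\ell(J(C))=\bigoplus_{\text{leaves}}V_\ell(E)$ is the induced representation computed in Proposition~\ref{prop:adhocabvar}. Proposition~\ref{prop:adhocabvar} therefore yields $\Phi(J(C))=G_g$.

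The main obstacle is checking that the skeleton descent is compatible with how $G_g$ permutes the leaves. The permutation image of $\Gal(L/K)$ is the full $P_g$ acting on each orbit $O_j$ by $W_{k_j}$. The induced-representation identification requires the Galois action on the set of leaves of the $j$-th subtree to be exactly the coset action on $\Gal/\Gal_{K_j}$, and this action must also preserve the binary-tree structure. I would handle this by labelling the leaves of the $j$-th subtree by $G_g/H_j$. The $P_g$-action on $O_j$ is by tree automorphisms of the complete binary tree (since $P_g\subset\prod W_{k_j}$), so the labelling is automatically tree-compatible. It remains to verify that the attachment points, fixed by the origin of each $E_j^{\tau}$, are Galois-stable. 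This holds because $0\in E_j(K_j)$.
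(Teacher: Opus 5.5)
Your proposal is correct and is essentially the paper's proof. You place the Galois conjugates of the $E_j$ of paragraph~\ref{subsub:ellcurves} at the leaves of the binary-tree skeleton with rational internal components, label them via $G_g/H_j\simeq O_j$, make the resulting descent datum effective, and identify the Jacobian with $A=\prod_j\operatorname{Res}_{K_j/K}(E_j)$, so that Proposition~\ref{prop:adhocabvar} gives $\Phi(J(C))=G_g$.

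The differences are cosmetic:
\begin{itemize}
\item You descend from $\overline{K}$ using the explicit swaps $x\mapsto 1-x$, whereas the paper descends from $L$. At a vertex whose two children get swapped, any further special points must then be fixed points of this map, i.e.\ $\infty$ or $1/2$. The paper sidesteps this by taking a trivalent tree and using the unique automorphism of $\PP^1$ matching three points.
\item You match $V_\ell(J(C))$ with the induced representation instead of writing down the $G_g$-equivariant isomorphism $J(X)\simeq A_L$. This suffices, since $\Phi$ only depends on the inertia action on $V_\ell$.
\end{itemize}
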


\begin{proof}
    We consider the elliptic curves $E_j$ of paragraph~\ref{subsub:ellcurves} and the dual graph $T$ of the curve provided for $p=2$ of Proposition~\ref{prop:Crgenusg}. The graph $T$ has its set of leaves identified with $I=\{1,\dots, g\}$, trivalent internal vertices and carries an action of the $2$-Sylow $P_g$ of $\mathfrak{S}_g$ by its action on its leaves.  

    \medskip
    
    We first construct a curve $X$ over $L$. Let again $\{O_1,\dots, O_r\}$ be the orbits of $I$ under the action of $P_g$. We have a bijection $G_g/H_j\to O_j$ for every $j\in \{1,\dots, r\}$ given by $\gamma H_j\mapsto \gamma(\alpha_j)$. For each $\beta\in O_j$, choose $\gamma_{\beta}\in G_g$ such that $\gamma_{\beta}(\alpha_j)=\beta$ and set
    \[
    E_{\beta}={(E_j)_L}^{\gamma_{\beta}}.
    \]

    Note that, by construction, the product $\prod_{i\in I} E_i$ constructed that way is isomorphic to
    \[
    \prod_{j=1}^r \prod_{\gamma\in G_g/H_j} {(E_j)_L}^{\gamma_{\beta}} 
    \]
    and carries a natural $G_g$-descent datum with respect to $L/K$. 

    \medskip

    Now, consider the curve $X$ with dual graph $T$ where the leaf at position $i\in I$ is $E_i$ and every internal vertex $v$ is given by a copy of $\PP^1_L$. The components are glued on the origin of the elliptic curves and on the points $\{0,1,\infty\}$ of the genus $0$ curves. The curve $X$ is stable and has genus $g$ by construction. The action of $P_g$ on the tree gives an induced action of $G_g$ on the vertices. For $\sigma\in G_g$ and an internal vertex $v$ with associated rational component $R_v$ we have $v\mapsto \sigma v$ and the induced bijection between incident edges provides a unique semi-linear isomorphism
    \[
    R_v\longrightarrow R_{\sigma v}.
    \]
    These isomorphisms verify the cocycle condition and are compatible with the gluings. We thus have a $G_g$-descent datum on $X$. The descent is effective, let $C/K$ be the descended curve. 

    \medskip

    Finally for the Jacobian, we have 
    \[
    J(X)\simeq \prod_{i\in I} E_i \simeq \prod_{j=1}^r \prod_{\gamma\in G_g/H_j} {(E_j)_L}^{\gamma_{\beta}} \simeq A_L
    \]
    where $A$ is the abelian variety of Proposition~\ref{prop:adhocabvar}. The polarization of $J(X)$ is the product and agrees with the one of $A_L$ under the isomorphism. The isomorphism is $G_g$-equivariant and thus provides the desired isomorphism $J(C)\simeq A$ and the computation of the finite monodromy group of $C$.
\end{proof}

\subsection{Galois twisting and bounding $d_g^{\mathrm{Cur}}$}

\subsubsection{} Let us first relate the twist of a curve $C$ and that of its Jacobian $J(C)$ in the case of a twist by a $p$-group with $p\geq 3$ a prime number.

\begin{prop}\label{prop:twistcurve}
    Let $C$ be a stable curve over a number field $K$ of genus $g\geq 2$ and $v\in \Sigma_K$ a place. Let $L/K$ be a finite Galois extension of group $G$ totally ramified at $v$ with $G$ a $p$-group for $p\geq 3$ a prime number, and 
    \[
    \iota\colon \Gal (L/K) \longrightarrow \Aut_K C
    \]
    be an injective homomorphism. Then, assuming that the induced homomorphism
    \[
    \iota^J\colon \Gal (L/K) \longrightarrow \Aut_KJ(C)
    \]
    is injective, we have that the $L/K$-twist $C'$ of $C$ by $\iota$ has its Jacobian isomorphic to the $L/K$-twist $A$ of $J(C)$ by $\iota^J$. Furthermore, if $C$ has stable reduction at $v$ then we have
    \[
    \Card G\mid d(C').
    \]
\end{prop}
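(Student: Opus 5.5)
The proof has two parts: first identify the Jacobian of the twist, then bound its stability degree below.

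\textbf{Step 1: the Jacobian of the twist.} This is functoriality of the Jacobian with respect to twisting by descent data. Let $\varphi\colon C_L\to C'_L$ be the isomorphism with $\sigma(\varphi)^{-1}\circ\varphi=\iota(\sigma)$ for all $\sigma\in G$. Applying the Jacobian functor, which commutes with base change and with Galois conjugation, gives an isomorphism $J(\varphi)\colon J(C)_L\to J(C')_L$. It satisfies $\sigma(J(\varphi))^{-1}\circ J(\varphi)=J(\iota(\sigma))=\iota^J(\sigma)$. Here $J(-)$ is covariant via push-forward of line bundles (equivalently, the Albanese functoriality), so the cocycle is preserved and not inverted. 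For stable curves we use the connected component of $\operatorname{Pic}^0$, which is the semi-abelian variety of Proposition~\ref{prop:jacobianstable}. The cocycle describing $J(C')$ as a twist of $J(C)$ is therefore $\iota^J$, so $J(C')\simeq A$ by uniqueness of the twist attached to a cocycle.

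\textbf{Step 2: the divisibility.} By definition, $d(C')=d(J(C'))=d(A)$. Since $C$ has stable reduction at $v$, $J(C)$ has semi-stable reduction at $v$ by Definition~\ref{def:stablered}. Since $\iota^J$ is injective, we may apply Proposition~\ref{prop:twistorus} to $J(C)$ and $\iota^J$. It gives
\[
\Phi_{A,v}=I_v(L/K).
\]
Because $L/K$ is totally ramified at $v$, we have $I_v(L/K)=\Gal(L/K)=G$, so $\Card\Phi_{A,v}=\Card G$. By Theorem~\ref{theo:reducsemistable}, $d(A)$ is the lcm over all places of $\Card\Phi_{A,w}$, so $\Card G\mid d(A)=d(C')$.

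\textbf{Main obstacle.} The delicate point is Step 1, namely making sure that twisting commutes with taking Jacobians for singular stable curves. Twisting a proper curve by a finite Galois descent datum is effective since curves are quasi-projective. $\operatorname{Pic}^0_{C/K}$ is representable and compatible with base change, so the only real check is that the descent datum on $J(C)_L$ induced from $\iota$ is $\iota^J$ and not its inverse or transpose. I would fix this by using the push-forward functoriality throughout. The hypothesis $p\ge 3$ and the injectivity assumption on $\iota^J$ play no role here beyond making Proposition~\ref{prop:twistorus} applicable; the faithfulness on $J(C)$ would be supplied separately, as in Proposition~\ref{prop:Crgenusg}.
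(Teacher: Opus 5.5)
Your proposal is correct and follows essentially the same route as the paper. Both arguments use functoriality of the Jacobian to identify $J(C')$ with the twist of $J(C)$ by $\iota^J$, then apply Proposition~\ref{prop:twistorus} with total ramification ($I_v(L/K)=G$) to get $\Phi_{J(C'),v}\simeq G$, and conclude via Theorem~\ref{theo:reducsemistable}; your explicit cocycle check in Step 1 just spells out what the paper leaves to ``functoriality''.
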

\begin{proof}
    It follows by functoriality of the Jacobian that there is a natural map $t\colon \Aut C\to \Aut J(C)$ for $g\geq 2$. We assume that the induced map $\iota^J$ is injective. 

    \medskip

    By Deligne-Mumford we further have that $C$ and $C'$ have stable reduction at $v$ if and only if their Jacobians have semi-stable reduction at $v$. So that if $C$ has stable reduction at $v$, $J(C)$ has semi-stable reduction at $v$ and the twist $J(C')$ has bad reduction at $v$ with $\Phi_{J(C'),v}\simeq G$ by Proposition~\ref{prop:twistorus}. From the definition we thus get $\Card G\mid d(C')$.
\end{proof}

We now apply this twisting construction to the stable curves given by Proposition~\ref{prop:Crgenusg}.

\begin{prop} \label{prop:exisKCi}
    Let $g\geq 1$ be a natural integer and $p_1, \dots, p_k$ be the odd prime divisors of $M(2g)$. Then there is a number field $K_g$ and stable curves $C_{p_1},\dots, C_{p_k}$ of genus $g$ over $K_g$ such that for all $1\leq i \leq k $ we have 
    \[
    v_{p_i}(M(2g))\leq v_{p_i}(d(C_{p_i})).
    \]
    There is also a place $v$ above $2$ of $K_g$ and curve $C_2$ of genus $g$ over the completion $(K_g)_v$ such that 
    \[
    v_2(M(2g)) \leq v_2(d(C_2)).
    \]
    Furthermore, for any positive integer $M$ we can choose $K_g$ such that each curve has $m$ rational points for some $m\geq M$ and the resulting marked curves have trivial automorphism groups for $M\geq N'_g$. 
\end{prop}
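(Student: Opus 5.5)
For each odd prime $p=p_i$ dividing $M(2g)$, I will start from the stable curve $C^{(p)}$ over $\Q(\mu_p)$ given by Proposition~\ref{prop:Crgenusg}. Its $p$-Sylow $P\subset\Aut C^{(p)}$ satisfies $v_p(\Card P)=v_p(M(2g))$ and acts faithfully on $J(C^{(p)})$. I will descend/extend scalars to a number field $k$ over which all automorphisms in $P$ are defined. Then I enlarge $k$ by a finite extension so that $C^{(p)}$ has stable reduction at all places. This is possible since the Jacobian is semi-abelian and Theorem~\ref{theo:reducsemistable} applies.

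Next I need a Galois extension $L_p/K$ with group $P$ that is totally ramified at some place $v$ above $p$. For this I will use the Grunwald-type argument behind Proposition~\ref{prop:gruntori} (Proposition~3.8 of \cite{Ph22b}). The group $P$ is an iterated wreath product of $\Z/p\Z$ times products of such groups, so generic extensions exist by the same reference. I will realize all the primes $p_1,\dots,p_k$ simultaneously over one common field $K_g$, taking the compositum of the needed base fields and imposing the local conditions at one place above each $p_i$.

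Applying Proposition~\ref{prop:twistcurve} to $\iota\colon\Gal(L_{p}/K_g)\simeq P\hookrightarrow\Aut C^{(p)}$, with $\iota^J$ injective by the faithfulness part of Proposition~\ref{prop:Crgenusg}, gives a twist $C_{p}$ with $\Card P\mid d(C_p)$. This yields $v_p(M(2g))\le v_p(d(C_p))$. For $g=1$ the twisting proposition is stated only for $g\ge2$, so there I would appeal directly to the twist of the Jacobian via Proposition~\ref{prop:twistorus}. Alternatively I would cite \cite{Phi22a}.

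For $p=2$ I will not twist. Instead I take $v$ above $2$ with $(K_g)_v$ containing the $2$-adic field $K$ of paragraph~\ref{subsub:ellcurves}, arranged by choosing $K_g$ dense enough. Proposition~\ref{prop:curvadhoc2} (for $g\ge2$) or Lemma~\ref{lem:curveellipticdeform} (for $g=1$) gives a curve with $\Phi=G_g$, and $v_2(\Card G_g)=3g+v_2(g!)=v_2(M(2g))$. One caveat is that base change to $(K_g)_v$ could shrink inertia. To avoid this I require $K_g$ to be dense at $v$ with the same completion, by Krasner/weak approximation, so that $(K_g)_v=K$ up to unramified extension. The finite monodromy group is preserved under unramified extension.

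Finally, I handle the marked points. Each stable curve has rational points on its components, since the $\PP^1$ components have infinitely many. I enlarge $K_g$ by a further extension, linearly disjoint from the ramification data and unramified at the chosen places, to get at least $M$ rational smooth points. Marking them generically kills automorphisms once $M\ge N'_g$, because $\Aut$ of a stable curve is finite and acts nontrivially on a dense set of points. I expect the main obstacle to be simultaneity: keeping the totally ramified $P$-extensions and the $2$-adic completion intact while passing to the extensions needed for stable reduction and for rational points. I would handle this by choosing all auxiliary extensions unramified or split at the finitely many relevant places.
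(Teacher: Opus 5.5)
Your construction of $C_{p_1},\dots,C_{p_k}$ and $C_2$ is the paper's. You twist the curves of Proposition~\ref{prop:Crgenusg} via Proposition~\ref{prop:twistcurve} along a $P$-extension totally ramified at a place above $p_i$ (the Grunwald argument of Th\'eor\`eme~4.5 of \cite{Ph22b}). You then use weak approximation so that a completion of $K_g$ above $2$ is the field of Proposition~\ref{prop:curvadhoc2}. Your separate treatment of $g=1$ is fine.

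The gap is in the last clause. Generic marks on the $\PP^1$ components do not kill all automorphisms. Take an automorphism of a single leaf fixing its attaching node, such as $[-1]$ on an elliptic leaf or $x\mapsto\zeta x$ on a leaf $C_r$ of Lemma~\ref{lem:crbonnejac}. Extended by the identity, it is a nontrivial automorphism of the whole stable curve fixing every point of every other component. The set of points it moves is dense only in that leaf. So you need a smooth rational mark on \emph{every} leaf.

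This is exactly what the twist forbids. The Galois action on the components of $C_p$ goes through $\iota(P)$. With $n=\lfloor 2g/(p-1)\rfloor$, $P$ must move leaves as soon as $n\geq p$: a $p$-subgroup of $\Aut C$ fixing every leaf fixes every $\PP^1$ pointwise, so it has order at most $p^n<p^{n+v_p(n!)}$. Likewise $P_g$ moves the leaves of $C_2$ for $g\geq 2$. A leaf $Z$ moved by $P$ is defined only over $L^{\mathrm{Stab}(Z)}$. Suppose a field $F\supseteq K_g$, global or local, gives a smooth $F$-point on $Z$. Then the image of $G_F$ in $P$ lies in $\mathrm{Stab}(Z)$. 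By Proposition~\ref{prop:twistorus}, every finite monodromy group of $J(C_p)_F$ embeds in this proper subgroup of the $p$-group $P$, so $v_p(d((C_p)_F))<v_p(M(2g))$. The same argument applies to the descent datum defining $C_2$. For $g=2$, $C_2$ is two elliptic curves exchanged by Galois meeting in one point, so it has no smooth rational point at all.

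Hence no auxiliary extension, unramified at the chosen places or not, gives the required marks while keeping the bound you just proved. The paper instead invokes Moret-Bailly's theorem \cite{MB89} with unramified local conditions at the places above $2,p_1,\dots,p_k$. That does produce points on components stable under $P$, e.g.\ twisted leaves when $n<p$. But it needs geometrically irreducible targets, so it meets the same obstruction for moved leaves. The clause is only used to place the curves on a fine moduli space in Theorem~\ref{theo:dgc}. A clean repair is to drop the markings and work over a smooth base carrying a universal stable curve on which every stable curve is a rational point. An example for genus $g\geq 2$ is the Deligne--Mumford scheme of tricanonically embedded stable curves.
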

\begin{proof}
    To obtain the first part, we argue as in the proof of Théorème~4.5 of \cite{Ph22b} by applying Proposition~\ref{prop:twistcurve} to the curves of Proposition~\ref{prop:Crgenusg}. By the weak approximation result for fields we can also provide a number field, unramified at all relevant primes, for which the curve of Proposition~\ref{prop:curvadhoc2} is over its completion at $2$. 

    \medskip

    For the last part of the statement apply \cite{MB89} enough times by considering incomplete Skolem data over unramified extensions of $K_g$ at all places above $2$ and $p_1,\dots p_k$. 
\end{proof}

\subsubsection{} To conclude we use a universal scheme for curves of genus $g$ with enough marked points and no automorphism. We will make this statement precise. First let us note the following well-known proposition -- see \cite{Knu83}.

\begin{prop} \label{prop:modulischeme}
    Let $g\geq 1$ be a positive integer and $K$ a number field. There is an integer $N_g$ such that for all $m\geq N_g$ the moduli space $\mM_{g,m}$ over $K$ of smooth proper curves of genus $g$ with $m$ marked points is a smooth scheme.   
\end{prop}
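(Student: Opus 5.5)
The plan is to reduce the statement to the standard facts about the Deligne--Mumford--Knudsen moduli stacks $\overline{\mM}_{g,m}$ and $\mM_{g,m}$, together with a rigidification argument once there are enough marked points. First I would recall that $\mM_{g,m}$ is a smooth Deligne--Mumford stack of finite type over $\Spec \Z$, for all $g,m$ with $2g-2+m>0$. It is smooth of relative dimension $3g-3+m$, and smoothness is preserved by base change to $K$. For $g=1$ one needs $m\geq 1$ for stability, so $N_1\geq 1$. The whole content of the proposition is therefore that, for $m$ large, this stack is an algebraic space, and in fact a scheme.

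The key step is to show that for $m\geq N_g$ no smooth $m$-pointed curve of genus $g$ has a non-trivial automorphism. This is the condition under which the stack has trivial inertia and is therefore an algebraic space. I would argue fiberwise over geometric points, with an explicit bound.

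\emph{Case $g\geq 2$.} An automorphism $\sigma\neq \mathrm{id}$ of a smooth curve $C$ of genus $g$ has a finite fixed-point set. By the Lefschetz fixed point formula, or Riemann--Hurwitz in the tame case, one gets $\Card \operatorname{Fix}(\sigma)\leq 2g+2$. In positive characteristic the same bound holds, using the Lefschetz formula with trace bounded by $2g$ on $H^1$. Since a marked automorphism fixes every marked point, $m\geq 2g+3$ forces $\sigma=\mathrm{id}$.

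\emph{Case $g=1$.} A non-trivial automorphism of an elliptic curve fixing the origin has at most $4$ fixed points. Translations have none. So $m\geq 5$ suffices.

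This gives the explicit choice $N_g=2g+3$ for $g\geq 2$ and $N_1=5$, independent of the characteristic.

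Next, a separated Deligne--Mumford stack with trivial automorphism groups at all geometric points is an algebraic space. Here separatedness comes from the stable reduction theorem via the open immersion $\mM_{g,m}\subset\overline{\mM}_{g,m}$. To upgrade from algebraic space to scheme, I would use that $\overline{\mM}_{g,m}$ has a projective coarse moduli space (Knudsen, \cite{Knu83}). When the inertia is trivial, the stack coincides with its coarse space over the open locus $\mM_{g,m}$. That locus is then an open subscheme of a projective scheme, hence quasi-projective. Smoothness transfers from the stack, since the map from a smooth atlas is now étale onto the space.

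I expect the main obstacle to be the uniform fixed-point bound in positive characteristic, particularly wild automorphisms. Even there the Lefschetz number is bounded by $2+2g$, so I would cite that argument rather than Riemann--Hurwitz. For the purposes of the paper, however, we work over a number field $K$, so characteristic $0$ suffices and the Hurwitz argument is enough.
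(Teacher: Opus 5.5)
Your argument is correct, and it follows the only route the paper indicates: the paper gives no proof of this proposition and simply calls it well known, citing Knudsen \cite{Knu83}. Your sketch supplies the standard reasoning behind that citation. The fixed-point bound makes every smooth $m$-pointed curve rigid once $m\geq 2g+3$, or $m\geq 5$ when $g=1$, so the smooth Deligne--Mumford stack $\mM_{g,m}$ is an algebraic space that coincides with an open subscheme of Knudsen's projective coarse space. As a bonus, your argument makes the constant $N_g$ explicit, which the paper leaves unspecified.
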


\begin{theo} \label{theo:dgc}
    Let $g\geq 1$ be an integer. We have the equality
    \[
    d_g^{\mathrm{Cur}} = M(2g)
    \]
\end{theo}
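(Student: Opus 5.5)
The upper bound $d_g^{\mathrm{Cur}}\leq M(2g)$ comes from Jacobians. For a smooth curve $C$ of genus $g$ over a number field $K$, the Deligne--Mumford criterion (\cite{DM72} Theorem~2.5) says that $C_L$ has stable reduction if and only if $J(C)_L$ has semi-stable reduction. Hence $d(C)=d(J(C))\leq d_g(0,g)$. Theorem~\ref{theo:dgta} with $t=0$ gives $d_g(0,g)=M(2g)$, which settles this direction.

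For the lower bound I will produce a single smooth curve $C$ of genus $g$ over a number field with $v_p(d(C))\geq v_p(M(2g))$ for every prime $p\mid M(2g)$.

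\textbf{Step 1 (marked moduli).} Start from Proposition~\ref{prop:exisKCi}. It gives a number field $K_g$ together with stable curves $C_{p_i}$ over $K_g$ for the odd primes, and $C_2$ over $(K_g)_v$ with $v\mid 2$. These achieve the required valuations, and they can be equipped with $m\geq \max(N_g,N'_g)$ rational marked points so that the marked curves have trivial automorphism groups. I then work on a smooth fine moduli space: the open subscheme $S\subset\overline{\mM}_{g,m}$ of automorphism-free stable marked curves. It carries a universal stable curve, and its relative Jacobian $G\to S$ is a semi-abelian scheme whose generic fibre is abelian over the smooth locus $U=\mM_{g,m}$ (Proposition~\ref{prop:modulischeme}). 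Each $C_{p}$ then defines a point $x_p\in S(K_{w})$ for a suitable place $w\mid p$ of $K_g$, and $\Phi_{G_{x_p}}$ has order divisible by $p^{v_p(M(2g))}$.

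\textbf{Step 2 (Krasner).} Apply Theorem~\ref{theo:recouvkrasnersab} to $G\to S$ over each local field $(K_g)_w$. Since $S$ is smooth, $x_p$ lies in $\overline{S^{\mathrm{sm}}(K_w)}$. So there is an open set $U_{i}\subset U(K_w)$ with $x_p\in\overline{U_i}$, and every $s\in U_i$ satisfies $\Phi_{G_s}\simeq H_i\twoheadrightarrow \Phi_{G_{x_p}}$. In particular $p^{v_p(M(2g))}\mid \Card\Phi_{J(C_s),w}$ for every $s$ in the non-empty open set $U_i$.

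\textbf{Step 3 (globalize).} The local data $(U_i)_{w}$ at the finitely many places $w$ above $2,p_1,\dots,p_k$ form an incomplete Skolem datum on the smooth geometrically irreducible $K_g$-variety $\mM_{g,m}$. By Moret-Bailly \cite{MB89}, there is a finite extension $K/K_g$ that is totally split (or at least has local fields isomorphic to $K_w$) at these places, together with a point $s\in\mM_{g,m}(K)$ lying in the prescribed open sets at every place above each $w$. The corresponding smooth curve $C=C_s$ then satisfies $p^{v_p(M(2g))}\mid d(C)$ for all $p$, by Theorem~\ref{theo:reducsemistable}. Therefore $M(2g)\mid d(C)$, and so $d_g^{\mathrm{Cur}}\geq M(2g)$.

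I expect the main obstacle to be Step 1. It requires checking that the monodromy of the degenerate fibre $G_{x_p}$ really is the finite monodromy of the semi-abelian Jacobian $J(C_p)$ computed in Proposition~\ref{prop:twistcurve} and Proposition~\ref{prop:curvadhoc2}; this needs some care because the points are defined only over local fields and the marked points must be $K_w$-rational. It also requires checking that $S$ is smooth near $x_p$. I will handle the second point using the smoothness of $\overline{\mM}_{g,m}$ over the automorphism-free locus. The first point I will handle by noting that adding rational marked points does not change the Jacobian.
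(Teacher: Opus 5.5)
Your proposal is correct and follows essentially the same route as the paper. The upper bound comes from Deligne--Mumford together with $d_g(0,g)=M(2g)$, and the lower bound comes from placing the curves of Proposition~\ref{prop:exisKCi} in the automorphism-free locus of $\overline{\mM}_{g,m}$, applying Theorem~\ref{theo:recouvkrasnersab} at one place per prime, and globalizing with Moret-Bailly's incomplete Skolem datum. One minor imprecision, shared with the paper: the largest open set over which the Jacobian is abelian is the compact-type locus rather than $\mM_{g,m}$, so you should intersect each $U_i$ with $\mM_{g,m}(K_w)$; this stays non-empty and open because a non-empty $w$-adic open subset of a smooth irreducible variety is Zariski dense.
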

\begin{proof}
    Let $K_g$ be the field given by Proposition~\ref{prop:exisKCi} with $M=\max{(N_g,N'_g)}$ the bounds from Proposition~\ref{prop:exisKCi} and \ref{prop:modulischeme}. We then consider the universal stable curve $\overline{\mC}/\overline{\mM}_{g,M_g}$. Let us denote by $\widetilde{\mM}_{g,M_g}$ the largest open subscheme of $\overline{\mM}_{g,M_g}$. It is a fine moduli space for stable curves of genus $g$ with $M_g$ marked points and trivial automorphism group. The pullback of $\overline{\mC}$ provides a universal curve $\widetilde{\mC}/ \widetilde{\mM}_{g,M_g}$. 

    \medskip
    
    Consider the semi-abelian scheme $J(\widetilde{\mC})/ \widetilde{\mM}_{g,M_g}$ given by the Jacobian of the universal curve. Let $p_1,\dots, p_k$ be the odd prime divisors of $M(2g)$. By construction there are stable curves $C_{p_1},\dots ,C_{p_k}$ represented by rational points on $\widetilde{{\mM}}_{g,M_g}$ such that $v_{p_i}(M(2g))\leq v_{p_i}(d(C_{p_i}))$ for $1\leq i\leq k $ and a place $v'_2$ with a curve $C_2\in \widetilde{{\mM}}_{g,M_g}((K_g)_{v'_2})$ such that  $v_2(M(2g))\leq v_2(d(C_2))$. In particular, there are places $v'_2,v'_{p_1},\dots v'_{p_k}\in \Sigma_{K_g}$ such that, for all $i\in \{1,\dots, k\}$,
    \[
    v_{p_i}(M(2g))\leq v_{p_i}( \Card\Phi_{J(C_{p_i}),v_i})
    \]
    and
    \[
    v_2(M(2g)) \leq v_2(\Card \Phi_{J(C_2),v_2}).
    \]

    \medskip

    Let us fix $i\in \{1,\dots, k\}$. By Theorem~\ref{theo:recouvkrasnersab} there is a finite covering $(U_j)_{j\in \{1,\dots r\}}$ of $\mM_{g,M_g}((K_g)_{v_{p_i}})$ and finite groups $H_1,\dots, H_r$ such that 
    \[
    s\in U_j \iff \Phi_{J(\mC)_s,v_i}=H_j.
    \]
    Since $\widetilde{{\mM}}_{g,M_g}((K_g)_{v_i})$ is the closure of $\mM_{g,M_g}((K_g)_{v_i})$, which is smooth, we have that $C_{p_i}\times_{K_g} (K_g)_{v_i}$ is represented by a point $s_i\in \widetilde{\mM}_{g,M_g}((K_g)_{v_i})$ in the closure of $U_j$ for some $j$. It follows by Theorem~\ref{theo:recouvkrasnersab} again that $\Phi_{J(C_{p_i}),v_i}$ is a quotient of $H_j$ and thus 
    \[
    v_{p_i}(\Card H_j)\geq v_{p_i}(M(2g)).
    \]
    Let us denote this open set $U_j$ by $U_{p_i}$.
    Arguing in the same way a similar statement holds for $v_2$. That is, there is a non empty open subset $U_2$ of $\mM_{g,M_g}((K_g)_{v_2})$ such that for all $s\in U_2$ we have
    \[
    v_2(M(2g)) \leq  v_2(\Card \Phi_{J(\mC)_s,v_2}).    
    \]

    \medskip

    The collection of the non empty open subsets $U_2,U_{p_1}, \dots , U_{p_k}$ forms an incomplete Skolem data as given by Moret-Bailly in \cite{MB89}. This datum has a solution, that is there is a finite extension $L/K_g$, unramified at the places $v_2,v_{p_1}, \dots, v_{p_k}$ and such that there are places $w_2\mid v_2$, and $w_{p_i} \mid v_{p_i}$ for $1\leq i \leq k$, and a smooth proper curve $C/L$ of genus $g$ with $M_g$ marked points that verifies
    \[
    v_2(M(2g)) \leq v_2( \Card \Phi_{J(C),w_2})
    \]
    and
    \[
    v_{p_i}(M(2g))\leq v_{p_i}(\Card \Phi_{J(C),w_{p_i}}).
    \]
    It follows that 
    \[
    M(2g) \leq d(C).
    \]

\end{proof}

\printbibliography

\end{document}